\newtheorem{thm}{Theorem}[section]
\newtheorem{lemma}[thm]{Lemma}
\newtheorem{cor}[thm]{Corollary}
\newtheorem{prop}[thm]{Proposition}
\theoremstyle{definition}
\theoremstyle{remark}
\newtheorem{remark}[thm]{Remark}
\numberwithin{equation}{section}
\newenvironment{mylist}{\begin{enumerate}

}{\end{enumerate}}
\newcommand\al{\alpha}
\newcommand\ga{\gamma}
\newcommand\Ga{\Gamma}
\newcommand\de{\delta}
\newcommand\De{\Delta}
\newcommand\ep{\epsilon}
\newcommand\ka{\kappa}
\newcommand\la{\lambda}
\newcommand\La{\Lambda}
\newcommand\si{\sigma}
\renewcommand\th{\theta}
\newcommand\bal{\boldsymbol{\alpha}}
\newcommand\bfeta{\boldsymbol{\eta}}
\newcommand\calS{\mathcal S}
\newcommand\bzero{{\boldsymbol{0}}}
\newcommand\tg{\widetilde g}
\newcommand\tN{{\widetilde N}}
\newcommand\tGa{\widetilde \Gamma}
\newcommand\tla{\widetilde \lambda}
\newcommand\tpsi{\widetilde \psi}
\newcommand\tth{\widetilde \theta}
\newcommand\tw{\widetilde w}
\newcommand\R{\mathbb{R}}
\newcommand\A{{\mathcal A}}
\newcommand\C{{\mathcal C}}
\newcommand\m{{\mathfrak m}}
\newcommand\T{{\mathcal T}}
\let\le=\leqslant
\let\ge= \geqslant
\newcommand\X{\times}
\newcommand{\pa}{\partial}
\newcommand{\rmin}{r_\mathrm{min}}
\newcommand{\rmax}{r_\mathrm{max}}
\newcommand{\cmin}{c_\mathrm{min}}
\newcommand{\cmax}{c_\mathrm{max}}
\newcommand{\e}{\mathrm{e}}
\newcommand{\As}{{\mathcal A}_{a_s}}
\begin{document}

\title[Multi-point problems with variable coefficients]
{Second order, multi-point problems with variable coefficients}
\author{Fran\c cois Genoud}
\author{Bryan P. Rynne}
\address{Department of Mathematics and the Maxwell Institute
for Mathematical Sciences, Heriot-Watt University,
Edinburgh EH14 4AS, Scotland.}
\email{F.Genoud@hw.ac.uk, B.P.Rynne@hw.ac.uk}
\date{6 June 2011 \\
This work was supported by the Engineering and Physical Sciences Research Council [EP/H030514/1].}

\begin{abstract}
In this paper we consider the eigenvalue problem consisting of the
equation
\begin{equation*}
 -u'' = \la r u ,  \quad \text{on $(-1,1)$},
\end{equation*}
where $r \in C^1[-1,1], \ r>0$ and $\la \in \R$,
together with the multi-point boundary conditions
\begin{equation*}
u(\pm 1) = \sum^{m^\pm}_{i=1} \al^\pm_i u(\eta^\pm_i) ,
\end{equation*}
where
$m^\pm \ge 1$ are integers,
and, for
$i = 1,\dots,m^\pm$,
$\al_i^\pm \in \R$,
$\eta_i^\pm \in [-1,1]$,
with $\eta_i^+ \ne 1$, $\eta_i^- \ne -1$.
We show that if the coefficients
$\al_i^\pm \in \R$
are sufficiently small (depending on $r$) then the spectral properties
of this problem are similar to those of the usual separated problem,
but if the coefficients $\al_i^\pm$ are not sufficiently small then
these standard spectral properties need not hold.
The spectral properties of such multi-point problems have been obtained
before for the constant coefficient case ($r \equiv 1$), but the
variable coefficient case has not been considered previously
(apart from the existence of `principal' eigenvalues).

Some nonlinear multi-point problems are also considered.
We obtain a (partial) Rabinowitz-type result on global bifurcation from
the eigenvalues, and various nonresonance conditions for
existence of general solutions and also of nodal solutions --- these
results rely on the spectral properties of the linear problem.
\end{abstract}

\maketitle

\section{Introduction}  \label{intro.sec}

In this paper we consider the eigenvalue problem
consisting of the equation
\begin{equation} \label{eval_de.eq}
 -u'' = \la r u ,  \quad \text{on $(-1,1)$},
\end{equation}
where $r \in C^1[-1,1], \ r>0$ and $\la \in \R$,
together with the multi-point boundary conditions
\begin{equation} \label{dbc.eq}
u(\pm 1) = \sum^{m^\pm}_{i=1} \al^\pm_i u(\eta^\pm_i) ,
\end{equation}
where
$m^\pm \ge 1$ are integers,
and, for
$i = 1,\dots,m^\pm$,
$\al_i^\pm \in \R$,
$\eta_i^\pm \in [-1,1]$,
with $\eta_i^+ \ne 1$, $\eta_i^- \ne -1$.
An {\em eigenvalue} is a number $\la$ for which
\eqref{eval_de.eq}, \eqref{dbc.eq}
has a non-trivial solution $u$ (an {\em eigenfunction}).
The {\em spectrum}, $\si$,  is the set of eigenvalues.
An eigenvalue is termed {\em simple} if its algebraic multiplicity
(defined in Section~\ref{alg_mult.sec}) is equal to $1$.

For any integer $m \ge 1$ and any
$\al = (\al_1,\dots,\al_m) \in \R^m$,
the notation $\al = 0$, $\al > 0$, will mean $\al_i = 0$, $\al_i > 0$,
$i = 1,\dots,m$, respectively,
and we define the norm
\begin{align}  \label{norms.eq}
|\al|	&:=\sum_{i=1}^m |\al_i| , \quad \al \in \R^m .
\end{align}
For the coefficients in \eqref{dbc.eq} we will use the notation
$\al^\pm := (\al_1^\pm,\dots,\al_{m^\pm}^\pm) \in \R^{m^\pm}$,
$\bal := (\al^-,\al^+) \in \R^{m^-} \X \R^{m^+}$,
and similarly for $\eta^\pm$, $\bfeta$;
we also let $\bzero := (0,0) \in \R^{m^-} \X \R^{m^+}$.
For any $\ga > 0$ we define the set
$$
\A_\ga := \{ \bal : |\al^\pm| < \ga \}  .
$$
When $\bal = \bzero$ the boundary conditions \eqref{dbc.eq}
reduce to the standard, separated Dirichlet boundary
conditions  at $x=\pm 1$, so when $\bal \ne \bzero$
we call the conditions \eqref{dbc.eq}
{\em Dirichlet-type} boundary conditions.
For the separated conditions the spectral
properties of the problem are well known, and this case will play a
central role in our
analysis, with the results for  general
$\bal \in \A_\ga$ being obtained by continuation from
$\bal = \bzero$.

We will obtain various properties of the spectrum of the problem
\eqref{eval_de.eq}, \eqref{dbc.eq},
including the existence of eigenvalues, their algebraic multiplicity,
continuity properties, and the positivity of the principal
eigenfunction.
As in the classical Sturm-Liouville theory for separated boundary
conditions, the eigenfunctions will be shown to have certain
`oscillation' properties.
However, in the multi-point case these cannot
be characterised simply by counting nodal zeros of the eigenfunctions,
and instead will be described in terms of certain sets of functions
$T_k^\pm$, $k = 1,2,\dots$
(first introduced in \cite{RYN3}),
which will be defined in Section~\ref{nodal_sets.sec}.
In particular, we will prove the following theorem in
Section~\ref{evals.sec}.

\begin{thm}  \label{spec.thm}
For any $r \in C^1[-1,1], \ r>0$,
there exists $\ga =\ga(r) \in (0,1]$ such that if
$\bal\in \A_\ga$ then the spectrum $\si$ of
\eqref{eval_de.eq}, \eqref{dbc.eq}
consists of a strictly increasing sequence of simple eigenvalues
$\la_k = \la_k(r) > 0$, $k = 1,2,\dots.$
Each eigenvalue $\la_k$ has an eigenfunction $u_k \in T_k^+$.
In addition,
$\lim_{k \to \infty} \la_k = \infty.$
\end{thm}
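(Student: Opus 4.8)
The plan is the continuation strategy flagged in the introduction: settle everything for the separated problem $\bal=\bzero$ and then continue in $\bal$, the only real difficulty being to make a single $\ga=\ga(r)$ serve all indices $k$ simultaneously, which needs estimates uniform as $\la\to\infty$. When $\bal=\bzero$, \eqref{eval_de.eq}, \eqref{dbc.eq} is the regular, self-adjoint Sturm--Liouville problem $-u''=\la ru$, $u(\pm1)=0$; classical theory gives a strictly increasing sequence $0<\la^0_1<\la^0_2<\cdots\to\infty$ of simple eigenvalues whose $k$th eigenfunction $u^0_k$ has exactly $k-1$ zeros in $(-1,1)$, hence lies in $T^+_k$ (by the characterisation of the nodal sets in Section~\ref{nodal_sets.sec}). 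I would then record, for a solution of $-u''=\la ru$ with $\la>0$, its modified Prüfer form $u=\rho\sin\phi$, $u'=\rho\sqrt{\la r}\,\cos\phi$, for which $\rho^2=u^2+(u')^2/(\la r)$ and $\phi'=\sqrt{\la r}+\tfrac14\tfrac{r'}{r}\sin2\phi$. Three facts will be used repeatedly: (i) wherever $\sin\phi=0$ one has $\phi'=\sqrt{\la r}>0$, so $\phi$ passes through multiples of $\pi$ transversally and upward, and $\phi'>0$ throughout $[-1,1]$ as soon as $\la\ge\La_0(r):=(\|r'/r\|_\infty)^2/(16\min_{[-1,1]}r)$; (ii) for the solution $s(\cdot,\la)$ with $s(-1)=0,\ s'(-1)=1$ one has $\phi^s(1,\la)=\sqrt\la\int_{-1}^1\!\sqrt r+O(1)$, and $\la^0_k$ is the value at which $\phi^s(1,\cdot)$ crosses $k\pi$; (iii) a standard energy estimate gives $|c(\cdot,\la)|=O(1)$ and $|s(\cdot,\la)|=O(\la^{-1/2})$ uniformly on $[-1,1]$ as $\la\to\infty$, $c(\cdot,\la)$ being the companion solution with $c(-1)=1,\ c'(-1)=0$.

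No eigenvalue is $\le 0$ once $|\al^\pm|<1$: a solution of $-u''=\la ru$ with $\la\le 0$ and $u\not\equiv0$ has no positive interior maximum and no negative interior minimum, so $M:=\max_{[-1,1]}|u|$ is attained at $+1$ or $-1$, and then \eqref{dbc.eq} gives $M\le\max(|\al^-|,|\al^+|)\,M<M$, a contradiction; in particular one must have $\ga\le1$, and from now on $\la>0$. For $|\al^-|$ small, $1-\sum_i\al^-_i c(\eta^-_i,\la)\ne0$ for all $\la\ge 0$, so the left condition in \eqref{dbc.eq} is solved, uniquely up to scaling, by $u^-(\cdot,\la):=s(\cdot,\la)+\ep^-(\la)\,c(\cdot,\la)$, where $\ep^-(\la):=\big(\sum_i\al^-_i s(\eta^-_i,\la)\big)/\big(1-\sum_i\al^-_i c(\eta^-_i,\la)\big)=O(|\al^-|\la^{-1/2})$. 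Hence $u^-(-1,\la)=\ep^-(\la)$, $(u^-)'(-1,\la)=1$, and the initial Prüfer angle $\th^-(\la)$ of $u^-$ satisfies $\tan\th^-(\la)=\sqrt{\la r(-1)}\,\ep^-(\la)=O(|\al^-|)$ \emph{uniformly in $\la>0$} --- this uniform smallness of the boundary angle is the crucial point. An eigenvalue is then precisely a $\la>0$ at which $u^-(\cdot,\la)$ also satisfies the right condition in \eqref{dbc.eq}.

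Write $u^-=\rho^-\sin\phi^-$ with $\phi^-(-1,\la)=\th^-(\la)$. The Prüfer ODE for $\phi^-$ does not involve $\bal$, so $\phi^-(\cdot,\la)$ stays within $O(|\al^-|)$ of $\phi^s(\cdot,\la)$ uniformly in $\la>0$, and $\partial_\la\phi^-(1,\la)>0$ for every $\la>0$ once $|\al^-|$ is small (from the linear equation satisfied by $\partial_\la\phi^-$, whose forcing term is of order $\la^{-1/2}$ and dominates the $O(|\al^-|\la^{-1/2})$ contribution from $\partial_\la\th^-$). Since $\rho^-(x,\la)/\rho^-(1,\la)$ lies between two positive $r$-dependent constants (from $\rho^2=u^2+(u')^2/(\la r)$ and the energy identity), the right condition of \eqref{dbc.eq} reads $\sin\phi^-(1,\la)=g(\la)$, where $g(\la):=\sum_i\be_i(\la)\sin\phi^-(\eta^+_i,\la)$ with $|\be_i(\la)|\le C(r)|\al^+_i|$, so $\sup_{\la>0}|g|\le C(r)|\al^+|$. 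Fix $\La=\La(r)\ge\La_0(r)$ of the form $\La=\tfrac12(\la^0_N+\la^0_{N+1})$ for the corresponding $N$, so that $\phi^s(1,\La)$ --- and hence $\phi^-(1,\La)$ --- lies strictly between $N\pi$ and $(N+1)\pi$, bounded away from both. On $[\La,\infty)$, $\phi^-(1,\cdot)$ is strictly increasing with derivative bounded below on compacta and $\phi^-(1,\la)\to\infty$, while $|g|$ is uniformly small; a standard Prüfer argument then gives, for $\bal$ below an $r$-dependent threshold, exactly one $\la_k>\La$ solving $\sin\phi^-(1,\la)=g(\la)$ with $\phi^-(1,\la_k)$ near $k\pi$, for each $k>N$, with $\la_{N+1}<\la_{N+2}<\cdots\to\infty$. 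On the compact set $[0,\La]$, the characteristic determinant $D(\la,\bal)$ --- the determinant of the $2\times2$ linear system that \eqref{dbc.eq} imposes on the coefficients of $u$ in the basis $c(\cdot,\la),s(\cdot,\la)$ --- is jointly continuous, with $D(\cdot,\bzero)=s(1,\cdot)$, which has exactly the simple zeros $\la^0_1,\dots,\la^0_N$ in $[0,\La]$ and is nonzero at $0$ and at $\La$; hence for $\bal$ below an $r$-dependent threshold (note $\La$ depends only on $r$) $D(\cdot,\bal)$ has exactly $N$ simple zeros $\la_1<\cdots<\la_N$ in $(0,\La)$, close to the $\la^0_k$. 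Concatenating the two lists and letting $\ga=\ga(r)\in(0,1]$ be the minimum of the finitely many $r$-dependent thresholds produced gives the strictly increasing, simple spectrum $(\la_k)_{k\ge1}$ with $\la_k\to\infty$.

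It remains to record algebraic simplicity and to identify the eigenfunctions. Each $\la_k$ has algebraic multiplicity $1$: for $k\le N$ this is the perturbation of a simple zero of $s(1,\cdot)$; for $k>N$ one uses that $D$ equals a nonvanishing factor times $\rho^-(1,\la)\big(\sin\phi^-(1,\la)-g(\la)\big)$, whose $\la$-derivative at $\la_k$ is dominated by $\rho^-(1,\la_k)\cos\phi^-(1,\la_k)\,\partial_\la\phi^-(1,\la_k)\ne0$ (here $|\cos\phi^-(1,\la_k)|$ is close to $1$), so $D(\cdot,\bal)$ has a simple zero at $\la_k$; by the computation in Section~\ref{alg_mult.sec} a simple zero of the characteristic function means algebraic multiplicity $1$. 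The eigenfunction $u_k:=u^-(\cdot,\la_k)$ has Prüfer angle running from $\th^-(\la_k)\approx0$ at $x=-1$ to a value near $k\pi$ at $x=1$ while meeting the right condition of \eqref{dbc.eq}; for $k\le N$ it is a $C^1[-1,1]$-small perturbation of $u^0_k\in T^+_k$, and for $k>N$ its Prüfer profile is exactly the one defining membership in $T^+_k$ in Section~\ref{nodal_sets.sec}; in either case $u_k\in T^+_k$. The one genuine obstacle throughout is the $k$-uniformity: a bare implicit-function argument from $\bal=\bzero$ yields a $\ga$ that collapses as $k\to\infty$. It is overcome precisely by the decay $|s(\cdot,\la)|=O(\la^{-1/2})$, which makes the perturbation of the boundary Prüfer angle $O(|\al^\pm|)$ uniformly in $\la$, together with the transversal growth $\phi^-(1,\la)\to\infty$ on $[\La(r),\infty)$; the residual low-frequency window $[0,\La(r)]$ is then soft, since $\La$ is already fixed in terms of $r$ alone.
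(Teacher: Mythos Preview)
Your argument is essentially correct and is a genuine alternative to the paper's proof. Both proofs share the same architecture---split into a ``large $\la$'' regime, where uniform energy-type estimates give a single $\bal$-neighbourhood working for all $k$ simultaneously, and a ``low $\la$'' compact window, handled by perturbation from $\bal=\bzero$---but the implementations differ. The paper parametrises solutions by $w(\la,\th)$ with initial data at $x=0$, encodes the two boundary conditions as a pair of scalar equations $\Ga^\pm(\la,\th,\al^\pm)=0$, and runs a two-variable implicit function theorem using sign conditions on $\Ga^\pm_\la\Ga^\pm_\th$ derived from a Lagrange identity; for the finitely many small eigenvalues it switches to a second parametrisation $\tw$ with $\la$-independent initial data. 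You instead shoot from $x=-1$ with the basis $c,s$, first solve the left condition (this is the paper's Theorem~\ref{single_bc.thm}, which you reprove via the nonvanishing of $1-\sum\al^-_i c(\eta^-_i,\la)$), and then reduce the right condition to a single scalar equation $\sin\phi^-(1,\la)=g(\la)$ in modified Pr\"ufer form. Your route makes the nodal count transparent, since $\phi^-$ is monotone and membership in $T_k^+$ is read off directly; the paper's route is more symmetric in the two endpoints and ports more readily to the Neumann-type and $p$-Laplacian settings treated in the cited companion papers.

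Two small points deserve tightening. First, uniqueness of $\la_k$ for $k>N$ requires controlling $\partial_\la g(\la)$, not just $|g(\la)|$; you implicitly use that $\partial_\la\phi^-(\eta^+_i,\la)$ and $\partial_\la\!\big(\rho^-(\eta^+_i,\la)/\rho^-(1,\la)\big)$ are $O(\la^{-1/2})$, so that $\partial_\la g=O(|\al^+|\la^{-1/2})$ is dominated by $\cos\phi^-(1,\la_k)\,\partial_\la\phi^-(1,\la_k)\sim \pm c(r)\la^{-1/2}$---this follows from the same energy bounds but should be stated. Second, your appeal to ``the computation in Section~\ref{alg_mult.sec}'' for algebraic simplicity is off: that section proves $\m(\la_k)=1$ by continuation from $\bal=\bzero$, not by relating simple zeros of $D$ to multiplicity. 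The implication you want is nonetheless true and standard: writing the defect of the right boundary condition on $u^-(\cdot,\la)$ as $D(\la,\bal)/A_{11}(\la)$ with $A_{11}\ne0$, a rank-two generalised eigenfunction would force $\partial_\la$ of this defect to vanish at $\la_k$, contradicting $\partial_\la D(\la_k,\bal)\ne0$.
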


In the constant coefficient case, with $r \equiv 1$,
it was shown in \cite[Theorem 5.1]{RYN5} that Theorem~\ref{spec.thm} is
valid with $\ga = 1$, but can fail if $\ga > 1$.
Here, in the variable coefficient case, it will be shown
that Theorem~\ref{spec.thm} is
valid for `sufficiently small' $\ga(r)$,
but may fail for values of $\ga < 1$.
The spectral properties described in Theorem~\ref{spec.thm} have not
previously been obtained for the variable coefficient problem.
Principal eigenvalues (with positive eigenfunctions) have been discussed
for the variable coefficient case in
\cite{RYN7} and \cite{WL}.

We also consider nonlinear problems of the form
\begin{equation}  \label{nonlin.eq}
 - u'' =f(\cdot,u),  \quad \text{on $(-1,1)$},
\end{equation}
together with the boundary conditions  \eqref{dbc.eq},
where $f \in C^0( [-1,1] \X \R , \R)$.
Under a suitable `nonresonance' condition on $f$ we show that
\eqref{dbc.eq}, \eqref{nonlin.eq} has a solution.
In addition, we consider the special case where equation
\eqref{nonlin.eq} has the form
\begin{equation}  \label{nodal_intro.eq}
 - u'' = g(\cdot,u)u,
\end{equation}
with $g \in C^1( [-1,1] \X \R , \R)$
(the differentiability condition on $g$ is technical and could be
removed under suitable hypotheses,
see Remark~\ref{technical.rem} below).
Since $u \equiv 0$ is a {\em trivial} solution of
\eqref{dbc.eq}, \eqref{nodal_intro.eq},
and the nonresonance result only yields existence of at least one
solution, additional arguments are required to obtain non-trivial
solutions of this problem.
In fact, we will prove the existence of {\em nodal} solutions, that is,
solutions belonging to specific sets $T_k^\pm$.
We do this by studying the bifurcation problem
\begin{equation}  \label{bif.eq}
 - u'' = \la g(\cdot,u)u.
\end{equation}
Under certain hypotheses on $g$ we prove a (partial)
Rabinowitz-type global bifurcation theorem
for the problem \eqref{dbc.eq}, \eqref{bif.eq},
showing that global (unbounded) continua of non-trivial solutions
bifurcate from the eigenvalues of the linearisation of the
bifurcation problem at $u=0$.
Nodal solutions for \eqref{dbc.eq}, \eqref{nodal_intro.eq} will then be
obtained as solutions of \eqref{dbc.eq}, \eqref{bif.eq} with $\la=1$.
This programme will be carried out using the spectral properties
of the linear problems corresponding to the asymptotes of $g$
as $u \to 0$ and $|u| \to \infty$.

In the special case where $g$ has the form
\begin{equation}  \label{product_form.eq}
g(x,u) = r(x) \tg(u),
\end{equation}
with $r \in C^1[-1,1], \ r > 0$,
and $\tg\in C^1(\R)$, we obtain nodal solutions $u \in T_k^\pm$ when
$\tg$ `crosses' an eigenvalue $\la_k=\la_k(r)$.
Such results are well known in other settings,
and similar results for autonomous multi-point problems
(with $g$ independent of $x$) were obtained in \cite{DR,RYN5}.
Nonautonomous multi-point problems similar to
\eqref{dbc.eq}, \eqref{nodal_intro.eq}
(with a separated boundary condition at one end-point)
have been discussed recently and nodal solutions were obtained in,
for example, \cite{CKK,KKW2}
(see also the references therein for other results in the same spirit).
These papers do not have the eigenvalues of the full,
variable coefficient, multi-point problem available, and they obtain
nodal solutions when $\tg$ crosses intervals between consecutive
eigenvalues of a related problem with separated boundary conditions at
both end-points.
We will describe these results further in Section~\ref{nodal.sec}
below, and compare them with our results.


Multi-point problems have received much attention recently.
For instance, in the constant coefficient case some partial results
regarding spectral properties were obtained in \cite{MOR} for a problem
with a separated boundary condition at one end-point and a multi-point
condition at the other end.
Improved results for more general problems were
then obtained in
\cite{DR,RYN3,RYN5,RYN6}.
Once the spectral properties are known they can be used to obtain
nonresonance conditions and nodal solutions in a standard manner.
The variable coefficient case has not been considered to the same
extent.
Principal eigenvalues, with positive principal eigenfunctions, have been
obtained in
\cite{RYN7,WI,WL},
and these have been used to obtain positive solutions of nonlinear
multi-point problems.
As mentioned above, the papers
\cite{CKK,KKW2}
obtain nodal solutions for multi-point problems, but they do so using
the eigenvalues of a related, separated problem, instead of those
of the multi-point problem.
For brevity, we will not discuss the background material any further
here, but simply refer the reader to the review paper \cite{GR} for
more discussion and references.


\subsection{Neumann-type boundary conditions}

If the values  of  $u$ are replaced with the values of the
derivative $u'$ in the conditions \eqref{dbc.eq} we obtain so called
{\em Neumann-type} boundary conditions.
Most of our results can be extended to deal with such conditions
(or a mixture of Dirichlet and Neumann-type conditions).
The only difficulty is that the multi-point operator introduced in
Section~\ref{func_spaces.sec} below is not invertible, since constant
functions lie in its null space.
This can be dealt with using the methods in \cite{RYN6},
which deals with such boundary conditions in the constant coefficient
case, so we will say no more about this here.
The paper \cite{KKW1} deals with a variable coefficient problem
with a separated boundary condition at one end and a Neumann-type
multi-point condition at the other end, using a similar approach  to
that in the papers \cite{CKK,KKW2}
(which deal with a Dirichlet-type condition).


\subsection{More general, nonlocal boundary conditions}

Non-local boundary conditions more general than the above multi-point
conditions have also been considered recently by several authors in
various contexts,
see for example \cite{CKK,WI} and the references therein.
For instance, the Dirichlet-type conditions \eqref{dbc.eq} can be
replaced by integral conditions of the form
\begin{equation}  \label{nonlocal}
u(\pm 1)=\int_{-1}^{1}u(y)\,d\mu_{A^{\pm}}(y),
\end{equation}
where $A^{\pm}$ are functions of bounded variations and the
corresponding measures
$\mu_{A^{\pm}}$ satisfy suitable restrictions of the form
\begin{equation}  \label{nonlocalcond}
\int_{-1}^{1}d|\mu_{A^{\pm}}|<\ga.
\end{equation}
Here, the right-hand side in \eqref{nonlocal} is a Lebesgue-Stieltjes
integral with respect to the signed measure $\mu_{A^{\pm}}$ generated by
$A^{\pm}$,
and in \eqref{nonlocalcond} the term $|\mu_{A^{\pm}}|$ denotes the total
variation of $\mu_{A^{\pm}}$
(we refer the reader to \cite[Section~19]{HS} and \cite[Section~36]{KF}
for the required measure and integration theory).

By choosing  $A^{\pm}$ to be suitable  step functions we see that
the Dirichlet-type boundary conditions
\eqref{dbc.eq} can be regarded as a special case
of the condition \eqref{nonlocal}.
Also, it is clear that \eqref{nonlocalcond} generalizes the condition
$\bal\in \A_{\ga}$
of Theorem~\ref{spec.thm}.

It is explained in \cite[Section~6]{GR}
(for equations involving the $p$-Laplacian, with constant coefficients)
how to generalize the multi-point setting to boundary conditions of the
form \eqref{nonlocal}.
The results of the present paper can be readily extended to such
boundary conditions in a similar manner.
The restriction on the coefficient $\bal$ in
Theorem~\ref{spec.thm} must be replaced by the condition
\eqref{nonlocalcond},
with a suitable $\ga=\ga(r)\in(0,1]$.
Our other results can then be obtained, with the corresponding obvious
modifications of the hypotheses.
We will not discuss this further.


\section{Preliminary results}  \label{prelim.sec}

In this section we will describe various preliminary results that will
be used in the following sections.

\subsection{Function spaces} \label{func_spaces.sec}

For any integer $n \ge 0$, let $C^n[-1,1]$ denote the usual Banach
space
of $n$-times continuously differentiable functions on $[-1,1]$,
with the usual sup-type norm, denoted by $|\cdot|_n$.
A suitable space in which to search for solutions of
\eqref{eval_de.eq},
and which incorporates the boundary conditions \eqref{dbc.eq},
is the space
\begin{align*}
X &:= \{u \in C^2[-1,1] :
\text{$u$ satisfies \eqref{dbc.eq}} \},
\\
\|u\|_X  &:= |u|_2,  \quad  u \in X.
\end{align*}
We also let
$Y:=C^0[-1,1]$, with the norm $\|\cdot \|_Y := |\cdot |_0$.

We define $\De : X \to Y$ by
\[
\De u := u'',  \quad u \in X.
\]
By the definition of the spaces $X$, $Y$, the operator
$\De$ is well-defined and continuous.
The following result is proved in  \cite[Theorem 3.1]{RYN5}.

\begin{thm}  \label{De_inverse.thm}
If $|\al^\pm| < 1$ then the operator
$\De: X \rightarrow Y$ is bijective,
and the inverse operator $\De^{-1} : Y \rightarrow X$
is continuous.
In addition,
$\De^{-1}:Y\rightarrow C^1[-1,1]$ is compact.
\end{thm}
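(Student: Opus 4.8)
The plan is to reduce the multi-point problem to the classical separated Dirichlet problem plus a small perturbation controlled by the condition $|\al^\pm| < 1$. Let $\De_0 : X_0 \to Y$ denote the Laplacian with separated Dirichlet boundary conditions $u(\pm 1) = 0$, where $X_0 := \{u \in C^2[-1,1] : u(\pm 1) = 0\}$; it is classical that $\De_0$ is bijective with continuous inverse $\De_0^{-1}$ given by integration against the Green's function $G(x,y)$ for $-u''$ on $(-1,1)$, and that $\De_0^{-1} : Y \to C^1[-1,1]$ is compact by Arzel\`a--Ascoli (the Green's function and its $x$-derivative are bounded). So I only need to transfer this to the space $X$ incorporating \eqref{dbc.eq}.

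First I would solve the equation $\De u = h$ explicitly. Any $u \in C^2[-1,1]$ with $u'' = h$ can be written as $u = \De_0^{-1} h + \ell$, where $\ell(x) = a + bx$ is the affine function determined by the boundary data; writing things in terms of $u(-1)$ and $u(1)$, there is a unique $u \in C^2[-1,1]$ with $u'' = h$ and prescribed values $u(\pm 1) = c^\pm$, namely $u = \De_0^{-1}h + L(c^-,c^+)$ with $L$ a fixed bounded linear map into the affine functions. Then the boundary condition \eqref{dbc.eq} becomes a linear system for $(c^-,c^+) = (u(-1),u(1))$: substituting $\eta_i^\pm$ into the formula for $u$ gives
\begin{equation*}
c^\pm = \sum_{i=1}^{m^\pm} \al_i^\pm \Big( (\De_0^{-1}h)(\eta_i^\pm) + \big(L(c^-,c^+)\big)(\eta_i^\pm) \Big),
\end{equation*}
i.e. $(I - M)(c^-,c^+) = v(h)$, where $M$ is the $2\times 2$ matrix with entries built from $\sum_i \al_i^\pm L(\cdot)(\eta_i^\pm)$ and $v(h) \in \R^2$ depends linearly and boundedly on $h \in Y$. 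The key estimate is that, since each affine-function evaluation $|L(c^-,c^+)(\eta)| \le \max(|c^-|,|c^+|)$ for $\eta \in [-1,1]$ (affine interpolation does not overshoot), the operator norm of $M$ (in the $\max$-norm on $\R^2$) is at most $\max(|\al^-|,|\al^+|) < 1$. Hence $I - M$ is invertible by Neumann series, $(c^-,c^+)$ is uniquely determined and depends boundedly on $h$, and therefore $u = \De^{-1}h$ exists, is unique, and the map $h \mapsto u$ is bounded from $Y$ to $X = C^2[-1,1]$-with-\eqref{dbc.eq}. This gives bijectivity and continuity of $\De^{-1} : Y \to X$.

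For the final compactness claim, I would observe that $\De^{-1} = \De_0^{-1} + (\text{finite-rank correction})$: concretely $\De^{-1}h = \De_0^{-1}h + L\big((I-M)^{-1}v(h)\big)$, and the second term takes values in the two-dimensional space of affine functions, so it is a bounded finite-rank — hence compact — operator from $Y$ into $C^1[-1,1]$. Since $\De_0^{-1} : Y \to C^1[-1,1]$ is compact, the sum $\De^{-1} : Y \to C^1[-1,1]$ is compact. The main obstacle, and the only place the hypothesis $|\al^\pm| < 1$ is really used, is establishing invertibility of $I - M$ with the clean operator-norm bound; the non-overshoot property of affine interpolation on $[-1,1]$ together with $\eta_i^+ \ne 1$, $\eta_i^- \ne -1$ (so that the relevant evaluations are genuinely interior and the bound is strict) is what makes $\|M\| < 1$ and drives everything else.
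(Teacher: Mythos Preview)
The paper does not give its own proof of this theorem; it simply cites \cite[Theorem~3.1]{RYN5}. Your self-contained argument is correct and is a natural way to establish the result: write a general solution of $u''=h$ as $\De_0^{-1}h$ plus an affine correction $L(c^-,c^+)$, impose \eqref{dbc.eq} to obtain a $2\times 2$ system $(I-M)(c^-,c^+)=v(h)$, and invert $I-M$ via the bound $\|M\|_\infty \le \max(|\al^-|,|\al^+|) < 1$, which follows from the convex-combination estimate $|L(c^-,c^+)(\eta)| \le \max(|c^-|,|c^+|)$ for $\eta\in[-1,1]$. The compactness claim then follows cleanly from your decomposition $\De^{-1} = \De_0^{-1} + (\text{finite rank})$.

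One small correction to your closing remark: the strict inequality $\|M\|<1$ comes entirely from the hypothesis $|\al^\pm|<1$ together with the non-overshoot bound, which holds for \emph{every} $\eta\in[-1,1]$. The side conditions $\eta_i^+\ne 1$, $\eta_i^-\ne -1$ are not needed for this estimate; they serve only to prevent the boundary conditions \eqref{dbc.eq} from degenerating (e.g.\ if $\eta_1^+=1$ the term $\al_1^+ u(1)$ could be absorbed into the left-hand side). So you may drop that parenthetical justification without loss.
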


For $h \in C^0( [-1,1] \X \R , \R)$,
we also define the Nemitskii operator $h : Y \to Y$ by
$h(u)(x) := f(x,u(x)), \ u \in Y$
(we will use the same notation for a function and its associated
Nemitskii operator --- this should cause no confusion).
The operator $h : Y \to Y$
is bounded and continuous.

\subsection{Nodal properties} \label{nodal_sets.sec}

For any $C^1$ function $u$, if $u(x_0)=0$ then $x_0$ is a
{\em simple} zero of $u$ if $u'(x_0) \ne 0$.
Now, for any integer $k \ge 1$ and any $\nu \in \{\pm\}$,
we define
$T_k^\nu \subset X$ to be the set of functions
$u \in X$ satisfying the following conditions:\\[1 ex]
(a) \  $u'(\pm 1) \ne 0$ and $\nu u'(-1) > 0$;\\
(b) \ $u'$ has only simple zeros in $(-1,1)$, and has exactly
$k$ such zeros;\\
(c) \ $u$ has a zero strictly between each consecutive zero of $u'$.
\medskip

\noindent
We also define $T_k:=T_k^+\cup T_k^-$.

\begin{remark} \label{nodal_sets.rem}
The sets $T_k^\pm$ were first introduced in \cite{RYN3} to
characterise the oscillation properties of the eigenfunctions of
multi-point Dirichlet-type problems with constant coefficients.
It was also shown in \cite{RYN3} that the usual method of characterising
the oscillations by counting the nodes of the eigenfunctions may fail
for multi-point problems.
There is a longer discussion of various methods of characterising
the oscillation properties of multi-point problems in
\cite[Section 9.4]{RYN6}.
\end{remark}

\subsection{Solution estimates} \label{soln_ests.sec}

We will now obtain various estimates on solutions of \eqref{eval_de.eq}.
Letting $r'_\pm \ge 0$ denote the positive and negative parts of $r'$,
we define
$$
r_{\min/\max}:=(\min/\max) \, r, \quad
(r'_\pm)_{\min/\max}:=(\min/\max) \, r'_\pm
$$
and
\,
$$
c_{\min} :=
  \min \Big\{ \exp\Big(-2\frac{(r'_\pm)_{\max}}{r_{\min}}\Big) \Big\},
\quad
c_{\max} := \frac{1}{c_{\min}} .
$$

For any $\la > 0$ and  $\th \in \R$ we let
$w(\la,\th) \in C^2(\R)$  denote the solution of
\eqref{eval_de.eq}
satisfying the initial conditions
\begin{equation}  \label{w_soln_form.eq}
w(\la,\th)(0) = \sin \th,
\quad
w(\la,\th)'(0) = (\la r(0))^{1/2} \cos \th.
\end{equation}
Clearly, any solution of \eqref{eval_de.eq} has the form
$u = C w(\la,\th)$, for suitable $C,\,\th \in \R$.
Also, for any solution $u$ of \eqref{eval_de.eq}
we define the Lyapunov function
\begin{equation}\label{lyap}
E(\la,u)(x):=u'(x)^2+\la r(x)u(x)^2, \quad x \in [-1,1],
\end{equation}
and when no confusion is possible we will simply write $E(x)$.
By \eqref{w_soln_form.eq},
\begin{equation} \label{E0.eq}
E(\la,w(\la,\th))(0) = \la r(0) , \quad \th \in [0,2\pi]  .
\end{equation}

\begin{lemma} \label{en_est.lem}
For $\la > 0$, if $u$ is a non-trivial solution of
\eqref{eval_de.eq} then
\begin{equation} \label{E_bnd.eq}
c_{\min} \le  \frac{E(x)}{E(0)}  \le  c_{\max} \quad \text{for all} \ x
\in [-1,1] .
\end{equation}
Hence, for  $\th \in [0,2\pi]$ and $x \in[-1,1]$,
\begin{equation}  \label{u_ud_bounds.eq}
\la r_{\min} c_{\min}  \le  w(\la,\th)'(x)^2+\la r (x)w(\la,\th)(x)^2
\le \la r_{\max} c_{\max}.
\end{equation}
\end{lemma}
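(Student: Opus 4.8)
The plan is to prove the estimate \eqref{E_bnd.eq} by differentiating the Lyapunov function $E(\la,u)$ along a solution $u$ of \eqref{eval_de.eq} and controlling the resulting logarithmic derivative, then to deduce \eqref{u_ud_bounds.eq} by combining \eqref{E_bnd.eq} with \eqref{E0.eq} and the obvious pointwise bounds $r_{\min} \le r(x) \le r_{\max}$.

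First I would compute $E'(x)$. Using $u'' = -\la r u$ we get
\begin{equation*}
E'(x) = 2 u' u'' + \la r' u^2 + 2 \la r u u' = \la r'(x) u(x)^2 .
\end{equation*}
Since, by positivity of $\la r$, we have $\la r(x) u(x)^2 \le E(x)$, it follows that
\begin{equation*}
|E'(x)| \le \frac{|r'(x)|}{r(x)}\, E(x) \le \frac{(r'_\pm)_{\max}}{r_{\min}}\, E(x) ,
\end{equation*}
where I have written $|r'| = r'_+ + r'_-$ and bounded each part by $(r'_\pm)_{\max}$ and $r$ from below by $r_{\min}$. Since $E$ is nonnegative and (for a non-trivial solution) strictly positive everywhere — indeed $E(x) = 0$ would force $u(x) = u'(x) = 0$, hence $u \equiv 0$ by uniqueness for \eqref{eval_de.eq} — we may divide and integrate: for any $x \in [-1,1]$,
\begin{equation*}
\Big| \log \frac{E(x)}{E(0)} \Big| = \Big| \int_0^x \frac{E'(t)}{E(t)}\, dt \Big| \le \frac{(r'_\pm)_{\max}}{r_{\min}} \, |x| \le 2\,\frac{(r'_\pm)_{\max}}{r_{\min}} .
\end{equation*}
Exponentiating gives $c_{\min} \le E(x)/E(0) \le c_{\max}$ with $c_{\min} = \exp(-2(r'_\pm)_{\max}/r_{\min})$ and $c_{\max} = 1/c_{\min}$, which is exactly \eqref{E_bnd.eq}. (The $\min$ in the definition of $c_{\min}$ is harmless: the set has a single element, so $c_{\min}$ is just that exponential.)

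For the second inequality, apply \eqref{E_bnd.eq} to the particular solution $u = w(\la,\th)$; by \eqref{E0.eq} we have $E(\la,w(\la,\th))(0) = \la r(0)$, so
\begin{equation*}
\la r(0)\, c_{\min} \le w(\la,\th)'(x)^2 + \la r(x) w(\la,\th)(x)^2 \le \la r(0)\, c_{\max} .
\end{equation*}
Since $r_{\min} \le r(0) \le r_{\max}$, the displayed quantity is bounded below by $\la r_{\min} c_{\min}$ and above by $\la r_{\max} c_{\max}$, which is \eqref{u_ud_bounds.eq}. I do not anticipate a real obstacle here; the only point requiring a word of care is the non-vanishing of $E$ on $[-1,1]$ for non-trivial $u$, which is needed to make sense of $\log(E(x)/E(0))$ and which follows from the uniqueness theorem for the linear initial value problem \eqref{eval_de.eq}.
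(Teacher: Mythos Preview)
Your proof is correct and follows essentially the same approach as the paper: compute $E' = \la r' u^2$, bound it via $\la r u^2 \le E$ to control the logarithmic derivative of $E$, and integrate; the paper is just terser and writes the two-sided inequality $-\frac{r'_-}{r}E \le E' \le \frac{r'_+}{r}E$ before integrating. One minor correction: the $\pm$ in the definition of $c_{\min}$ indexes two elements (one for $r'_+$, one for $r'_-$), not one, so the $\min$ is genuine --- but this does not affect your argument, since your bound uses the larger of the two in any case.
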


\begin{proof}
From \eqref{eval_de.eq} we obtain
\begin{equation}
 -\frac{r'_-(x)}{r(x)}E(x) \le E'(x)=\la r'(x)u(x)^2 \le
\frac{r'_+(x)}{r(x)}E(x) ,
\label{en_deriv.eq}
\end{equation}
and the result follows by integration.
\end{proof}

\begin{cor}
For $\la > 0$ and $\th \in [0,2\pi]$,
\begin{equation} \label{est_u_ud.eq}
\begin{aligned}
|w(\la,\th)|_0  &\le (r_{\max} c_{\max}/r_{\min})^{1/2} ,
\\
|w(\la,\th)'|_0  &\le  (r_{\max} c_{\max})^{1/2} \la^{1/2} .
\end{aligned}
\end{equation}
\end{cor}

\begin{lemma}\label{int_est.lem}
For
$\la \ge \La_1:= (4r_{\max}c_{\max})^2/r_{\min}^3c_{\min}^2$
and $\th \in [0,2\pi]$,
\begin{equation}  \label{estuu}
\pm\int_0^{\pm1}  w(\la,\th)^2 r  \ge
c_1:=\frac14 r_{\min}c_{\min} > 0.
\end{equation}
\end{lemma}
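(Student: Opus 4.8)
The plan is to use the energy bounds from Lemma~\ref{en_est.lem} to show that $w = w(\la,\th)$ cannot stay too small on too large a portion of $[0,1]$ (and symmetrically on $[-1,0]$), and then to integrate. First I would recall from \eqref{E0.eq} that $E(0) = \la r(0) \ge \la r_{\min}$, so \eqref{E_bnd.eq} gives the uniform lower bound $E(x) = w'(x)^2 + \la r(x) w(x)^2 \ge \la r_{\min} c_{\min}$ for all $x \in [-1,1]$ and all $\th$. Hence at every point either $\la r(x) w(x)^2 \ge \tfrac12 \la r_{\min} c_{\min}$, i.e.\ $r(x) w(x)^2 \ge \tfrac12 r_{\min} c_{\min}$, or else $w'(x)^2 \ge \tfrac12 \la r_{\min} c_{\min}$. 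The idea is that the second alternative forces $|w'|$ to be large, so $w$ changes rapidly and can only be small on a short interval; the threshold $\La_1$ is chosen precisely so that, once $\la \ge \La_1$, the "bad" set where $r w^2 < \tfrac12 r_{\min} c_{\min}$ has length less than, say, $\tfrac12$, leaving a set of length $\ge \tfrac12$ on which $r w^2 \ge \tfrac12 r_{\min} c_{\min}$; integrating over that set yields $\int_0^1 w^2 r \ge \tfrac14 r_{\min} c_{\min} = c_1$.

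To make the "short interval" claim precise, I would argue as follows. On any maximal open subinterval $(a,b) \subset (0,1)$ on which $r w^2 < \tfrac12 r_{\min} c_{\min}$, we have $|w| < \big(\tfrac{r_{\min} c_{\min}}{2 r(x)}\big)^{1/2} \le \big(\tfrac{c_{\min}}{2}\big)^{1/2}$ (using $r \ge r_{\min}$), so in particular $|w|_0$-type control gives $|w| \le (r_{\max} c_{\max}/r_{\min})^{1/2}$ anyway from \eqref{est_u_ud.eq}; the point is that on $(a,b)$ the first energy alternative fails, so $w'(x)^2 \ge \tfrac12 \la r_{\min} c_{\min} - \la r(x) w(x)^2 \ge \tfrac12 \la r_{\min} c_{\min} - \tfrac12 \la r_{\min} c_{\min}$ — this is too crude, so instead I would fix the threshold at $\tfrac14$: say the bad set is where $r w^2 < \tfrac14 r_{\min} c_{\min}$, so that on it $\la r(x) w(x)^2 \le \tfrac14 \la r_{\min} c_{\min}$ and hence $w'(x)^2 \ge \la r_{\min} c_{\min} - \tfrac14 \la r_{\min} c_{\min} = \tfrac34 \la r_{\min} c_{\min}$, giving $|w'(x)| \ge \tfrac12 (\la r_{\min} c_{\min})^{1/2}$ on the bad set. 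On such an interval $|w|$ changes monotonically (since $w'$ has constant sign there, $w'$ being continuous and nonvanishing), so the variation of $w$ across $(a,b)$ is $\ge \tfrac12 (\la r_{\min} c_{\min})^{1/2}(b-a)$; but the total variation of $|w|$ is at most $2|w|_0 \le 2(r_{\max}c_{\max}/r_{\min})^{1/2}$ by \eqref{est_u_ud.eq}. Summing over all such maximal bad intervals, the total length of the bad set is at most $\dfrac{2 \cdot 2(r_{\max}c_{\max}/r_{\min})^{1/2}}{\tfrac12(\la r_{\min}c_{\min})^{1/2}} = \dfrac{8(r_{\max}c_{\max})^{1/2}}{r_{\min} c_{\min}^{1/2} \la^{1/2}}$, and one checks this is $\le \tfrac12$ exactly when $\la \ge \La_1 = (4 r_{\max} c_{\max})^2/(r_{\min}^3 c_{\min}^2)$ (up to reconciling constants with the stated $\La_1$; the numerical factor in $\La_1$ is chosen to make this work). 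Then on the good set, of length $\ge \tfrac12$, we have $r w^2 \ge \tfrac14 r_{\min} c_{\min}$, so $\int_0^1 w^2 r \ge \tfrac12 \cdot \tfrac14 r_{\min} c_{\min} = \tfrac18 r_{\min} c_{\min}$; reconciling with the claimed $c_1 = \tfrac14 r_{\min} c_{\min}$ may require a slightly sharper bookkeeping (e.g.\ taking the bad-set length $\le \tfrac14$, which only changes $\La_1$ by a constant), but the structure of the argument is the same. The case of $\int_{-1}^0 w^2 r$ is identical by symmetry, working on $(-1,0)$ instead.

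The main obstacle I anticipate is bookkeeping the constants so that the threshold comes out exactly as the stated $\La_1$ and the lower bound exactly as the stated $c_1$: the crude estimates above lose factors of $2$ in several places (in the energy splitting threshold, in bounding $|w'|$ from below, in bounding the total variation of $|w|$ by $2|w|_0$, and in the final integration over the good set), so one must choose each threshold optimally, or simply verify that the particular constants in the lemma are consistent with some valid choice. A secondary technical point is justifying that $w'$ has constant sign on each maximal bad interval — this follows because if $w'$ vanished at some interior point $x_0$ of the bad set then $w'(x_0)^2 = 0 < \tfrac34 \la r_{\min} c_{\min}$, contradicting the lower bound on $|w'|$ there; so $w'$ is continuous and nonzero on the bad set, hence of one sign on each connected component. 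Everything else is routine: the energy lower bound is immediate from Lemma~\ref{en_est.lem} and \eqref{E0.eq}, and the $C^0$ bound on $w$ is \eqref{est_u_ud.eq}.
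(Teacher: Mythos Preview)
Your approach is genuinely different from the paper's, and it contains a real gap that cannot be repaired by adjusting constants. The problematic step is the claim ``the total variation of $|w|$ is at most $2|w|_0$.'' This is false: for a $C^1$ function the total variation on $[0,1]$ equals $\int_0^1 |w'|$, and for solutions of \eqref{eval_de.eq} this is of order $\la^{1/2}$, not $O(1)$. Equivalently, your summation over maximal bad intervals tacitly assumes there are only boundedly many of them, but in fact the bad set contains every zero of $w$, and by Sturm oscillation $w$ has on the order of $\la^{1/2}$ zeros in $[0,1]$. Each bad component does have length $O(\la^{-1/2})$ as you argue, but there are $O(\la^{1/2})$ of them, so the total bad length is $O(1)$, not $O(\la^{-1/2})$. (In the constant-coefficient case $w=\sin(\la^{1/2}x)$ with threshold $w^2<\tfrac14$, the bad set has measure tending to $\tfrac13$ as $\la\to\infty$.) Thus no choice of threshold makes the bad-set length tend to zero, and the argument cannot produce a $\la$-threshold of the claimed form.

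The paper's proof avoids any measure-of-level-set reasoning. It multiplies \eqref{eval_de.eq} by $w$ and integrates by parts to get
\[
-w(1)w'(1)+w(0)w'(0)+\int_0^1 (w')^2 = \la\int_0^1 w^2 r,
\]
then uses the pointwise energy lower bound \eqref{u_ud_bounds.eq} in the form $(w')^2 \ge \la r_{\min}c_{\min} - \la r w^2$ to obtain $\int_0^1 (w')^2 \ge \la r_{\min}c_{\min} - \la\int_0^1 w^2 r$. Substituting and bounding the boundary terms via \eqref{est_u_ud.eq} gives
\[
2\la\int_0^1 w^2 r \ge \la r_{\min}c_{\min} - 2 r_{\max}c_{\max}\,r_{\min}^{-1/2}\,\la^{1/2},
\]
from which the stated $\La_1$ and $c_1$ drop out exactly. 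The key point is that the energy bound is used \emph{inside an integral identity} that already relates $\int (w')^2$ to $\int w^2 r$, so the oscillation count never enters.
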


\begin{proof}
We write $w := w(\la,\th)$ and prove the `$+$' case, the other one being
similar.
Multiplying \eqref{eval_de.eq} by $w$ and integrating by parts yields
$$
 -w(1)w'(1)+w(0)w'(0)+  \int_0^1 (w')^2 = \la  \int_0^1 w^2 r   ,
$$
and  by \eqref{u_ud_bounds.eq} we have
$$
\int_0^1 (w')^2  \ge \la r_{\min} c_{\min} - \la \int_0^1 w^2 r  .
$$
Combining these inequalities and using \eqref{est_u_ud.eq} yield
\begin{align*}
2\la  \int_0^1  w^2 r & \ge  -w(1)w'(1)+w(0)w'(0) + \la r_{\min}
c_{\min}\\
& \ge -2|w|_0|w'|_0 + \la r_{\min} c_{\min}\\
& \ge -2r_{\max}c_{\max}r_{\min}^{-1/2}\la^{1/2} + \la r_{\min} c_{\min}
,
\end{align*}
and the claim follows by a straightforward calculation.
\end{proof}

For reference, we now state a simple Lagrange identity that will be used several times below.

\begin{lemma} \label{lag.lem}
Suppose that $u,v,z \in C^2[-1,1]$, $u$ satisfies \eqref{eval_de.eq} and $z$ satisfies
\begin{equation} \label{inhom.eq}
 -z'' = \la r z + r v,  \quad \text{on $(-1,1)$}.
\end{equation}
Then
\begin{align} \label{lag.eq}
\int_0^x u v r  &= \big[u'z-uz'\big]_0^x, \quad x \in[-1,1].
\end{align}
\end{lemma}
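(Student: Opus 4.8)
The plan is to obtain \eqref{lag.eq} by differentiating its right-hand side and then integrating back. Writing $W(x) := u'(x)z(x) - u(x)z'(x)$ for the bilinear concomitant associated with the differential expression in \eqref{eval_de.eq}, I would first compute $W'$ via the product rule: the cross terms $u'z'$ cancel, leaving $W' = u''z - u z''$. Since $u,v,z \in C^2[-1,1]$, the function $W$ is $C^1$ on the whole interval $[-1,1]$, so this computation and the subsequent application of the fundamental theorem of calculus are valid throughout.

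Next I would eliminate the second derivatives using the two hypotheses. From \eqref{eval_de.eq} we have $u'' = -\la r u$, and from \eqref{inhom.eq} we have $z'' = -\la r z - r v$. Substituting these into $W' = u''z - u z''$ gives $W' = -\la r u z + \la r u z + u v r = u v r$, the terms proportional to $\la r u z$ cancelling identically. Integrating this over $[0,x]$ then yields $\int_0^x u v r = W(x) - W(0) = \big[u'z - u z'\big]_0^x$, which is \eqref{lag.eq}.

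There is essentially no obstacle here: this is just the classical Lagrange identity for the Sturm--Liouville expression $w \mapsto -w'' - \la r w$, for which $u$ is a solution of the homogeneous equation and $z$ a solution of the inhomogeneous one with right-hand side $rv$. The only points requiring a little care are the sign bookkeeping when \eqref{inhom.eq} is substituted, and the observation that the identity is asserted for every $x \in [-1,1]$, including $x < 0$, where $\int_0^x$ is interpreted as $-\int_x^0$; this causes no difficulty since the fundamental theorem of calculus applies on all of $[-1,1]$.
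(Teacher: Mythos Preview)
Your proof is correct and is essentially the same as the paper's: the paper says to multiply \eqref{eval_de.eq} by $z$, \eqref{inhom.eq} by $u$, subtract and integrate by parts, which amounts precisely to your computation that $(u'z-uz')' = u''z - uz'' = uvr$ followed by the fundamental theorem of calculus.
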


\begin{proof}
Multiply \eqref{eval_de.eq} by $z$, \eqref{inhom.eq} by $u$, subtract
and integrate by parts.
\end{proof}

We will also need some information regarding the derivatives
$w_\la = w_\la(\la,\th)$ and
$w_\th = w_\th(\la,\th)$.
By
\eqref{eval_de.eq} and \eqref{w_soln_form.eq},
these derivatives satisfy the following initial value
problems:
\begin{alignat}{10}
\label{IVPwla}
 -w_\la'' &=\la r w_\la + r w,
\quad&
w_\la(0) &= 0, & w_\la'(0) &= \tfrac12 \la^{-1/2}
r(0)^{1/2}\cos\th,
\\
\label{IVPwth}
 -w_\th''&=\la r w_\th  ,
&
w_\th(0)&=\cos\th, \quad & w_\th'(0)&=-(\la r(0))^{1/2}\sin \th .
\end{alignat}

\begin{lemma}\label{est_v.lem}
For
$\la \ge \La_2 := \rmin^2/(4r_{\max}^3c_{\max}^2)$
and $\th \in [0,2\pi]$,
\begin{equation}  \label{est_v_vd.eq}
\begin{aligned}
\la^{1/2} |w_\la(\la,\th)|_0 &\le c_2 := 3\rmax^2(c_{\max}/\rmin)^{3/2}
\\
|w_\la(\la,\th)'|_0 &\le c_3 := 3\rmax^2c_{\max}^{3/2}/\rmin .
\end{aligned}
\end{equation}
\end{lemma}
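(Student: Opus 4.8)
The plan is to estimate $w_\la$ via the variation-of-parameters (Duhamel) formula for the inhomogeneous initial value problem \eqref{IVPwla}, using the bounds on $w$ already available from \eqref{est_u_ud.eq}. First I would observe that the homogeneous equation $-\phi'' = \la r\phi$ has the two-dimensional solution space spanned by $w(\la,0)$ and $w(\la,\pi/2)$ (i.e. by the solutions with initial data $(\phi(0),\phi'(0)) = (0,(\la r(0))^{1/2})$ and $(1,0)$), and that the Wronskian $W(x) := w(\la,0)(x)\,w(\la,\pi/2)'(x) - w(\la,0)'(x)\,w(\la,\pi/2)(x)$ is constant in $x$ (since the equation has no first-order term), hence equals its value at $0$, namely $W \equiv -(\la r(0))^{1/2}$. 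Then the solution of \eqref{IVPwla} with $w_\la(0)=0$, $w_\la'(0) = \tfrac12\la^{-1/2}r(0)^{1/2}\cos\th$ can be written explicitly as a combination of these two homogeneous solutions plus the standard Duhamel integral of the forcing term $r w$ against the Green's kernel built from $w(\la,0)$, $w(\la,\pi/2)$ and $W$.

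Next I would bound each piece on $[-1,1]$. The homogeneous part is controlled directly: $|w_\la'(0)| \le \tfrac12\la^{-1/2}r(0)^{1/2} \le \tfrac12\la^{-1/2}\rmax^{1/2}$, and by \eqref{est_u_ud.eq} (applied with appropriate $\th$) each of $|w(\la,0)|_0$, $|w(\la,\pi/2)|_0$, $|w(\la,0)'|_0$, $|w(\la,\pi/2)'|_0$ is bounded by $(r_{\max}c_{\max}/r_{\min})^{1/2}$ respectively $(r_{\max}c_{\max})^{1/2}\la^{1/2}$. For the Duhamel integral, the kernel is a quotient of products of these homogeneous solutions by $W$, so it is bounded in absolute value by $C r_{\max}c_{\max} r_{\min}^{-1}(\la r(0))^{-1/2} \cdot(\text{a }\la^{1/2}\text{ factor from a derivative term})$; integrating the forcing $|r w| \le \rmax (r_{\max}c_{\max}/r_{\min})^{1/2}$ over an interval of length at most $1$ then gives the bound. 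Collecting the powers of $\la$, the dominant contributions to $w_\la$ scale like $\la^{-1/2}$ and those to $w_\la'$ like $\la^0$, which is exactly the form claimed in \eqref{est_v_vd.eq}; the hypothesis $\la \ge \La_2$ is used to absorb any lower-order-in-$\la$ remainder terms into the stated constants $c_2, c_3$ (much as $\La_1$ was used in Lemma \ref{int_est.lem}).

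Alternatively, and perhaps more cleanly, one can introduce the Lyapunov-type quantity $F(x) := w_\la'(x)^2 + \la r(x)\,w_\la(x)^2$ and differentiate: using \eqref{IVPwla}, $F'(x) = \la r'(x) w_\la(x)^2 - 2 r(x) w(x) w_\la'(x)$. The first term is controlled by $(r'_\pm/r) F$ as in \eqref{en_deriv.eq}, and the second by $2\rmax |w|_0\, F^{1/2} \le 2\rmax (r_{\max}c_{\max}/r_{\min})^{1/2} F^{1/2}$ via \eqref{est_u_ud.eq}, giving a differential inequality of the form $|({F^{1/2}})'| \le A F^{1/2} + B$ with $A$ depending only on $r$ and $B = \rmax(r_{\max}c_{\max}/r_{\min})^{1/2}$; since $F(0) = w_\la'(0)^2 = \tfrac14\la^{-1}r(0)\cos^2\th \le \tfrac14\la^{-1}\rmax$, Gronwall's inequality yields $F(x)^{1/2} \le (\tfrac12\la^{-1/2}\rmax^{1/2} + B)e^{A}$ on $[-1,1]$, from which $|w_\la|_0 \le (\la r_{\min}c_{\min})^{-1/2} F^{1/2}$ and $|w_\la'|_0 \le F^{1/2}$ give \eqref{est_v_vd.eq} after a routine computation, again using $\la \ge \La_2$ to tidy the constants. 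The main obstacle in either route is purely bookkeeping: tracking the powers of $\la$ and the explicit $r$-dependent constants carefully enough to land on the precise values $c_2 = 3\rmax^2(c_{\max}/\rmin)^{3/2}$ and $c_3 = 3\rmax^2 c_{\max}^{3/2}/\rmin$ — there is no conceptual difficulty, only the need to be economical with the estimates so the constant $3$ suffices.
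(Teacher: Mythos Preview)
Your first (Duhamel) approach is exactly the paper's: it takes the fundamental pair $\tpsi_0 := w(\la,0)/(\la r(0))^{1/2}$, $\psi_1 := w(\la,\pi/2)$ (normalized so the Wronskian is $-1$), writes out the variation-of-constants formulae for $w_\la$ and $w_\la'$, and bounds each term using \eqref{est_u_ud.eq} together with the analogous bounds $|\tpsi_0|_0 \le (c_{\max}/\rmin)^{1/2}\la^{-1/2}$, $|\tpsi_0'|_0 \le c_{\max}^{1/2}$ obtained from the Lyapunov function for $\tpsi_0$; the threshold $\La_2$ is used precisely as you surmise, to absorb the $O(\la^{-1/2})$ homogeneous contribution into the factor~$3$. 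Your Lyapunov/Gronwall alternative would also work in principle but is not what the paper does, and would make it harder to land on the exact constants $c_2,c_3$.
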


\begin{proof}
Let $\tpsi_0 := w(\la,0)/(\la r(0))^{1/2} $,
$\psi_1 := w(\la,\pi/2)$.
Using the Lyapunov functions associated to $\tpsi_0$ and $\psi_1$
shows that
\begin{equation}  \label{estpsi.eq}
\begin{aligned}
|\tpsi_0|_0 &\le (c_{\max}/\rmin)^{1/2}\la^{-1/2}, &
  |\tpsi_0'|_0 &\le c_{\max}^{1/2}  ,
\\
|\psi_1|_0 &\le (c_{\max}\rmax/\rmin)^{1/2}, &\quad
  |\psi_1'|_0 &\le (c_{\max}\rmax)^{1/2}\la^{1/2} .
\end{aligned}
\end{equation}
The variation of constants formula for the differential
equation in \eqref{IVPwla} yields
\begin{align*}
w_\la(x) &= w_\la'(0)\tpsi_0(x) + \int_0^x
\big[\psi_1(x)\tpsi_0(y)-\psi_1(y)\tpsi_0(x)\big]r(y)w(y) \,dy,
\\
w_\la'(x) &= w_\la'(0)\tpsi_0'(x) + \int_0^x
\big[\psi_1'(x)\tpsi_0(y)-\psi_1(y)\tpsi_0'(x)\big]r(y)w(y)  \,dy,
\end{align*}
so by \eqref{IVPwla},
\begin{align*}
|w_\la|_0 &\le \frac12\rmax^{1/2} \la^{-1/2} |\tpsi_0|_0
 + 2 \rmax |w|_0 |\psi_1|_0 |\tpsi_0|_0 ,
\\
|w_\la'|_0 &\le \frac12\rmax^{1/2} \la^{-1/2} |\tpsi_0'|_0
 + \rmax |w|_0 \big(|\psi_1'|_0 |\tpsi_0|_0 + |\psi_1|_0
|\tpsi_0'|_0\big) ,
\end{align*}
and \eqref{est_v_vd.eq} now follows from
\eqref{est_u_ud.eq} and \eqref{estpsi.eq}.
\end{proof}

\begin{lemma} \label{signla.lem}
If $\la \ge \La_3 := \max \{\frac14c_1^{-2}\rmax,\La_1\}$ and
$w(\pm1)=0$
then\,$:$\\
$(a)$\ $w'(\pm1)w_\th(\pm1)>0;$\\
$(b)$\ $\pm w'(\pm1)w_\la(\pm1)>0$.
\end{lemma}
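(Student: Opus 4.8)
The plan is to use the Lagrange-type identities already established, together with the energy estimates, to pin down the signs of the boundary values of $w_\th$ and $w_\la$ when $w$ vanishes at an endpoint. First I would treat part $(a)$. Since $w_\th$ solves the same homogeneous equation \eqref{eval_de.eq} as $w$ (see \eqref{IVPwth}), the Wronskian-type quantity $W(x) := w'(x)w_\th(x) - w(x)w_\th'(x)$ satisfies $W' = w'' w_\th - w w_\th'' = -\la r w w_\th + \la r w w_\th = 0$, so $W$ is constant on $[-1,1]$. Evaluating at $x=0$ using \eqref{w_soln_form.eq} and the initial data in \eqref{IVPwth} gives $W(0) = (\la r(0))^{1/2}\cos\th \cdot \cos\th + \sin\th \cdot (\la r(0))^{1/2}\sin\th = (\la r(0))^{1/2} > 0$. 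Now if $w(\pm 1) = 0$ then at $x = \pm 1$ the identity reduces to $w'(\pm 1) w_\th(\pm 1) = W(\pm 1) = (\la r(0))^{1/2} > 0$, which is exactly $(a)$. (Note this part requires only $\la > 0$, not the lower bound $\La_3$.)

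For part $(b)$, $w_\la$ solves the \emph{inhomogeneous} equation in \eqref{IVPwla}, so I would apply Lemma~\ref{lag.lem} with $u = w$, $v = w$, $z = w_\la$, giving $\int_0^x w^2 r = [w' w_\la - w w_\la']_0^x$. At $x = 0$ the bracket vanishes since $w_\la(0) = 0$ and, by \eqref{w_soln_form.eq}–\eqref{IVPwla}, $w'(0)w_\la(0) - w(0)w_\la'(0) = -\sin\th\cdot\tfrac12\la^{-1/2}r(0)^{1/2}\cos\th$ — actually this is nonzero in general, so more care is needed: the bracket at $0$ equals $-\tfrac12\la^{-1/2}r(0)^{1/2}\sin\th\cos\th$. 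Hence at $x = \pm 1$, using $w(\pm 1) = 0$,
\begin{equation*}
\pm w'(\pm 1) w_\la(\pm 1) = \pm\int_0^{\pm 1} w^2 r \; \pm\; \tfrac12\la^{-1/2}r(0)^{1/2}\sin\th\cos\th\cdot(\mp 1),
\end{equation*}
so $\pm w'(\pm 1)w_\la(\pm 1) = \pm\int_0^{\pm 1} w^2 r - \tfrac12\la^{-1/2}r(0)^{1/2}\sin\th\cos\th$ in both cases. The first term is bounded below by $c_1 = \tfrac14 r_{\min}c_{\min} > 0$ by Lemma~\ref{int_est.lem} (valid since $\la \ge \La_1$), while the error term is bounded in absolute value by $\tfrac14\la^{-1/2}r(0)^{1/2} \le \tfrac14\la^{-1/2}r_{\max}^{1/2}$. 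Requiring $\tfrac14\la^{-1/2}r_{\max}^{1/2} < c_1$, i.e. $\la > \tfrac1{16}c_1^{-2}r_{\max}$, makes the right-hand side strictly positive; this is why $\La_3$ includes a term of the form $\tfrac14 c_1^{-2}r_{\max}$ (a slightly generous constant absorbing the $\tfrac1{16}$).

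I do not expect a serious obstacle here: both parts are essentially Wronskian/Lagrange-identity computations combined with already-proven estimates. The only point demanding attention is bookkeeping the $\sin\th\cos\th$ cross-term in part $(b)$ and checking that the threshold $\La_3$ as defined genuinely dominates both $\La_1$ (needed for Lemma~\ref{int_est.lem}) and the size needed to beat the error term; one should also verify the sign conventions in the bracket $[w'w_\la - w w_\la']_0^{\pm 1}$ carefully for the `$-$' endpoint, where the orientation of the integral flips. Everything else is routine.
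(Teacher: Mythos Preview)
Your proposal is correct and follows essentially the same approach as the paper: it derives the identities $w'(\pm1)w_\th(\pm1)=(\la r(0))^{1/2}$ and $w'(\pm1)w_\la(\pm1)=\int_0^{\pm1} r w^2 - \tfrac12\la^{-1/2}r(0)^{1/2}\sin\th\cos\th$ via the Wronskian/Lagrange identity, then invokes Lemma~\ref{int_est.lem} to control the sign in~(b). Your caution about the sign at the `$-$' endpoint is warranted (the cross-term actually changes sign there, so your displayed ``in both cases'' is off), but since you bound that term in absolute value this has no effect on the argument.
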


\begin{proof}
Combining  \eqref{IVPwla} and \eqref{IVPwth} with
Lemma~\ref{lag.lem} yields
\begin{equation} \label{wwth}
w'(\pm1)w_\th(\pm1)=(\la r(0))^{1/2}
\end{equation}
and
\begin{equation} \label{wwla}
w'(\pm1)w_\la(\pm1)=\int_0^{\pm1} r w^2
 - \frac12 \la^{-1/2} r(0)^{1/2}\sin \th\cos\th.
\end{equation}
Hence (a) follows immediately (for all $\la>0$),
while (b) follows from
Lemma~\ref{int_est.lem}.
\end{proof}

\subsection{The constant coefficient case}
To illustrate the above estimates, let us briefly consider the case $r
\equiv1$.
For $w=w(\la,0)$, we have
$$
w(x) = \sin(\la^{1/2}x), \quad w'(x) = \la^{1/2}\cos(\la^{1/2}x),
$$
$$
w_\la(x)=\frac12  \la^{-1/2} x \cos(\la^{1/2}x)
$$
and
$$
w_\la'(x)=\frac12\big[\la^{-1/2}\cos(\la^{1/2}x)-x
\sin(\la^{1/2}x)\big].
$$
In this case, the Lyapunov function $E \equiv 1$, and all the
above estimates hold with $c_{\min/\max}=1$.
We particularly want to emphasize here that a lower bound of the form
\eqref{estuu} cannot hold uniformly for all $\la>0$ since
$$
\pm\int_0^{\pm1} w^2  =\frac12
\left(1-\frac{\sin(2\la^{1/2})}{2\la^{1/2}}\right).
$$

\subsection{Notation}

In principle, the coefficients $\al^\pm$ are $\eta^\pm$ are fixed,
but many of the proofs will be by continuation with respect to the
coefficients $\al^\pm$ so we will often explicitly display the
dependence of various functions on these coefficients.
Of course, most of the functions we introduce also depend on the
coefficients $\eta^\pm$, but we will usually
regard these coefficients as fixed and omit them from the notation.
In particular, it will be important to observe that all estimates we
obtain will be uniform with respect to $\bal$,
as $\bal$ varies over the set
$ \{ \bal : |\al^\pm| < 1 \} . $

Also, even though the results of Theorem~\ref{spec.thm} depend on $r$,
we will generally consider $r$ to be fixed
in Sections~\ref{single_bc.sec} and \ref{evals.sec},
so, except where necessary, we will not explicitly display the
dependence on $r$.


\section{Solutions with a single boundary condition}
\label{single_bc.sec}

In this section we consider the problem
\begin{gather}
 - u'' = \la r u ,  \quad \text{on $(-1,1)$},
\label{single_bc_de.eq}
\\
u(\eta_0) =  \sum^{m}_{i=1} \al_i u(\eta_i) ,
\label{single_bc.eq}
\end{gather}
with a single, multi-point boundary condition, and fixed $\la > 0$,
$m \ge 1$, $\al \in \R^m$,
$\eta_0 \in [-1,1]$ and $\eta \in [-1,1]^m$.
We will show that the set of solutions of
\eqref{single_bc_de.eq}, \eqref{single_bc.eq},
is 1-dimensional.
This result will show that the eigenspace corresponding to an
eigenvalue is 1-dimensional.
For separated boundary value problems this 1-dimensionality is a
consequence of the uniqueness of the solutions of the initial value
problem with a single point initial condition.
Theorem~\ref{single_bc.thm} below can be regarded as an analogue
of this result for the multi-point `initial condition'
\eqref{single_bc.eq}.

For the following results, it will be convenient to regard
$\la$ as fixed, and $\al$ as variable,
so we will omit $\la$ from the notation and include $\al$.
Also, we recall that any solution $u$ of \eqref{single_bc_de.eq}
must have the form $u = C w(\th)$,
for suitable $C,\,\th \in \R$.

We first prove a preliminary lemma which will also be useful later.

\begin{lemma}  \label{single_bc_u_simple.lem}
If $|\al| < a_1:=(\rmin \cmin/\rmax \cmax)^{1/2}$
and $u$ is a non-trivial solution of
\eqref{single_bc_de.eq}, \eqref{single_bc.eq},
then
$u'(\eta_0) \ne 0. $
\end{lemma}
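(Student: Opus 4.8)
The plan is to argue by contradiction: suppose $u$ is a non-trivial solution of \eqref{single_bc_de.eq}, \eqref{single_bc.eq} with $u'(\eta_0) = 0$. Writing $u = C w(\th)$ for suitable $C \ne 0$ and $\th \in \R$, the condition $u'(\eta_0) = 0$ together with the energy identity \eqref{u_ud_bounds.eq} forces the value $|u(\eta_0)|$ to be relatively large; precisely, at any point $x$ where $u'(x) = 0$ we have $\la r(x) u(x)^2 = E(x) \ge \la r_{\min} c_{\min} E(0)/(\la r(0))$, while at every other point $\la r(x) u(x)^2 \le E(x) \le \la r_{\max} c_{\max} E(0)/(\la r(0))$. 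Hence for each $i = 1,\dots,m$,
\begin{equation*}
\frac{|u(\eta_i)|}{|u(\eta_0)|} \le \Big(\frac{r_{\max} c_{\max}}{r_{\min} c_{\min}}\Big)^{1/2} \cdot \frac{r(\eta_0)^{1/2}}{r(\eta_i)^{1/2}} \cdot \frac{r(\eta_i)^{1/2}}{r(\eta_0)^{1/2}} \,,
\end{equation*}
so after cancelling the stray $r$-factors one gets $|u(\eta_i)| \le (r_{\max} c_{\max}/r_{\min} c_{\min})^{1/2} |u(\eta_0)|$ for every $i$; note that since $u'(\eta_0)=0$ the point $\eta_0$ behaves like a turning point, which is exactly what makes $|u(\eta_0)|$ the "large" value.

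Now apply the boundary condition \eqref{single_bc.eq} and the triangle inequality:
\begin{equation*}
|u(\eta_0)| \le \sum_{i=1}^m |\al_i|\,|u(\eta_i)| \le |\al|\, \Big(\frac{r_{\max} c_{\max}}{r_{\min} c_{\min}}\Big)^{1/2} |u(\eta_0)| = \frac{|\al|}{a_1}\,|u(\eta_0)| \,.
\end{equation*}
Since $u$ is non-trivial and $\eta_0$ is a turning point, $u(\eta_0) \ne 0$ (if $u(\eta_0) = 0$ as well then $E(\eta_0) = 0$, contradicting $E(0) > 0$ via \eqref{E_bnd.eq}); dividing through by $|u(\eta_0)| > 0$ yields $1 \le |\al|/a_1$, i.e. $|\al| \ge a_1$, contradicting the hypothesis $|\al| < a_1$. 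This completes the argument.

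The main obstacle — really the only delicate point — is bookkeeping the $r$-weights correctly so that the bound $|u(\eta_i)| \le a_1^{-1} |u(\eta_0)|$ comes out with precisely the constant $a_1 = (r_{\min} c_{\min}/r_{\max} c_{\max})^{1/2}$ rather than something weaker; the cleanest way to handle this is to phrase everything in terms of the quantity $\la r(x) u(x)^2$ (which is $\le E(x)$ always, and $= E(x)$ exactly when $u'(x)=0$) and then invoke \eqref{u_ud_bounds.eq} applied to $w(\la,\th)$, which already packages the $r_{\min} c_{\min} \le \cdots \le r_{\max} c_{\max}$ comparison with the correct normalisation $E(\la,w)(0) = \la r(0)$. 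One should also take a moment to record that $u(\eta_0) \ne 0$, since the displayed division requires it; this follows because $u'(\eta_0) = 0$ and $u(\eta_0) = 0$ would force $E(\eta_0) = 0$, hence $E \equiv 0$ by Lemma~\ref{en_est.lem}, hence $u \equiv 0$, contrary to assumption.
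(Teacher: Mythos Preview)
Your argument is correct and follows essentially the same route as the paper's proof: assume $u'(\eta_0)=0$, use the Lyapunov bounds \eqref{E_bnd.eq} (equivalently \eqref{u_ud_bounds.eq}) to compare $|u(\eta_0)|$ with $|u(\eta_i)|$, then feed this into the boundary condition \eqref{single_bc.eq} to force $|\al|\ge a_1$. The only cosmetic difference is that the paper keeps the point-dependent weights $r(\eta_0),\,r(\eta_i)$ in the intermediate inequalities and bounds them only at the last step, whereas your displayed ratio with the self-cancelling factors $r(\eta_0)^{1/2}/r(\eta_i)^{1/2}\cdot r(\eta_i)^{1/2}/r(\eta_0)^{1/2}$ is a bit muddled as written---but after the cancellation you state the correct bound $|u(\eta_i)|\le a_1^{-1}|u(\eta_0)|$, and the rest is identical.
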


\begin{proof}
Suppose that $u'(\eta_0) = 0. $
Since $u = C w(\th)$, for some $C,\,\th \in \R$,
$w(\th)'(\eta_0)=0$ and
it follows by \eqref{E0.eq}-\eqref{E_bnd.eq} that
\begin{align*}
|w(\th)(\eta_0)| &=
\left( \frac{E(w(\th))(\eta_0)}{\la r (\eta_0)}\right)^{1/2}
\ge c_{\min}^{1/2} \left( \frac{r(0)}{r (\eta_0)} \right)^{1/2},
\\
|w(\th)(\eta_i)| &
\le \left(\frac{E(w(\th))(\eta_i)}{\la r(\eta_i)}\right)^{1/2}
\le c_{\max}^{1/2} \left(\frac{r(0)}{r (\eta_i)}\right)^{1/2},
\quad i=1,\dots,m.
\end{align*}
Hence, by \eqref{single_bc.eq},
$$
c_{\min}^{1/2} \left( \frac{r(0)}{r (\eta_0)} \right)^{1/2}
\le |w(\th)(\eta_0)| \le \sum^{m}_{i=1} |\al_i|  |w(\th)(\eta_i)|
\le c_{\max}^{1/2} \sum^{m}_{i=1} |\al_i|
\left(\frac{r(0)}{r(\eta_i)}\right)^{1/2},
$$
which yields
$$
\sum^{m}_{i=1} |\al_i| \left(\frac{r(\eta_0)}{r(\eta_i)}\right)^{1/2}
\ge \left(\frac{c_{\min}}{c_{\max}}\right)^{1/2}.
$$
But this contradicts the assumption that $|\al|<a_1$,
and so completes the proof.
\end{proof}

We can now prove the main result of this section.

\begin{thm}  \label{single_bc.thm}
If $|\al| < a_1$ then the set of solutions of
\eqref{single_bc_de.eq}, \eqref{single_bc.eq},
is 1-dimensional.
\end{thm}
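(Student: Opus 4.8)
The plan is to show that the solution set, which is a vector subspace of the two-dimensional space $\{C w(\la,\th) : C \in \R,\ \th \in \R\}$ of all solutions of \eqref{single_bc_de.eq}, is neither $\{0\}$ nor the whole two-dimensional space, hence exactly one-dimensional. Showing it is nonzero is elementary: a solution $w(\la,\th)$ satisfies the multi-point condition \eqref{single_bc.eq} precisely when
\[
g(\th) := w(\la,\th)(\eta_0) - \sum_{i=1}^m \al_i\, w(\la,\th)(\eta_i) = 0 ,
\]
and since $w(\la,\cdot)$ depends continuously (indeed smoothly) on $\th$ with $w(\la,\th+\pi) = -w(\la,\th)$, the function $g$ is continuous and odd-periodic, so $g(\th_*) = 0$ for some $\th_*$; thus a non-trivial solution exists. (Alternatively, one can phrase \eqref{single_bc.eq} as one linear equation in the two unknowns $C,\th$ after fixing a parametrization, but the oscillation argument is cleaner.)

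The substance is to rule out the solution set being two-dimensional, i.e. to show that $w(\la,\th)$ cannot satisfy \eqref{single_bc.eq} for \emph{two} linearly independent choices; equivalently, I would show that the set of $\th \in [0,\pi)$ for which $g(\th) = 0$ is a singleton. Suppose $w(\la,\th_1)$ and $w(\la,\th_2)$ are two linearly independent solutions of \eqref{single_bc_de.eq}, \eqref{single_bc.eq}. Then every solution of \eqref{single_bc_de.eq} is a linear combination of them, hence satisfies \eqref{single_bc.eq}; in particular the solution $u$ with $u(\eta_0) = 0$, $u'(\eta_0) = 1$ (which exists and is non-trivial by uniqueness for the point initial value problem) satisfies \eqref{single_bc.eq}. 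But $u'(\eta_0) = 1 \ne 0$, which directly contradicts Lemma~\ref{single_bc_u_simple.lem} under the hypothesis $|\al| < a_1$. Therefore the solution set cannot contain two independent elements, and combined with the previous paragraph it is exactly one-dimensional.

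I expect the main (minor) obstacle to be a clean, rigorous justification that \eqref{single_bc.eq} is preserved under taking linear combinations of solutions — but this is immediate because \eqref{single_bc.eq} is a linear functional condition on $u$ (the map $u \mapsto u(\eta_0) - \sum \al_i u(\eta_i)$ is linear on $C^2[-1,1]$), so its kernel within the solution space of \eqref{single_bc_de.eq} is automatically a subspace. The only real content is thus Lemma~\ref{single_bc_u_simple.lem}, which has already been established, so the proof reduces to: (i) the solution space of \eqref{single_bc_de.eq} is two-dimensional; (ii) the subspace cut out by \eqref{single_bc.eq} is nonzero by the oscillation/continuity argument; (iii) it is not the whole space, since that would force a solution with vanishing derivative at $\eta_0$ to satisfy \eqref{single_bc.eq}, contradicting Lemma~\ref{single_bc_u_simple.lem}; hence it is one-dimensional.
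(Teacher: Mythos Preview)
Your overall strategy is sound and, once corrected, gives a cleaner argument than the paper's. But there is a genuine slip in step (iii): you chose the wrong initial conditions. You take $u$ with $u(\eta_0)=0$, $u'(\eta_0)=1$, and then claim that ``$u'(\eta_0)=1\ne 0$ directly contradicts Lemma~\ref{single_bc_u_simple.lem}''. It does not: that lemma asserts precisely that $u'(\eta_0)\ne 0$ for any non-trivial solution of \eqref{single_bc_de.eq}, \eqref{single_bc.eq}, so your $u$ is \emph{consistent} with the lemma, not in conflict with it. The initial data are simply swapped. Take instead the solution $v$ with $v(\eta_0)=1$, $v'(\eta_0)=0$; this $v$ is non-trivial, and if the solution set were two-dimensional then $v$ would satisfy \eqref{single_bc.eq}, whereupon $v'(\eta_0)=0$ genuinely contradicts Lemma~\ref{single_bc_u_simple.lem}.

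With that fix, your argument is correct and takes a different, more elementary route than the paper. The paper introduces $\Ga(\th,\al)=w(\th)(\eta_0)-\sum_i\al_i w(\th)(\eta_i)$, proves (via a rotation-matrix identity reducing again to Lemma~\ref{single_bc_u_simple.lem}) that every zero of $\Ga(\cdot,\al)$ in $\th$ is simple, and then continues from $\al=0$---where $\Ga(\cdot,0)=w(\cdot)(\eta_0)$ has exactly one zero in $[0,\pi)$---to conclude that $\Ga(\cdot,\al)$ still has exactly one zero for $|\al|<a_1$. Your linear-algebra argument bypasses both the simplicity-of-zeros lemma and the continuation step: \eqref{single_bc.eq} is a single linear functional on a two-dimensional solution space, so its kernel has dimension at least one (your IVT argument for this is fine, though the rank--nullity observation is even shorter), and dimension at most one by the corrected appeal to Lemma~\ref{single_bc_u_simple.lem}. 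The paper's approach has the advantage that the auxiliary Lemma~\ref{single_bc_simple.lem} (``$\Ga=0\Rightarrow\Ga_\th\ne 0$'') is reused later in Section~\ref{evals.sec}; your approach is shorter and self-contained for the theorem at hand.
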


\begin{proof}

Since any solution of \eqref{single_bc_de.eq} has
the form $u = C w(\th)$ for suitable $C,\,\th \in \R$,
we see that $u$ satisfies \eqref{single_bc.eq} if and only if
\begin{equation}  \label{single_bc_zeros.eq}
\Ga(\th,\al)  :=
w(\th)(\eta_0) - \sum^{m}_{i=1} \al_i w(\th)(\eta_i)
= 0 , \quad  \th \in \R .
\end{equation}
The function $\Ga : \R \X \R^m  \to \R$ is $C^1$,
and we will denote the partial derivative of $\Ga$ with respect to $\th$
by $\Ga_\th$.

\begin{lemma}  \label{single_bc_simple.lem}
If $|\al| < a_1$ then
$$
\Ga(\th,\al) = 0 \implies \Ga_\th(\th,\al) \ne 0 ,
\quad \th \in \R .
$$
\end{lemma}

\begin{proof}
Suppose that $\Ga(\th,\al) = \Ga_\th(\th,\al) = 0$,
for some $(\th,\al) \in \R \X \R^m$.
Defining a linear  functional $B_\al : C^0[-1,1]  \to \R$ by
$$
B_\al z := z(\eta_0) - \sum^{m}_{i=1} \al_i z(\eta_i),
\quad z \in C^0[-1,1] ,
$$
we see that
\begin{equation*}
\begin{pmatrix}
0 \\ 0
\end{pmatrix}
=
\begin{pmatrix}
B_\al(w(\th)) \\ B_\al(w_\th(\th))
\end{pmatrix}
=
\begin{pmatrix}
\cos \th & \sin \th
\\
 - \sin \th & \cos \th
\end{pmatrix}
\begin{pmatrix}
B_\al(w(0)) \\ B_\al(w(\pi/2))
\end{pmatrix} ,
\end{equation*}
so $B_\al(w(0)) = B_\al(w(\pi/2)) = 0$.
Thus, there exists $\th_0$ such that
$w(\th_0)'(\eta_0) = 0 $ and $B_\al(w(\th_0)) = 0 $.
However, this contradicts Lemma~\ref{single_bc_u_simple.lem},
which proves Lemma~\ref{single_bc_simple.lem}. \end{proof}

Now, by definition, $\Ga(\cdot,0) = w(\cdot)(\eta_0)$ has exactly 1 zero
in
$[0,\pi)$, and so it follows from Lemma~\ref{single_bc_simple.lem} and
continuity that
$\Ga(\cdot,\al)$ has exactly 1 zero in $[0,\pi)$,
for all $\al$ with $|\al| < a_1$
(of course, by periodicity and linearity,
there are other zeros outside $[0,\pi)$,
but these do not yield distinct solutions of
\eqref{single_bc_zeros.eq}).
This proves Theorem~\ref{single_bc.thm}.
\end{proof}


\section{Eigenvalues}  \label{evals.sec}

We now consider the eigenvalue problem
\eqref{eval_de.eq}, \eqref{dbc.eq},
which can be rewritten as
\begin{equation}  \label{mp_eval.eq}
 -\De(u)  = \la r u , \quad u \in X .
\end{equation}
In the following subsections we will prove various properties of the
eigenvalues and eigenfunctions, including Theorem~\ref{spec.thm}.

\subsection{Proof of Theorem~\ref{spec.thm}}  \label{proof_spec_thm.sec}

We  first establish some useful properties of solutions.
Let $(\la,u)$ be a nontrivial solution of \eqref{mp_eval.eq},
with $|\al^\pm| < 1$.

\begin{lemma}  \label{max_e_pt.lem}
 $|u(\pm 1)| \le |\al^\pm| |u|_0 <  |u|_0$.
\end{lemma}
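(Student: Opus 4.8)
The plan is to prove the two inequalities in Lemma~\ref{max_e_pt.lem} separately, starting from the boundary conditions \eqref{dbc.eq} themselves. For the first inequality, $|u(\pm1)| \le |\al^\pm|\,|u|_0$, I would simply take absolute values in \eqref{dbc.eq}: since $u(\pm1) = \sum_{i=1}^{m^\pm}\al_i^\pm u(\eta_i^\pm)$, the triangle inequality gives
\[
|u(\pm1)| \le \sum_{i=1}^{m^\pm}|\al_i^\pm|\,|u(\eta_i^\pm)| \le \Big(\sum_{i=1}^{m^\pm}|\al_i^\pm|\Big)|u|_0 = |\al^\pm|\,|u|_0,
\]
using the definition \eqref{norms.eq} of $|\al^\pm|$ and the fact that each $|u(\eta_i^\pm)| \le |u|_0$ since the points $\eta_i^\pm$ lie in $[-1,1]$.

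For the strict inequality $|\al^\pm|\,|u|_0 < |u|_0$, the point is that $|\al^\pm| < 1$ by hypothesis, so this reduces to verifying $|u|_0 > 0$, i.e. that $u$ is not identically zero — which holds because $(\la,u)$ is assumed to be a \emph{nontrivial} solution of \eqref{mp_eval.eq}. The only subtlety is the degenerate possibility $|\al^\pm| = 0$; but even then $|\al^\pm|\,|u|_0 = 0 < |u|_0$, so the strict inequality still holds. (Alternatively, one notes $|\al^\pm|\,|u|_0 \le |\al^\pm|\,|u|_0 < |u|_0$ as long as $|u|_0 \ne 0$, regardless of whether $|\al^\pm|$ vanishes.)

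Honestly, there is no real obstacle here: the lemma is a direct consequence of the triangle inequality applied to the boundary condition, the normalization $|\al^\pm|<1$, and nontriviality of $u$. The one thing worth stating carefully is why $u \not\equiv 0$ guarantees $|u|_0 > 0$ — this is immediate since $|u|_0 = |u|_C$ is a norm — and why the sup over $[-1,1]$ dominates the values at the interior points $\eta_i^\pm$, which again is immediate since $\eta_i^\pm \in [-1,1]$. The whole argument is two or three lines.
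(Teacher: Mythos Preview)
Your proof is correct and matches the paper's approach exactly: the paper simply applies the triangle inequality to the boundary condition \eqref{dbc.eq} to get $|u(\pm 1)| \le \sum_{i=1}^{m^\pm} |\al_i^\pm|\,|u(\eta_i^\pm)| \le |\al^\pm|\,|u|_0$, with the strict inequality following from $|\al^\pm|<1$ and nontriviality of $u$.
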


\begin{proof}
By \eqref{dbc.eq},
$$
|u(\pm 1)| \le  \sum_{i=1} ^{m^\pm} |\al_i^\pm| |u(\eta_i^\pm)|
\le |\al^\pm|  |u|_0 .
\vspace{- 2 \baselineskip}
$$
\end{proof}
\smallskip

\begin{lemma}  \label{max_int.lem}
If $\la < 0$ then $u$ cannot have a strictly positive local max
$($respectively, a strictly negative local min$)$ in $(-1,1)$.
\end{lemma}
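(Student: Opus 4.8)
The plan is to use the elementary second-derivative test at an interior extremum, together with the sign information read directly off the differential equation \eqref{eval_de.eq}.

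First I would argue by contradiction. Suppose $u$ has a strictly positive local maximum at some point $x_0 \in (-1,1)$, so that $u(x_0) > 0$. Since $u$ is a solution of \eqref{eval_de.eq} it is $C^2$, and because $x_0$ is an interior local maximum the standard calculus facts give $u'(x_0) = 0$ and $u''(x_0) \le 0$. On the other hand, evaluating \eqref{eval_de.eq} at $x_0$ gives $u''(x_0) = -\la r(x_0) u(x_0)$; since $\la < 0$, since $r(x_0) > 0$ (as $r > 0$ on $[-1,1]$), and since $u(x_0) > 0$, the right-hand side is strictly positive, so $u''(x_0) > 0$. This contradicts $u''(x_0) \le 0$, and hence $u$ cannot have a strictly positive local maximum in $(-1,1)$.

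The case of a strictly negative local minimum is then immediate, either by repeating the argument with all inequalities reversed, or simply by applying the first case to $-u$, which is again a solution of \eqref{eval_de.eq} (and of \eqref{dbc.eq}).

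There is no real obstacle here. The only point worth a word of care is the justification that a $C^2$ interior local maximum forces $u''(x_0)\le 0$ (and not necessarily $u''(x_0)<0$); this is precisely why it is important that the equation yields the \emph{strict} inequality $u''(x_0)>0$, so that the contradiction goes through with no need to discuss whether the extremum is isolated or whether $u$ is locally constant near $x_0$.
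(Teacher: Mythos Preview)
Your proof is correct. The approach differs slightly from the paper's: rather than evaluating $u''(x_0)$ directly via the second-derivative test, the paper integrates the equation from the critical point $x_0$ to obtain
\[
u'(x) = -\la \int_{x_0}^x r u ,
\]
and then reads off the sign of $u'$ on either side of $x_0$ to see that $x_0$ must in fact be a local minimum when $u(x_0)>0$ and $\la<0$. Your pointwise argument is a little more direct and avoids any discussion of a neighbourhood of $x_0$; the paper's integral version, on the other hand, gives slightly more --- it shows that any critical point with $u(x_0)>0$ is a strict local minimum, not merely ``not a maximum''. Either route suffices for the lemma as stated.
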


\begin{proof}
Suppose that $u'(x_0) = 0$, $x_0 \in (-1,1)$.
Then
$$
u'(x) =  - \la \int_{x_0}^x r u ,
\quad  x \in [-1,1] ,
$$
and the result follows by inspecting the sign of $u'$
in a neighbourhood of $x_0$.
\end{proof}

Combining Theorem~\ref{De_inverse.thm} with
Lemmas~\ref{max_e_pt.lem} and~\ref{max_int.lem}
shows that $\la > 0$
and that $|u|_0$ is attained in  $(-1,1)$,
that is, $u$ must have a global max or min in $(-1,1)$.
The following lemma shows that if $|\al^\pm|$ are sufficiently small
then $\la$ is uniformly bounded away from zero.

\begin{lemma}\label{labaway.lem}
There exists $\La_4>0$ such that
if $|\al^\pm| \le \frac12$ then $\la \ge \La_4$.
\end{lemma}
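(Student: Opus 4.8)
The plan is to argue by contradiction, exploiting the compactness of $\De^{-1}$ together with the solution estimates of Section~\ref{soln_ests.sec}. Suppose no such $\La_4$ exists. Then there is a sequence of nontrivial solutions $(\la_n, u_n)$ of \eqref{mp_eval.eq}, with coefficients $\bal^{(n)}$ satisfying $|\al^{\pm,(n)}| \le \frac12$, and with $\la_n \to 0$. We already know from the discussion following Lemma~\ref{max_int.lem} that each $\la_n > 0$, so after normalising $|u_n|_0 = 1$ and passing to a subsequence we may assume $\la_n \to \la_* \ge 0$, $\al^{\pm,(n)} \to \al^{\pm,*}$ with $|\al^{\pm,*}| \le \frac12$, and (since each $\eta_i^\pm$ ranges over a compact set, though in fact these are fixed) the boundary data converge. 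The point of the normalisation is that $u_n = -\De_n^{-1}(\la_n r u_n)$ in the relevant space, and since $\De^{-1}: Y \to C^1[-1,1]$ is compact and uniform over $|\al^\pm| < 1$, the sequence $u_n$ is precompact in $C^1[-1,1]$; passing to a further subsequence, $u_n \to u_*$ in $C^1[-1,1]$, where $u_*$ solves $-u_*'' = \la_* r u_*$ with the limiting boundary conditions and $|u_*|_0 = 1$, so $u_*$ is nontrivial.

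Next I would rule out $\la_* = 0$. If $\la_* = 0$ then $u_* '' = 0$, so $u_*$ is affine, $u_*(x) = a + bx$. The limiting boundary conditions $u_*(\pm 1) = \sum_i \al^{\pm,*}_i u_*(\eta^\pm_i)$ with $|\al^{\pm,*}| \le \frac12$ then force $u_* = 0$: indeed if $u_*$ attains its sup-norm at an endpoint, say $|u_*(1)| = |u_*|_0$, then $|u_*(1)| \le \frac12 |u_*|_0$ gives $u_* = 0$, while if $|u_*|_0$ is attained in $(-1,1)$ an affine function attains its max only at an endpoint unless it is constant — and a nonzero constant again contradicts the boundary condition since $1 \le |\al^{\pm,*}| \le \frac12$ is impossible. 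Either way $u_* = 0$, contradicting $|u_*|_0 = 1$. Hence $\la_* > 0$, which contradicts $\la_n \to 0$. This establishes the lemma, with $\La_4$ obtained non-constructively.

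The main obstacle is making the compactness argument genuinely uniform in the coefficients: the inverse operator $\De^{-1}$ and its compactness as a map $Y \to C^1[-1,1]$ depend on $\bal$, so I must use the remark at the end of Section~\ref{prelim.sec} that all the estimates are uniform over $\{\bal : |\al^\pm| < 1\}$. Concretely, one needs a uniform bound $\|\De_{\bal}^{-1}\|_{Y \to C^1}$ and a uniform modulus of continuity (or equicontinuity of the family) so that Arzel\`a--Ascoli applies to $\{u_n\}$ even though the $u_n$ live in spaces $X$ with varying boundary conditions. An alternative, and perhaps cleaner, route that sidesteps the operator-norm bookkeeping is to work directly with the representation $u = C w(\la,\th)$: write each normalised $u_n = C_n w(\la_n, \th_n)$, note $|w(\la_n,\th_n)|_0$ is bounded below by a positive constant uniformly for $\la_n$ in a bounded set (this is essentially the content of \eqref{est_u_ud.eq} away from $\la = 0$, though one must be careful as $\la_n \to 0$ where $w$ could be nearly affine), extract convergent subsequences $\la_n \to \la_*$, $\th_n \to \th_*$, $C_n \to C_*$, and then pass to the limit in the scalar equation $\Ga$-type identity encoding \eqref{dbc.eq}. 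Either way, the crux is the same: a nontrivial limit solution $u_*$ with $\la_* = 0$ would have to be affine and satisfy Dirichlet-type conditions with coefficients of total size at most $\tfrac12 < 1$, which is impossible, so $\la_*$ is bounded away from $0$ and $\La_4$ exists.
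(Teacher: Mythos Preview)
Your argument is correct and follows the same contradiction strategy as the paper: assume $\la_n \to 0$, normalise $|u_n|_0 = 1$, extract a subsequence converging to an affine limit, and contradict the boundary conditions with $|\al^\pm| \le \tfrac12$. The paper's extraction step is more elementary than yours: instead of invoking compactness of $\De^{-1}$ (and the uniformity-in-$\bal$ issue you rightly flag), it simply observes from the ODE that $|u_n''|_0 = \la_n |r u_n|_0 \to 0$, which together with $|u_n|_0 = 1$ forces a subsequence to converge uniformly to some $c + mx$, sidestepping the operator bookkeeping entirely.
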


\begin{proof}
Suppose, on the contrary, that for each $n = 1,2,\dots,$ there exist
$\al_n^\pm$, with $|\al_n^\pm| \le \frac12$,
and a corresponding solution
$(\la_n,u_n)$ of
\eqref{eval_de.eq}-\eqref{dbc.eq}
with $|u_n|_0 = 1$ and $\la_n>0$,
such that  $\la_n \to 0$ as $n \to \infty$.
Then, by \eqref{eval_de.eq}, $|u_n''|_0 \to 0$,
so there exist constants $c$, $m$, and a subsequence such that
$u_n(x) \to c + m x$,
uniformly for $x \in [-1,1]$.
However, this contradicts  \eqref{dbc.eq} and the assumption that
$|\al^\pm| \le \frac12$.
\end{proof}

The next lemma shows that if $|\al^\pm|$ are sufficiently small then
any eigenfunction must lie in one of the nodal sets $T_k$.

\begin{lemma}  \label{efun_in_T.lem}
If $|\al^\pm| < a_1$ then $u'(-1)u'(1) \neq 0$
and $u \in T_k$, for some $k \ge 1$.
\end{lemma}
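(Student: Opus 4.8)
The plan is to show first that $u'(\pm 1)\ne 0$, and then to verify that $u$ satisfies all three defining conditions (a), (b), (c) of some $T_k^\nu$. For the non-vanishing of the derivative at the endpoints, I would apply Lemma~\ref{single_bc_u_simple.lem} at each endpoint separately. Indeed, the eigenfunction $u$ is in particular a nontrivial solution of $-u''=\la r u$ satisfying the single multi-point boundary condition $u(1)=\sum_i \al_i^+ u(\eta_i^+)$, which is exactly \eqref{single_bc.eq} with $\eta_0=1$ and coefficient vector $\al^+$; since $|\al^+|<a_1$ by hypothesis, Lemma~\ref{single_bc_u_simple.lem} gives $u'(1)\ne 0$. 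The same argument at $x=-1$ using $\al^-$ gives $u'(-1)\ne 0$. Hence $u'(-1)u'(1)\ne 0$, which is condition~(a) of $T_k^\nu$ (with the sign $\nu$ then determined by the sign of $u'(-1)$).

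Next I would check condition~(b): the zeros of $u'$ in $(-1,1)$ are all simple and finite in number. Simplicity is immediate from the equation: if $u'(x_0)=0$ and also $u''(x_0)=0$, then $u(x_0)=0$ since $-u''=\la r u$ and $\la r>0$, so $u$ would solve an initial value problem with zero Cauchy data at $x_0$ and hence vanish identically, contradicting nontriviality. Thus every zero of $u'$ in $(-1,1)$ has $u''(x_0)\ne 0$, i.e. is simple. Finiteness then follows because simple zeros are isolated and $[-1,1]$ is compact; equivalently, on the compact interval the energy bound \eqref{u_ud_bounds.eq} (applied to $u=Cw(\la,\th)$) shows $u$ and $u'$ cannot vanish simultaneously, and standard Sturm-type oscillation estimates bound the number of zeros of $u'$ by a constant depending on $\la$. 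Denote the number of such zeros by $k$; I still need $k\ge 1$, which I address below.

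For condition~(c) — that $u$ has a zero strictly between each pair of consecutive zeros of $u'$ — the Lyapunov/energy function $E(\la,u)$ from \eqref{lyap} is the right tool: at a zero $x_0$ of $u'$ we have $E(x_0)=\la r(x_0)u(x_0)^2$, and between two consecutive zeros of $u'$ the function $u'$ keeps a fixed sign while $u$ is monotone; since $-u''=\la r u$ forces $u''$ to have the sign of $-u$, a local max of $u$ occurs where $u>0$ and a local min where $u<0$, so $u$ changes sign between consecutive critical points, giving the required interior zero. The remaining point is $k\ge 1$, equivalently that $u'$ has at least one zero in $(-1,1)$: this is exactly the conclusion "$|u|_0$ is attained in $(-1,1)$" established just before the lemma by combining Theorem~\ref{De_inverse.thm} with Lemmas~\ref{max_e_pt.lem} and~\ref{max_int.lem}, since an interior global extremum is a zero of $u'$. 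With (a), (b), (c) verified and $k\ge 1$, we conclude $u\in T_k^\nu\subset T_k$.

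The step I expect to be the main obstacle is the clean bookkeeping in condition~(c) and the finiteness count in~(b): one must be careful that the "zero of $u$ strictly between consecutive zeros of $u'$" is genuinely interior and that no zero of $u'$ is missed or double-counted, and that the endpoints $\pm 1$ (where $u'\ne 0$ but $u$ need not vanish) are handled correctly when they are the outermost critical-point-free subintervals. The sign-tracking via $u''=-\la r u$ together with the energy identity \eqref{lyap} should make this routine, but it is where the argument needs the most care.
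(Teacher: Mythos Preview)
Your proposal is correct and follows the same approach as the paper. The paper's proof is extremely terse—it simply says the first part follows from Lemma~\ref{single_bc_u_simple.lem} and the second ``follows from this, together with the definition of the sets $T_k$ and the properties of solutions of the differential equation \eqref{eval_de.eq}''—and you have accurately unpacked exactly what those ``properties'' are: uniqueness for the Cauchy problem gives simplicity of the zeros of $u'$, the sign relation $u''=-\la r u$ forces local maxima to be positive and local minima negative (hence a sign change of $u$ between consecutive critical points), and the interior attainment of $|u|_0$ established just before the lemma supplies $k\ge 1$.
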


\begin{proof}
The first part of the lemma follows from
Lemma~\ref{single_bc_u_simple.lem}.
The second part then follows from this, together with the definition of
the sets $T_k$ and the properties of solutions of the differential
equation \eqref{eval_de.eq}.
\end{proof}

We now define $C^1$ functions
$\Ga^\pm : (0,\infty) \X \R \X \R^{m^\pm} \to \R$
by
\begin{equation}\label{defofGa}
\Ga^\pm(\la,\th,\al^\pm) :=
w(\la,\th)(\pm 1) - \sum^{m^\pm}_{i=1} \al_i^\pm w(\la,\th)(\eta_i^\pm)
\end{equation}
(where $w(\la,\th)$ is as in Section~\ref{prelim.sec}).
Substituting $w(\la,\th)$ into \eqref{dbc.eq}
shows that, for given coefficients $\al^\pm$, a number $\la$ is an
eigenvalue if and only if
\begin{equation}   \label{simul_zeros.eq}
\Ga^\pm(\la,\th,\al^\pm) = 0 ,
\end{equation}
for some $\th \in \R$.
Hence it suffices to consider the set of solutions of
\eqref{simul_zeros.eq}.

We will prove Theorem~\ref{spec.thm} by continuation with respect
to $\bal$, away from
$\bal=\bzero$, where the required information on the
solutions of \eqref{simul_zeros.eq} follows from the standard theory of
the separated Dirichlet problem,
with boundary conditions $u(\pm 1)=0$.
For reference, we state this in the following lemma.

\begin{lemma}  \label{Ga_zeros_at_zero.lem}
Suppose that $\bal=\bzero$.
For each $k = 1,2,\dots,$
there exists an eigenvalue $\la^\bzero_k > 0$ and a unique
$\th_k^\bzero \in [0,\pi)$ such that
$u^\bzero_k = w(\la^\bzero_k,\th^\bzero_k) \in T_k$
is a corresponding eigenfunction.
Also, $\la^\bzero_k < \la^\bzero_{k+1}$, and $\la^\bzero_k \to\infty$
as $k \to\infty$.
\end{lemma}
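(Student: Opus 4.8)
The plan is to treat this as the classical Sturm--Liouville theory for the regular Dirichlet problem $-u''=\la r u$ on $(-1,1)$, $u(\pm1)=0$, with $r\in C^1[-1,1]$, $r>0$, recast in the $w(\la,\th)$ and $T_k$ notation used here; one may either quote it from a standard reference or argue as follows. First I would note that, with $\bal=\bzero$, the conditions \eqref{simul_zeros.eq} read $\Ga^\pm(\la,\th,0)=w(\la,\th)(\pm 1)=0$, so $\la$ is an eigenvalue precisely when $w(\la,\th)$ vanishes at both end-points for some $\th$. Multiplying \eqref{eval_de.eq} by $u$ and integrating by parts over $(-1,1)$ gives $\la\int_{-1}^1 r u^2=\int_{-1}^1 (u')^2>0$ for any non-trivial eigenfunction, so every eigenvalue is positive; and the eigenspace is one-dimensional, since two eigenfunctions vanish at $-1$ with non-zero derivative there (otherwise they vanish identically) and are therefore proportional by uniqueness for the initial value problem. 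Since $w(\la,\th)=\sin\th\,w(\la,\pi/2)+\cos\th\,w(\la,0)$, proportionality of $w(\la,\th)$ and $w(\la,\th')$ forces $\th'\in\{\th,\th+\pi\}\pmod{2\pi}$, so once $\la=\la_k^\bzero$ is known the accompanying $\th_k^\bzero$ is unique in $[0,\pi)$.

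To produce the eigenvalues I would use the Pr\"ufer transformation. For $\la>0$ let $\psi_\la$ be the solution of \eqref{eval_de.eq} with $\psi_\la(-1)=0$, $\psi_\la'(-1)=1$ (a positive multiple of some $w(\la,\th)$), and write $\psi_\la=\rho\sin\phi$, $\psi_\la'=(\la r)^{1/2}\rho\cos\phi$ with $\rho>0$, $\phi(-1)=0$, and $\phi$ continuous in $x$. In these coordinates a zero of $\psi_\la$ corresponds to $\phi$ an integer multiple of $\pi$ and a zero of $\psi_\la'$ to $\phi$ an odd multiple of $\pi/2$, and $\phi$ is strictly increasing through each such value. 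Standard Sturm comparison (comparing $r$ with the constants $\rmin$ and $\rmax$) shows that $x\mapsto\phi_\la(x)$ is well defined on $[-1,1]$, that $\la\mapsto\phi_\la(1)$ is continuous and strictly increasing, with $\phi_\la(1)\to0$ as $\la\to0^+$ and $\phi_\la(1)\to\infty$ as $\la\to\infty$. Hence for each $k\ge1$ there is a unique $\la_k^\bzero>0$ with $\phi_{\la_k^\bzero}(1)=k\pi$; this is the $k$-th Dirichlet eigenvalue, and $\la_k^\bzero<\la_{k+1}^\bzero$ together with $\la_k^\bzero\to\infty$ as $k\to\infty$ are immediate from the monotonicity and the limits.

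It then remains to check that the eigenfunction $u_k^\bzero=w(\la_k^\bzero,\th_k^\bzero)$, a scalar multiple of $\psi_{\la_k^\bzero}$, lies in $T_k$. Along $[-1,1]$ the angle $\phi_{\la_k^\bzero}$ increases from $0$ to $k\pi$, so it crosses exactly $k$ odd multiples of $\pi/2$, all strictly inside $(-1,1)$; these are the zeros of $(u_k^\bzero)'$, and each is simple since $u_k^\bzero\ne0$ wherever $(u_k^\bzero)'=0$ (otherwise $u_k^\bzero\equiv0$), giving condition (b) in the definition of $T_k$. At $x=\pm 1$ we have $\phi\in\{0,k\pi\}$, so $(u_k^\bzero)'(\pm 1)\ne0$, and after the normalization $\rho>0$ the sign of $(u_k^\bzero)'(-1)$ is fixed, so that $u_k^\bzero\in T_k^+$ or $-u_k^\bzero\in T_k^+$; either way condition (a) holds and $u_k^\bzero\in T_k^+\cup T_k^-$. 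Finally, between two consecutive zeros of $(u_k^\bzero)'$ the angle $\phi$ passes through an integer multiple of $\pi$, i.e. $u_k^\bzero$ has a strictly interior zero there, which is condition (c). Thus $u_k^\bzero\in T_k$.

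The only genuinely substantive ingredient is the monotonicity of $\phi_\la(1)$ in $\la$ together with its limits as $\la\to0^+$ and $\la\to\infty$, i.e. the Sturm comparison argument; everything else is bookkeeping in the Pr\"ufer variables. Since the paper presents this lemma as standard background for the separated problem, I expect the cleanest write-up simply cites a standard Sturm--Liouville reference for these facts rather than reproducing the comparison estimates.
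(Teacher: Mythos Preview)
Your proposal is correct, and you have anticipated the paper precisely: the paper gives no proof at all for this lemma, merely stating it ``for reference'' as a consequence of ``the standard theory of the separated Dirichlet problem, with boundary conditions $u(\pm 1)=0$.'' Your outline via the Pr\"ufer transformation is a standard and correct way to supply the details the paper omits, and your final remark that the cleanest write-up would simply cite a reference is exactly what the paper does.
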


By construction, each $(\la_k^\bzero,\th_k^\bzero)$
satisfies \eqref{simul_zeros.eq}
(with $\bal = \bzero$).
Of course, by the periodicity properties of $\Ga^\pm$ with respect to
$\th$, there are other solutions of \eqref{simul_zeros.eq}
than those in Lemma~\ref{Ga_zeros_at_zero.lem},
but these do not yield distinct solutions of
the eigenvalue problem \eqref{mp_eval.eq}.
In fact, to remove these extra solutions and to reduce the domain of
$\th$ to a compact set, from now on we will regard $\th$ as lying in
the circle obtained from the interval $[0,2\pi]$ by identifying the
points $0$ and $2\pi$,
which we denote by $S^1$,
and we regard the domain of the functions $\Ga^\pm$ as
$(0,\infty) \X S^1 \X \R^{m^\pm}$.

We now wish to solve \eqref{simul_zeros.eq} for small $\bal$ by applying
the implicit function theorem at the points
$(\la^\bzero_k,\th^\bzero_k,\bzero)$, $k \ge 1$.
We would like to obtain solutions
$(\la_k(\bal),\th_k(\bal),\bal)$, for all $k \ge 1$,
for all $\bal$ lying in a fixed set $\A_\ga$
(with $\ga$ independent of $k$).
Simply applying the implicit function theorem at each of the points
$(\la^\bzero_k,\th^\bzero_k,\bzero)$
would not, of course, yield a fixed set $\A_\ga$,
so we proceed in two steps:
we first obtain a fixed set $\A_\ga$ for all
`large' eigenvalues using uniform estimates and continuation;
we then deal with the remaining (finite) set of `small' eigenvalues.

The following result will allow us to apply the implicit function
theorem at the points
$(\la^\bzero_k,\th^\bzero_k,\bzero)$,
for sufficiently large $k \ge 1$.

\begin{prop}  \label{nu_Ga_zero_zero_pos.prop}
If $\la^\bzero_k \ge \La_3$ then
\begin{equation}   \label{nu_Ga_zero_zero_pos.eq}
\pm \Ga_\la^\pm(\la^\bzero_k,\th^\bzero_k,0)
\Ga_\th^\pm(\la^\bzero_k,\th^\bzero_k,0) > 0.
\end{equation}
\end{prop}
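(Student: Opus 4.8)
The plan is to compute the two partial derivatives $\Ga_\la^\pm$ and $\Ga_\th^\pm$ at the point $(\la^\bzero_k,\th^\bzero_k,0)$ explicitly enough to read off their signs. When $\bal = \bzero$ the functions $\Ga^\pm$ reduce to $\Ga^\pm(\la,\th,0) = w(\la,\th)(\pm 1)$, so differentiating in $\th$ gives $\Ga_\th^\pm(\la,\th,0) = w_\th(\la,\th)(\pm 1)$, and differentiating in $\la$ gives $\Ga_\la^\pm(\la,\th,0) = w_\la(\la,\th)(\pm 1)$. Since $u^\bzero_k = w(\la^\bzero_k,\th^\bzero_k)$ is an eigenfunction of the separated Dirichlet problem, we have $w(\la^\bzero_k,\th^\bzero_k)(\pm 1) = 0$, so the point $(\la^\bzero_k,\th^\bzero_k)$ is exactly of the type covered by Lemma~\ref{signla.lem}: namely $w(\pm 1) = 0$.

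The hypothesis $\la^\bzero_k \ge \La_3$ is precisely what is needed to invoke Lemma~\ref{signla.lem}. That lemma tells us that $w'(\pm 1) w_\th(\pm 1) > 0$ (part (a)) and $\pm w'(\pm 1) w_\la(\pm 1) > 0$ (part (b)), all evaluated at $(\la^\bzero_k,\th^\bzero_k)$. Multiplying these two inequalities together gives
\begin{equation*}
\pm w'(\pm 1)^2 \, w_\th(\pm 1) \, w_\la(\pm 1) > 0 ,
\end{equation*}
and since $w'(\pm 1)^2 > 0$ (note $w'(\pm 1) \ne 0$, which follows from $w(\pm 1) = 0$ together with the non-triviality of the eigenfunction and uniqueness for the initial value problem, or directly from Lemma~\ref{signla.lem}(a)), we may divide by $w'(\pm 1)^2$ to conclude $\pm w_\th(\pm 1) w_\la(\pm 1) > 0$. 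Substituting $\Ga_\th^\pm(\la^\bzero_k,\th^\bzero_k,0) = w_\th(\la^\bzero_k,\th^\bzero_k)(\pm 1)$ and $\Ga_\la^\pm(\la^\bzero_k,\th^\bzero_k,0) = w_\la(\la^\bzero_k,\th^\bzero_k)(\pm 1)$ then yields exactly \eqref{nu_Ga_zero_zero_pos.eq}.

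So the proof is essentially a two-line reduction: identify the partials of $\Ga^\pm$ at $\bal = \bzero$ with the boundary values of $w_\la$ and $w_\th$, then quote Lemma~\ref{signla.lem} and multiply. I do not expect any genuine obstacle here; the only point requiring a little care is the bookkeeping of the two $\pm$ signs — one has to check that the sign $\pm$ attached to $w'(\pm 1) w_\la(\pm 1)$ in Lemma~\ref{signla.lem}(b) is the same $\pm$ as the endpoint, so that after multiplying by the (sign-definite, positive) factor coming from part (a) one recovers the sign $\pm$ in front of the product $\Ga_\la^\pm \Ga_\th^\pm$ as stated. All the analytic work has already been done in establishing Lemmas~\ref{int_est.lem}, \ref{est_v.lem} and \ref{signla.lem}; this proposition is just the packaging of those facts in the form needed for the implicit function theorem argument that follows.
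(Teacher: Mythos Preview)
Your proposal is correct and follows essentially the same approach as the paper's proof: both reduce to the observation that $\Ga^\pm_\la(\la,\th,0) = w_\la(\pm 1)$ and $\Ga^\pm_\th(\la,\th,0) = w_\th(\pm 1)$, note that $w(\pm 1)=0$ since $\la^\bzero_k$ is a Dirichlet eigenvalue, and then invoke Lemma~\ref{signla.lem}. The paper states the conclusion $\pm w_\la(\pm 1) w_\th(\pm 1) > 0$ in one line, whereas you spell out the intermediate step of multiplying parts (a) and (b) and dividing by $w'(\pm 1)^2$, but the substance is identical.
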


\begin{proof}
Since $\la^\bzero_k$ is a Dirichlet eigenvalue
$w(\la^\bzero_k,\th^\bzero_k)(\pm1)=0$,
so by Lemma~\ref{signla.lem},
if $\la^\bzero_k \ge \La_3$ then
$$
\pm w_\la(\la^\bzero_k,\th^\bzero_k)(\pm 1)
w_\th(\la^\bzero_k,\th^\bzero_k)(\pm 1)
 > 0 ,
$$
which, by the definition of $\Ga^\pm$, is
\eqref{nu_Ga_zero_zero_pos.eq}.
\end{proof}

We now wish to extend the result of
Proposition~\ref{nu_Ga_zero_zero_pos.prop}
to small $\bal\neq\bzero$.
To do this we first prove a partial result.
We will suppose from now on
that $|\al^\pm| \le \min \{a_1,\frac12\}$
so that Theorem~\ref{single_bc.thm} and
Lemmas~\ref{single_bc_simple.lem},
\ref{labaway.lem} and \ref{efun_in_T.lem} hold.

\begin{lemma}  \label{Ga_simple_zeros.lem}
There exists $a_s\in \big(0,\min \{a_1,\frac12\}\big)$ and
$\La_s > 0$ such that,
if $\nu \in \{\pm\}$, $\la \ge \La_s$, $0 \le |\al^\nu| < a_s$,
and $\th \in S^1$, then
\begin{equation}   \label{Ga_ab_zero_implies.eq}
\Ga^\nu(\la,\th,\al^\nu) = 0
  \implies
 \Ga^\nu_\la(\la,\th,\al^\nu) \,
\Ga^\nu_\th(\la,\th,\al^\nu) \ne 0.
\end{equation}
\end{lemma}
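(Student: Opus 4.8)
The plan is to combine the ``single boundary condition'' machinery of Section~\ref{single_bc.sec} with the large-$\lambda$ sign information from Proposition~\ref{nu_Ga_zero_zero_pos.prop} / Lemma~\ref{signla.lem}, arguing by contradiction and compactness. Fix $\nu\in\{\pm\}$; by symmetry it suffices to treat $\nu=+$. Suppose the conclusion fails. Then there are sequences $a_s^{(n)}\to 0$, $\Lambda_s^{(n)}\to\infty$, coefficients $\al_n^+$ with $|\al_n^+|<a_s^{(n)}\to 0$, scalars $\la_n\ge\Lambda_s^{(n)}\to\infty$, and $\th_n\in S^1$ with
\[
\Ga^+(\la_n,\th_n,\al_n^+)=0 \quad\text{and}\quad \Ga^+_\la(\la_n,\th_n,\al_n^+)\,\Ga^+_\th(\la_n,\th_n,\al_n^+)=0 .
\]
First I would show $\Ga^+_\th(\la_n,\th_n,\al_n^+)\ne 0$ for $n$ large: since $|\al_n^+|<a_1$ eventually, Lemma~\ref{single_bc_simple.lem} (applied with the single boundary condition at $\eta_0=1$) gives exactly this implication, $\Ga^+=0\Rightarrow\Ga^+_\th\ne0$. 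Hence the vanishing product forces $\Ga^+_\la(\la_n,\th_n,\al_n^+)=0$ for all large $n$.

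Next I would pass to the limit. Here $\th_n$ lives in the compact circle $S^1$, so after extracting a subsequence $\th_n\to\th_*\in S^1$. The difficulty is that $\la_n\to\infty$, so one cannot simply take a limit of $w(\la_n,\th_n)$ itself; instead I would rescale. The key quantitative inputs are already in the excerpt: Lemma~\ref{en_est.lem} bounds $E(\la,w)$ between $\la r_{\min}c_{\min}$ and $\la r_{\max}c_{\max}$, and Lemma~\ref{est_v.lem} gives $\la^{1/2}|w_\la|_0\le c_2$, $|w_\la'|_0\le c_3$ uniformly in $\th$ for $\la\ge\Lambda_2$. Writing out $\Ga^+_\la(\la,\th,\al^+)=w_\la(\la,\th)(1)-\sum_i\al_i^+ w_\la(\la,\th)(\eta_i^+)$ and using $|\al_n^+|\to 0$ together with the uniform bound on $|w_\la|_0$, the equation $\Ga^+_\la=0$ degenerates to $w_\la(\la_n,\th_n)(1)\to 0$ — but more useful is to note that $\Ga^+(\la_n,\th_n,\al_n^+)=0$ with $|\al_n^+|\to0$ forces $w(\la_n,\th_n)(1)\to 0$, i.e. the $\la_n$ are asymptotically Dirichlet eigenvalues in the relevant sense. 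The cleanest route is: for $n$ large, $|\al_n^+|<\min\{a_1,\tfrac12\}$, so by Lemma~\ref{efun_in_T.lem} and the structure of $\Ga^+$ there is a genuine Dirichlet-type relation; combined with $|w(\la_n,\th_n)(1)|\le|\al_n^+|\,|w(\la_n,\th_n)|_0$ and the sandwiched energy, one gets that $w(\la_n,\th_n)^2 r$ has integral bounded below by $c_1>0$ on $[0,1]$ once $\la_n\ge\Lambda_1$ (Lemma~\ref{int_est.lem}).

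Now I would contradict $\Ga^+_\la(\la_n,\th_n,\al_n^+)=0$ directly. By Lemma~\ref{lag.lem} applied to $u=w(\la_n,\th_n)$ and $z=w_\la(\la_n,\th_n)$ (cf.\ \eqref{IVPwla} and the computation \eqref{wwla} in Lemma~\ref{signla.lem}), one obtains an identity of the shape
\[
w(\la_n,\th_n)'(1)\,w_\la(\la_n,\th_n)(1) = \int_0^{1} r\, w(\la_n,\th_n)^2 \;-\; \tfrac12\la_n^{-1/2} r(0)^{1/2}\sin\th_n\cos\th_n
\]
modulo a boundary correction at $x=1$ that is $O(|\al_n^+|)$ because $w(\la_n,\th_n)(1)$ is; precisely, replacing the Dirichlet condition $w(1)=0$ by $|w(1)|\le|\al_n^+|\,|w|_0$ changes \eqref{wwla} only by a term controlled via \eqref{est_u_ud.eq} and \eqref{est_v_vd.eq}. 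For $\la_n$ large this right-hand side is at least $c_1-\tfrac12\la_n^{-1/2}r(0)^{1/2}-C|\al_n^+|\ge \tfrac12 c_1>0$, while $w(\la_n,\th_n)'(1)\ne0$ (Lemma~\ref{single_bc_u_simple.lem} / Lemma~\ref{efun_in_T.lem}). Hence $w_\la(\la_n,\th_n)(1)\ne0$, and then $\Ga^+_\la(\la_n,\th_n,\al_n^+)=w_\la(\la_n,\th_n)(1)-\sum_i\al_i^+w_\la(\la_n,\th_n)(\eta_i^+)$ has magnitude at least $|w_\la(\la_n,\th_n)(1)|-|\al_n^+|\,c_2\la_n^{-1/2}$. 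The main obstacle is quantifying the lower bound on $|w_\la(\la_n,\th_n)(1)|$ uniformly: from the displayed identity, $|w_\la(\la_n,\th_n)(1)|\ge \tfrac12 c_1/|w(\la_n,\th_n)'(1)|\ge \tfrac12 c_1 (r_{\max}c_{\max}\la_n)^{-1/2}$ using \eqref{est_u_ud.eq}, so $\la_n^{1/2}|w_\la(\la_n,\th_n)(1)|\ge\tfrac12 c_1(r_{\max}c_{\max})^{-1/2}$, a fixed positive constant. Since $|\al_n^+|\,\la_n^{1/2}|w_\la(\la_n,\th_n)(\eta_i^+)|\le|\al_n^+|c_2\to0$, for $n$ large $\la_n^{1/2}|\Ga^+_\la(\la_n,\th_n,\al_n^+)|\ge\tfrac14 c_1(r_{\max}c_{\max})^{-1/2}>0$, contradicting $\Ga^+_\la(\la_n,\th_n,\al_n^+)=0$. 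This contradiction proves the lemma; reading off the constants from the argument gives explicit admissible values of $a_s$ and $\Lambda_s$ (the latter essentially $\max\{\Lambda_3,\Lambda_2\}$ together with a threshold making the $\la^{-1/2}$ and $|\al^+|$ error terms small).
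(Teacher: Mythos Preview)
Your proof is correct and follows essentially the same route as the paper: reduce to $\Ga_\la^+\ne 0$ via Lemma~\ref{single_bc_simple.lem}, then use the Lagrange identity $\int_0^1 w^2 r = [w'w_\la - w w_\la']_0^1$ together with the lower bound from Lemma~\ref{int_est.lem} and the estimates \eqref{est_u_ud.eq}, \eqref{est_v_vd.eq} to force a contradiction when $\la$ is large and $|\al|$ is small. The only differences are cosmetic: the paper argues directly (no sequences, no extracted subsequence --- which you never actually use) and bounds $\int_0^1 w^2 r$ from above by $c_4|\al| + \tfrac12 r(0)^{1/2}\la^{-1/2}$ in one stroke rather than passing through a lower bound on $|w_\la(1)|$.
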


\begin{proof}
We prove the result for $\Ga^+$, and we omit the superscript throughout
the proof;
the other case is similar.
We also omit the argument $(\la,\th,\al)$ throughout.

By Lemma~\ref{single_bc_simple.lem},
$\Ga=0 \Rightarrow \Ga_\th \ne  0$, so we need only prove that
$
\Ga=0 \Rightarrow \Ga_\la \ne  0.
$
Suppose, on the contrary, that there exists some $(\la,\th,\al)$ such
that
\begin{equation}  \label{Gd_zero.eq}
\Ga = w(1) - \sum^{m}_{i=1} \al_i w(\eta_i) = 0,
\quad
\Ga_\la = w_\la(1) -  \sum^{m}_{i=1} \al_i w_\la(\eta_i)  = 0 .
\end{equation}
Then
\begin{equation}  \label{estal}
|w(1)| \le |\al||w|_0 , \quad  |w_\la(1)| \le |\al||w_\la|_0.
\end{equation}
It follows from \eqref{lag.eq} with $u=v=w$ and $z=w_\la$ that
\begin{align*}
\int_0^1 w^2 r  &= [w' w_\la - w w_\la']_0^1.
\end{align*}
Inserting the estimates
\eqref{est_u_ud.eq}, \eqref{estuu}, \eqref{IVPwla}, \eqref{est_v_vd.eq}
and \eqref{estal}
into this equation shows that,
for $\la \ge \max \{\La_1,\La_2\}$,
\begin{equation}
 c_1 \le \int_0^1 w^2 r
 \le   c_4 |\al|  + \frac12 r(0)^{1/2} \la^{-1/2},
\end{equation}
where
$c_4 := (\rmax \cmax)^{1/2} ( c_2 + c_3\rmin^{-1/2} ) > 0$.
This proves that if $\la$ is sufficiently large and $|\al^\pm|$ is
sufficiently small then \eqref{Gd_zero.eq} cannot hold,
which completes the proof of Lemma~\ref{Ga_simple_zeros.lem}.
\end{proof}

We now extend the result of
Lemma~\ref{Ga_simple_zeros.lem}
to yield similar  sign conditions to those in
\eqref{nu_Ga_zero_zero_pos.eq}
in
Proposition~\ref{nu_Ga_zero_zero_pos.prop}.

\begin{prop}  \label{nu_Ga_zero_pos.prop}
If $\nu \in \{\pm\}$, $\la \ge \La_s$, $0 \le |\al^\nu| < a_s$
and $\th \in S^1$,
then
\begin{equation}   \label{nu_Ga_zero_pos.eq}
\Ga^\nu(\la,\th,\al^\nu) = 0
  \implies
\nu  \, \Ga^\nu_\la(\la,\th,\al^\nu) \,
\Ga^\nu_\th(\la,\th,\al^\nu) > 0.
\end{equation}
\end{prop}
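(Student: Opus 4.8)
The plan is to combine the ``non-degeneracy'' result of Lemma~\ref{Ga_simple_zeros.lem} with a continuation (connectedness) argument from $\bal=\bzero$, where the sign is known by Proposition~\ref{nu_Ga_zero_zero_pos.prop}. Lemma~\ref{Ga_simple_zeros.lem} already tells us that on the whole parameter range $\nu\in\{\pm\}$, $\la\ge\La_s$, $0\le|\al^\nu|<a_s$, $\th\in S^1$, the product $\Ga^\nu_\la\,\Ga^\nu_\th$ is non-zero whenever $\Ga^\nu=0$; the only thing left is to pin down its sign. Since $\nu\,\Ga^\nu_\la\,\Ga^\nu_\th$ is a continuous, nowhere-zero function on the (connected) zero set of $\Ga^\nu$ in this range, it suffices to check the sign at a single point of each connected component and verify that the zero set is connected (or at least that every component meets the slice $\al^\nu=0$).

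First I would fix $\nu=+$ (the case $\nu=-$ being symmetric) and consider the solution set $Z:=\{(\la,\th,\al^+): \la\ge\La_s,\ |\al^+|<a_s,\ \th\in S^1,\ \Ga^+(\la,\th,\al^+)=0\}$. By Lemma~\ref{single_bc_simple.lem} (applied with $\la$ fixed, so that $\Ga^+(\la,\cdot,\cdot)$ plays the role of $\Ga(\cdot,\cdot)$ there), for each fixed $\la$ the equation $\Ga^+(\la,\cdot,\al^+)=0$ has exactly one solution $\th=\th(\la,\al^+)$ in $[0,\pi)$, depending continuously (indeed $C^1$, by the implicit function theorem, since $\Ga^+_\th\ne0$ there) on $(\la,\al^+)$ on the given range. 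Thus $Z$ is the graph of a continuous function over the connected set $\{\la\ge\La_s\}\times\{|\al^+|<a_s\}$ (after we agree to use the $[0,\pi)$-representative of $\th$; the other solutions in $S^1$ differ by $\th\mapsto\th+\pi$ and give $w\mapsto-w$, hence the same sign of the product), so $Z$ is connected. Consequently $+\,\Ga^+_\la\,\Ga^+_\th$ has constant sign on $Z$.

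Next I would evaluate that sign at one convenient point of $Z$. Pick any $k$ with $\la^\bzero_k\ge\max\{\La_s,\La_3\}$ — such $k$ exist since $\la^\bzero_k\to\infty$ — and take the point $(\la^\bzero_k,\th^\bzero_k,0)\in Z$. By Proposition~\ref{nu_Ga_zero_zero_pos.prop}, at this point $+\,\Ga^+_\la(\la^\bzero_k,\th^\bzero_k,0)\,\Ga^+_\th(\la^\bzero_k,\th^\bzero_k,0)>0$. By the constancy of sign established above, $+\,\Ga^+_\la\,\Ga^+_\th>0$ throughout $Z$, which is exactly \eqref{nu_Ga_zero_pos.eq} for $\nu=+$. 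The case $\nu=-$ is identical, using the ``$-$'' statements in Proposition~\ref{nu_Ga_zero_zero_pos.prop} and Lemma~\ref{Ga_simple_zeros.lem}.

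The main obstacle is the bookkeeping around the variable $\th\in S^1$: one must be careful that ``the solution set is connected'' is true only after quotienting out the $\th\mapsto\th+\pi$ symmetry (which doubles the zero set in $S^1$ but leaves $w$, and hence the sign of $\Ga^+_\la\,\Ga^+_\th$, unchanged up to an overall sign that cancels in the product). Concretely, I would phrase the argument in terms of the single-valued branch $\th(\la,\al^+)\in[0,\pi)$ furnished by Lemma~\ref{single_bc_simple.lem} and continuity, so that no genuine topology of $S^1$ is needed — just connectedness of the parameter cylinder $\{\la\ge\La_s\}\times\{|\al^+|<a_s\}$ together with the already-proved fact that $\Ga^+_\la\,\Ga^+_\th$ never vanishes along the branch. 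Everything else is immediate from the cited results.
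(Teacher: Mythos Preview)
Your argument is correct and uses the same two ingredients as the paper --- Lemma~\ref{Ga_simple_zeros.lem} for non-vanishing of the product on the zero set, and the known sign at $\al^\nu=0$ --- but the continuation is organised differently. The paper does not attempt to show that the entire zero set $Z$ is connected; instead it fixes the given point $(\la,\th,\al^\nu)$ and follows the one-parameter family $t\mapsto(\la,\tth(t),t\al^\nu)$, $t\in[0,1]$, obtained by scaling $\al^\nu$ (with $\la$ held fixed) and solving $\Ga^\nu=0$ for $\tth(t)$ via the implicit function theorem, using only $\Ga^\nu_\th\ne0$. At $t=0$ this lands on a point $(\la,\tth(0),0)$ where $w(\nu1)=0$ but $\la$ need not be a Dirichlet eigenvalue, so the paper is really invoking Lemma~\ref{signla.lem} rather than Proposition~\ref{nu_Ga_zero_zero_pos.prop} as literally stated. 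Your global graph argument buys you the ability to evaluate the sign at a genuine Dirichlet eigenvalue $(\la^\bzero_k,\th^\bzero_k,0)$, where Proposition~\ref{nu_Ga_zero_zero_pos.prop} applies verbatim; the cost is the $\th\mapsto\th+\pi$ bookkeeping you correctly flag (the zero set in $S^1$ is a two-sheeted cover of the contractible base, and the sign of $\Ga^\nu_\la\Ga^\nu_\th$ agrees on both sheets). The paper's path-based version sidesteps that covering issue entirely, since compactness of $[0,1]$ and $\Ga^\nu_\th\ne0$ suffice for the continuation without any global description of $Z$.
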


\begin{proof}
Suppose that $\Ga^\nu(\la,\th,\al^\nu) = 0$.
We regard $(\la,\th,\al^\nu)$ as fixed,
and consider the equation
\begin{equation}  \label{Gz_eq_z.eq}
G(\tth,t) := \Ga^\nu(\la,\tth,t \al^\nu) = 0,
\quad (\tth ,\ t) \in S^1 \X [0,1].
\end{equation}
Clearly, if $t \in [0,1]$ then $|t\al^\nu| < a_s$,
so by \eqref{Ga_ab_zero_implies.eq},
\begin{equation}  \label{Gz_th.eq}
G(\th,1) = 0 \quad \text{and} \quad
G(\tth,t)  = 0 \implies G_\th(\tth,t)  \ne 0 .
\end{equation}
Hence, by \eqref{Gz_th.eq}, the implicit function theorem, and the
compactness of $S^1$,
there exists a $C^1$ solution function
$
t \to \tth(t) : [0,1] \to S^1 ,
$
for \eqref{Gz_eq_z.eq}
such that
$$
\tth(1) = \th \quad \text{and} \quad
\Ga^\nu(\la,\tth(t),t\al^\nu) = 0 , \quad t \in [0,1]
$$
(the local existence of this solution function, near $t=1$, is trivial;
standard arguments show that its domain can be extended to include the
interval $[0,1]$).

Next, by Proposition~\ref{nu_Ga_zero_zero_pos.prop}, the implication
\eqref{nu_Ga_zero_pos.eq} holds
at $(\la,\th,\al^\nu) = (\la,\tth(0),0)$
and hence, by \eqref{Ga_ab_zero_implies.eq} and continuity,
\eqref{nu_Ga_zero_pos.eq} holds at
$(\la,\tth(t),t \al^\nu)$ for all $t \in [0,1]$.
In particular, putting $t=1$ shows that \eqref{nu_Ga_zero_pos.eq} holds
at $(\la,\th,\al^\nu)$, which completes the proof
of Proposition~\ref{nu_Ga_zero_pos.prop}.
\end{proof}

We now return to the pair of equations \eqref{simul_zeros.eq}.
To solve these using the implicit function theorem we define the
Jacobian determinant
$$
J(s,\th,\bal) :=
\begin{vmatrix}
\Ga^-_\la(\la,\th,\al^-)  & \Ga^-_\th(\la,\th,\al^-)
\\[1 ex]
\Ga^+_\la(\la,\th,\al^+)  & \Ga^+_\th(\la,\th,\al^+)
\end{vmatrix},
$$
for
$(\la,\th,\bal) \in (0,\infty) \X S^1 \X \As.$
The sign properties of
$\Ga^\pm_\la,\ \Ga^\pm_\th$
proved in
Propositions~\ref{nu_Ga_zero_zero_pos.prop}
and~\ref{nu_Ga_zero_pos.prop} now yield the following results.

\begin{cor}  \label{jac.cor}
$(a)$\
If $\la^\bzero_k \ge \La_s$ then
$J(\la^\bzero_k,\th^\bzero_k,\bal) \ne 0$.
\\
$(b)$\ If $\la \ge \La_s$ and $0 \le |\al^\pm| < a_s$ then
$$
\Ga^+(\la,\th,\al^+) = \Ga^-(\la,\th,\al^-) = 0
\implies  J(\la,\th,\bal) \ne 0 .
$$
\end{cor}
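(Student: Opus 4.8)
The plan is to read both parts off directly from the sign conditions for $\Ga^\pm_\la$ and $\Ga^\pm_\th$ already established in Propositions~\ref{nu_Ga_zero_zero_pos.prop} and~\ref{nu_Ga_zero_pos.prop}, using one elementary fact about $2\times2$ determinants: if real numbers $a,b,c,d$ satisfy $ab>0$ and $cd<0$, then $(ad)(bc)=(ab)(cd)<0$, so $ad$ and $bc$ have opposite signs, and hence $cb-ad\ne0$ (in fact $|cb-ad|=|cb|+|ad|>0$). Since
$$
J(\la,\th,\bal)=\Ga^-_\la(\la,\th,\al^-)\,\Ga^+_\th(\la,\th,\al^+)-\Ga^-_\th(\la,\th,\al^-)\,\Ga^+_\la(\la,\th,\al^+),
$$
it will suffice, at the point in question, to produce the two inequalities $\Ga^+_\la\Ga^+_\th>0$ and $\Ga^-_\la\Ga^-_\th<0$, and then apply the fact with $(a,b)=(\Ga^+_\la,\Ga^+_\th)$, $(c,d)=(\Ga^-_\la,\Ga^-_\th)$, so that $J=cb-ad\ne0$.

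Part~(b) is then immediate: under its hypotheses ($\la\ge\La_s$, $0\le|\al^\pm|<a_s$, $\Ga^\pm(\la,\th,\al^\pm)=0$) I would apply Proposition~\ref{nu_Ga_zero_pos.prop} with $\nu=+$ to get $\Ga^+_\la\Ga^+_\th>0$ and with $\nu=-$ to get $\Ga^-_\la\Ga^-_\th<0$, which are exactly the inequalities needed. For part~(a) I would first enlarge $\La_s$, if necessary, so that $\La_s\ge\La_3$; this does no harm, as it only strengthens Lemma~\ref{Ga_simple_zeros.lem}. Since $\la^\bzero_k$ is a Dirichlet eigenvalue, $w(\la^\bzero_k,\th^\bzero_k)(\pm1)=0$, so $\Ga^\pm(\la^\bzero_k,\th^\bzero_k,0)=0$, and as $\la^\bzero_k\ge\La_s\ge\La_3$ Proposition~\ref{nu_Ga_zero_zero_pos.prop} gives $\pm\,\Ga^\pm_\la(\la^\bzero_k,\th^\bzero_k,0)\Ga^\pm_\th(\la^\bzero_k,\th^\bzero_k,0)>0$, i.e. the two inequalities hold at $\bal=\bzero$, so $J(\la^\bzero_k,\th^\bzero_k,\bzero)\ne0$. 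To obtain the same conclusion for general $\bal\in\As$ I would note that, because $w(\la^\bzero_k,\th^\bzero_k)(\pm1)=0$, in
$$
\Ga^\pm_\la(\la^\bzero_k,\th^\bzero_k,\al^\pm)=w_\la(\pm1)-\sum_{i=1}^{m^\pm}\al_i^\pm w_\la(\eta_i^\pm),
\qquad
\Ga^\pm_\th(\la^\bzero_k,\th^\bzero_k,\al^\pm)=w_\th(\pm1)-\sum_{i=1}^{m^\pm}\al_i^\pm w_\th(\eta_i^\pm)
$$
(all $w$'s evaluated at $(\la^\bzero_k,\th^\bzero_k)$) the perturbation sums are small: the $\eta_i^\pm$-terms are controlled by Lemma~\ref{est_v.lem} and \eqref{est_u_ud.eq} (so $\sum_i\al_i^\pm w_\la(\eta_i^\pm)$ is $O\big(|\al^\pm|(\la^\bzero_k)^{-1/2}\big)$ and $\sum_i\al_i^\pm w_\th(\eta_i^\pm)$ is $O(|\al^\pm|)$), while \eqref{wwth}, \eqref{wwla}, \eqref{estuu} and \eqref{est_u_ud.eq} (available since $\la^\bzero_k\ge\La_3$) give a lower bound on $|w_\la(\pm1)|$ of order $(\la^\bzero_k)^{-1/2}$ and a lower bound on $|w_\th(\pm1)|$ by a positive constant. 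Hence, for $a_s$ small enough the leading terms dominate, $\Ga^\pm_\la$ and $\Ga^\pm_\th$ keep the signs of $w_\la(\pm1)$ and $w_\th(\pm1)$, and Lemma~\ref{signla.lem} again yields $\Ga^+_\la\Ga^+_\th>0$ and $\Ga^-_\la\Ga^-_\th<0$, so $J(\la^\bzero_k,\th^\bzero_k,\bal)\ne0$.

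The only real work is the constant-chasing in this last step of part~(a): verifying that the $(\la^\bzero_k)^{-1/2}$ lower bound on $|w_\la(\pm1)|$ coming from \eqref{wwla} and \eqref{estuu} genuinely beats the perturbation $\sum_i\al_i^\pm w_\la(\eta_i^\pm)$ once $a_s$ is small, and that the smallness requirement on $a_s$ forced here is compatible with the constraints $|\al^\pm|<a_1$ and $|\al^\pm|<\min\{a_1,\tfrac12\}$ already imposed via Lemmas~\ref{single_bc_u_simple.lem} and~\ref{Ga_simple_zeros.lem}. This is routine, since shrinking $a_s$ or enlarging $\La_s$ never invalidates an earlier estimate; the determinant manipulation itself is trivial.
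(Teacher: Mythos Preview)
Your argument is correct, and for part~(b) it is exactly what the paper intends: the sign conditions from Proposition~\ref{nu_Ga_zero_pos.prop} force the two diagonal products in $J$ to have opposite signs, so the determinant cannot vanish. The paper gives no proof beyond the sentence ``The sign properties \dots\ now yield the following results,'' and your determinant manipulation is the natural way to unpack that.

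For part~(a) you do noticeably more than the paper. The paper treats~(a) as equally immediate from Propositions~\ref{nu_Ga_zero_zero_pos.prop} and~\ref{nu_Ga_zero_pos.prop}; this is straightforward at $\bal=\bzero$ (where $\Ga^\pm(\la^\bzero_k,\th^\bzero_k,0)=w(\pm1)=0$, so Proposition~\ref{nu_Ga_zero_zero_pos.prop}, or equally Proposition~\ref{nu_Ga_zero_pos.prop}, applies and your determinant argument finishes). You instead take the stated conclusion $J(\la^\bzero_k,\th^\bzero_k,\bal)\ne0$ at face value for arbitrary $\bal\in\As$ and supply a perturbation argument: bound the $\al$-sums in $\Ga^\pm_\la$, $\Ga^\pm_\th$ using Lemma~\ref{est_v.lem} and \eqref{est_u_ud.eq}, bound the leading terms $w_\la(\pm1)$, $w_\th(\pm1)$ from below via \eqref{wwth}--\eqref{wwla} and \eqref{estuu}, and then shrink $a_s$ so the signs persist. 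This is sound, and is genuinely needed if~(a) is read with general $\bal$, since $\Ga^\pm(\la^\bzero_k,\th^\bzero_k,\al^\pm)\ne0$ in general and neither proposition applies directly. In the paper's actual development, however, only the case $\bal=\bzero$ of~(a) (and part~(b)) is ever used in the continuation argument, so your extra work, while correct, is not strictly required.
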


The following lemma will be useful to separate `small'
and `large' eigenvalues.

\begin{lemma}  \label{s_bdd.lem}
There exists an integer $k_s$ and a constant $\ka > 0$
such that if $\la$ is an eigenvalue with eigenfunction $u \in T_k$ with
$k \ge k_s$ then $\La_s + 1 \le \la \le \ka k^2$.
\end{lemma}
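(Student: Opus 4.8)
The plan is to prove the two bounds $\La_s+1\le\la$ and $\la\le\ka k^2$ more or less independently, and then let $k_s$ be large enough that both conditions can be met simultaneously. For the lower bound, recall that any eigenfunction lies in some $T_k$ (Lemma~\ref{efun_in_T.lem}), and $u\in T_k$ has exactly $k$ simple zeros of $u'$ in $(-1,1)$, with a zero of $u$ between consecutive ones, so $u$ oscillates more and more as $k$ grows. Using the energy estimates of Lemma~\ref{en_est.lem} (in particular \eqref{u_ud_bounds.eq}), together with the standard Sturm-type comparison on $(-1,1)$, one sees that the number $k$ of critical points of $u$ is bounded above by a quantity of the form $C(r)\la^{1/2}+O(1)$: each half-oscillation of $w(\la,\th)$ consumes an amount of "phase" bounded below by a positive multiple of $\la^{-1/2}$ (this is essentially the content of the constant-coefficient picture in Section~2.4, transported via $c_{\min},c_{\max}$). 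Inverting this, $\la\ge c\, k^2$ for some $c=c(r)>0$ and all $k$; in particular $\la\to\infty$ as $k\to\infty$, so choosing $k_s$ large enough forces $\la\ge\La_s+1$. First I would make this oscillation-counting estimate precise using the function $\Ga^\pm$ and the fact that along a solution the pair $(w,w')$ (suitably rescaled) rotates; the rotation speed is controlled above and below by $\la^{1/2}$ times constants depending only on $r$.

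For the upper bound $\la\le\ka k^2$, the idea is the reverse inequality: because $u\in T_k$, $u$ must oscillate at least $k$ times on $(-1,1)$, and by the same energy/phase estimates each full oscillation requires phase increment at most a positive multiple of $\la^{-1/2}$ (bounded using $r_{\min}$, $c_{\min}$), so $k$ half-oscillations fit in the interval of length $2$ only if $k\le C'(r)\la^{1/2}+O(1)$, i.e. $\la\ge$ (something)$\cdot k^2$ — wait, that is again a lower bound. To get the genuine upper bound $\la\le\ka k^2$ I would instead argue: the eigenfunction $u$ has its critical points $-1<x_1<\dots<x_k<1$, and between $x_j$ and $x_{j+1}$ lies a zero of $u$; on such a subinterval $u$ solves $-u''=\la r u$ with $u$ vanishing somewhere inside and $u'$ vanishing at both ends, which by Lemma~\ref{en_est.lem} forces the subinterval to have length \emph{at most} $C''(r)\la^{-1/2}$ (a half-period upper bound, since the phase must increase by at least $\pi$-ish between consecutive critical points, and phase increases at rate at most $(\la r_{\max}c_{\max})^{1/2}/(\text{lower energy bound})^{1/2}\sim\la^{1/2}$ — so actually the length is bounded \emph{below}). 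The clean way out is: consecutive zeros of $u'$ are separated by \emph{at least} $c/\la^{1/2}$ (phase rate bounded above), giving $\la\le\ka k^2$ directly, since $k$ gaps each $\ge c\la^{-1/2}$ must fit in length $2$, so $k\cdot c\la^{-1/2}\le 2$, i.e. $\la\le (ck/2)^{-2}\cdot\ldots$ — rearranging, $\la\ge (ck/2)^2$; that is the lower bound. The upper bound comes from the \emph{other} direction: consecutive critical points are separated by \emph{at most} $C\la^{-1/2}$ (phase rate bounded below by $(\la r_{\min}c_{\min})^{1/2}$ over $(r_{\max}c_{\max}/r_{\min})^{1/2}\sim\la^{1/2}$), and since there are $k$ of them in an interval of fixed length, and they can't all be crammed arbitrarily sparsely, one compares with the first eigenvalue: actually the cleanest route is Courant–Fischer / comparison with the separated Dirichlet eigenvalues $\la_k^\bzero$, which are known (Lemma~\ref{Ga_zeros_at_zero.lem}) to satisfy $\la_k^\bzero\le \mathrm{const}\cdot k^2$; since $u\in T_k$ has $k$ internal critical points and the intervening zeros partition $(-1,1)$, a direct Sturm comparison of each nodal subinterval with a Dirichlet problem on a subinterval, using that $r$ is bounded above and below, yields $\la\le\ka k^2$ with $\ka=\ka(r)$.

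So in order: (1) fix the phase-rate bounds — along any nontrivial solution of \eqref{eval_de.eq}, writing in Prüfer-type variables, the phase derivative lies between $c_-(r)\la^{1/2}$ and $c_+(r)\la^{1/2}$, both positive constants depending only on $r$ (this follows from \eqref{u_ud_bounds.eq} and \eqref{est_u_ud.eq}); (2) from the upper phase-rate bound, consecutive zeros of $u'$ in $(-1,1)$ are $\ge \pi/(c_+(r)\la^{1/2})$ apart, so $k$ of them fit in $(-1,1)$ only if $k\le 2c_+(r)\la^{1/2}/\pi+1$, giving $\la\le\ka k^2$ for a suitable $\ka(r)$ once $k$ is large; (3) from the lower phase-rate bound, the same $k$ critical points force $\la\to\infty$ as $k\to\infty$, so pick $k_s$ with $\la\ge\La_s+1$ whenever $k\ge k_s$; (4) combine. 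The main obstacle is step (1)/(2): making the Prüfer-phase comparison fully rigorous for the \emph{variable}-coefficient equation and checking that the constants are genuinely uniform in $\bal$ (they are, since the bounds come only from \eqref{u_ud_bounds.eq}, which does not see $\bal$ at all — the boundary conditions only enter via Lemma~\ref{efun_in_T.lem} to guarantee $u\in T_k$ in the first place). Everything else is bookkeeping with the constants $c_{\min},c_{\max},r_{\min},r_{\max}$ already introduced.
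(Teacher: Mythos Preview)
Your overall strategy---Pr\"ufer-type phase estimates, which are equivalent to the Sturm comparison theorem---is exactly what the paper invokes, in a single sentence. However, in your final summary you have the conclusions of the two phase-rate bounds swapped. In step~(2), the inequality $k\le 2c_+(r)\la^{1/2}/\pi+1$ that you correctly derive rearranges to $\la\ge \big(\pi(k-1)/(2c_+(r))\big)^2$, which is a \emph{lower} bound on $\la$; this is what forces $\la\to\infty$ as $k\to\infty$ and hence gives $\la\ge\La_s+1$ for $k\ge k_s$. Conversely, the lower phase-rate bound in your step~(3) yields a total phase increase of at least $2c_-(r)\la^{1/2}$ over $[-1,1]$, hence at least $2c_-(r)\la^{1/2}/\pi-1$ zeros of $u'$ in $(-1,1)$, so $k\ge 2c_-(r)\la^{1/2}/\pi-1$ and therefore $\la\le\ka k^2$; this is the \emph{upper} bound. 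You actually caught this once mid-proposal (``rearranging, $\la\ge(ck/2)^2$; that is the lower bound'') but then reverted to the wrong assignment at the end. With the swap corrected the argument is complete and is precisely the Sturm comparison the paper cites.
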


\begin{proof}
This follows immediately by applying the Sturm comparison theorem
to the differential equation \eqref{eval_de.eq}.
\end{proof}

Now suppose that
$(\la,\th,\bal) \in (\La_s,\infty) \X S^1 \X \As$
is an arbitrary solution of \eqref{simul_zeros.eq},
with $w(\la,\th)\in T_{k}$ for some $k \ge k_s$.
By Corollary~\ref{jac.cor} and the implicit function theorem,
there exists a maximal open interval $\tN$ containing $1$ and a $C^1$
solution function
$$
t \to (\tla(t),\tth(t)) : \tN \to (0,\infty) \X S^1,
$$
such that
$$
(\tla(1),\tth(1)) = (\la,\th),
\quad \Ga^\pm(\tla(t),\tth(t),t\al^\pm) = 0 ,
\quad t \in \tN .
$$
Also, it follows immediately from Lemma~\ref{efun_in_T.lem}  and
continuity that
\begin{equation}  \label{kz_defn.eq}
w(\tla(t),\tth(t)) \in T_{k}  , \quad t \in \tN .
\end{equation}
Furthermore, Lemma~\ref{s_bdd.lem} and \eqref{kz_defn.eq} show that
$$
\La_s + 1 \le \tla(t) \le \ka k^2, \quad t \in \tN,
$$
and combining this estimate with Corollary~\ref{jac.cor} and
a standard continuation argument
(similar to the argument in the proof of \cite[Lemma 4.8(b)]{GR})
shows that, since $\tN$ is a maximal interval,
in fact $[0,1] \subset \tN$.

\medskip

Hence, the above arguments have shown that,
for any  $\bal \in \As$ and  $k \ge k_s$:
\begin{mylist}
\item[$C$-$(a)$]
if
$(\la,\th,\bal) \in (\La_s,\infty) \X S^1 \X  \As$
is a solution of \eqref{simul_zeros.eq}
with $w(\la,\th) \in T_k$,
then $(\la,\th,\bal)$ can be continuously connected to the solution
$(\la_k^\bzero,\th_k^\bzero,\bzero)$.
\end{mylist}
Similar arguments show that:
\begin{mylist}
\item[$C$-$(b)$]
any  solution
$(\la_k^\bzero,\th_k^\bzero,\bzero)$, $k \ge k_s$,
can be continuously connected to exactly one solution
of \eqref{simul_zeros.eq} in $(\La_s,\infty) \X S^1 \X \As$,
say
$(\la_k(\bal),\th_k(\bal),\bal) $.
\end{mylist}

We now extend these properties to the full set of solutions,
for sufficiently small $|\al^\nu|$.

\begin{prop}  \label{nu_Ga_zero_pos2.prop}
There exists $\ga\in(0,a_s]$ such that if
$\bal \in \A_{\ga}$ then the properties $C$-$(a)$ and $C$-$(b)$
hold for all $k \ge 1$.
\end{prop}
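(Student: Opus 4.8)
The strategy is to handle the finitely many `small' eigenvalues $\la_1^\bzero,\dots,\la_{k_s-1}^\bzero$ (those with $w(\la_k^\bzero,\th_k^\bzero)\in T_k$, $k<k_s$) by a direct application of the implicit function theorem at each point $(\la_k^\bzero,\th_k^\bzero,\bzero)$, and then take $\ga$ to be the minimum of $a_s$ and the (finitely many) radii produced this way. First I would note that, by Lemma~\ref{Ga_zeros_at_zero.lem} and Lemma~\ref{single_bc_simple.lem}, each $(\la_k^\bzero,\th_k^\bzero,\bzero)$ is an isolated solution of \eqref{simul_zeros.eq} at $\bal=\bzero$; the point is to show that the Jacobian $J(\la_k^\bzero,\th_k^\bzero,\bzero)$ of the map $(\la,\th)\mapsto(\Ga^-(\la,\th,\al^-),\Ga^+(\la,\th,\al^+))$ is non-zero even for the small $k$. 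For such $k$ we may no longer be able to invoke Lemma~\ref{signla.lem}, so I would argue directly: since $\bal=\bzero$ we have $w(\la_k^\bzero,\th_k^\bzero)(\pm1)=0$, hence $\Ga^\pm_\la=w_\la(\pm1)$ and $\Ga^\pm_\th=w_\th(\pm1)$, and the product $w'(\pm1)w_\th(\pm1)=(\la_k^\bzero r(0))^{1/2}\ne0$ holds for all $\la>0$ by \eqref{wwth}, so $\Ga^\pm_\th\ne0$; combined with the fact that the separated Dirichlet problem has one-dimensional eigenspaces and simple eigenvalues (so $(\la_k^\bzero,\th_k^\bzero)$ is a transverse zero of the pair), $J(\la_k^\bzero,\th_k^\bzero,\bzero)\ne0$. (If a cleaner route is wanted: the $2\times2$ matrix has the rotation form appearing in Lemma~\ref{single_bc_simple.lem}, so non-degeneracy of $J$ at $\bzero$ reduces to the statement that $\la_k^\bzero$ is a simple Dirichlet eigenvalue, which is classical.)

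Having established $J(\la_k^\bzero,\th_k^\bzero,\bzero)\ne0$ for each $k=1,\dots,k_s-1$, the implicit function theorem gives, for each such $k$, a radius $\ga_k\in(0,a_s]$ and a $C^1$ function $\bal\mapsto(\la_k(\bal),\th_k(\bal))$ on $\A_{\ga_k}$ with $(\la_k(\bzero),\th_k(\bzero))=(\la_k^\bzero,\th_k^\bzero)$ solving \eqref{simul_zeros.eq}, and with $J(\la_k(\bal),\th_k(\bal),\bal)\ne0$ throughout (shrinking $\ga_k$ if needed). Shrinking further, continuity and Lemma~\ref{efun_in_T.lem} (the sets $T_k$ being open and the $T_k^\pm$ partitioning a neighbourhood) ensure $w(\la_k(\bal),\th_k(\bal))\in T_k$ for $\bal\in\A_{\ga_k}$; this is the local half of $C$-$(b)$ for small $k$, and the path $t\mapsto(\la_k(t\bal),\th_k(t\bal),t\bal)$ is the required continuous connection to $(\la_k^\bzero,\th_k^\bzero,\bzero)$. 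For the $C$-$(a)$ direction with small $k$: if $(\la,\th,\bal)$ with $|\al^\pm|<\ga_k$ solves \eqref{simul_zeros.eq} and $w(\la,\th)\in T_k$, one runs the same continuation-in-$t$ argument as in the large-$k$ case, but now using $J\ne0$ along the path (valid by shrinking $\ga_k$ so that the IFT neighbourhood is a tube around $\{(\la_k(t\bal),\th_k(t\bal),t\bal):t\in[0,1]\}$, together with the $T_k$-membership being preserved); uniqueness of the zero of $\Ga(\cdot,0)$ in $[0,\pi)$ and simplicity of $\la_k^\bzero$ then force the path to terminate at $(\la_k^\bzero,\th_k^\bzero,\bzero)$, so $(\la,\th,\bal)$ coincides with $(\la_k(\bal),\th_k(\bal),\bal)$.

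Finally I would set $\ga:=\min\{a_s,\ga_1,\dots,\ga_{k_s-1}\}>0$. For $\bal\in\A_\ga\subset\As$, properties $C$-$(a)$ and $C$-$(b)$ already hold for all $k\ge k_s$ by the continuation argument preceding the proposition, and they now hold for $1\le k<k_s$ by the previous paragraph; this is exactly the assertion.

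\medskip
\noindent\textbf{Main obstacle.} The delicate point is the small-$k$ case, where the uniform estimates of Section~\ref{prelim.sec} (Lemmas~\ref{int_est.lem}, \ref{est_v.lem}, \ref{signla.lem}) all require $\la$ bounded below by thresholds that may exceed $\la_k^\bzero$; consequently the clean sign condition $\nu\,\Ga^\nu_\la\Ga^\nu_\th>0$ need not be available, and one cannot directly cite Proposition~\ref{nu_Ga_zero_pos.prop}. What saves the argument is that for the small eigenvalues we do not need a $\ga$ independent of $k$—there are only finitely many of them—so a bare non-degeneracy statement $J\ne0$ at $\bzero$, coming from classical simplicity of the separated Dirichlet spectrum, plus a local IFT and a shrinking argument, suffices. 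Care is needed to make the IFT neighbourhoods tubular around the connecting paths so that both $C$-$(a)$ and $C$-$(b)$—existence/uniqueness in both directions—fall out, and to keep the $T_k$-membership (an open condition, by Lemma~\ref{efun_in_T.lem}) under control along the way.
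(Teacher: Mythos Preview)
Your overall plan---apply the implicit function theorem at the finitely many points $(\la_k^\bzero,\th_k^\bzero,\bzero)$, $1\le k<k_s$, and intersect the resulting neighbourhoods with $\As$---matches the paper's, but the key non-degeneracy step is handled differently. The paper does not appeal to classical simplicity; instead it replaces $w$ by a solution $\tw(\la,\th)$ with $\la$-independent initial data $\tw(0)=\sin\th$, $\tw'(0)=\cos\th$, and correspondingly defines $\tGa^\pm$. With this choice the right-hand side of the analogue of \eqref{wwth} becomes $1$ and the second term on the right of the analogue of \eqref{wwla} vanishes, so Lemma~\ref{signla.lem} (and hence Proposition~\ref{nu_Ga_zero_zero_pos.prop}) holds for \emph{every} $\la>0$; non-degeneracy of the Jacobian at $\bzero$ then follows at once, and the earlier machinery applies to each of the finitely many small-$k$ points.

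Your route is also viable, but the justification you give is not quite a proof: ``simplicity'' of $\la_k^\bzero$ does not by itself explain why $J\ne 0$ in this particular $(\la,\th)$-parametrization via $w$, and the ``rotation form'' of Lemma~\ref{single_bc_simple.lem} concerns only the $\th$-derivative. What actually makes it work is that when one substitutes \eqref{wwth} and \eqref{wwla} into $J=w_\la(-1)w_\th(1)-w_\th(-1)w_\la(1)$, the troublesome $\tfrac12\la^{-1/2}r(0)^{1/2}\sin\th\cos\th$ terms cancel, leaving
\[
J(\la_k^\bzero,\th_k^\bzero,\bzero)=-\frac{(\la_k^\bzero r(0))^{1/2}}{w'(-1)\,w'(1)}\int_{-1}^{1}r\,w^2\ne 0.
\]
With this computation in hand your argument goes through. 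Your backward-continuation for $C$-$(a)$ at small $k$ is also slightly circular---running the path from an arbitrary solution requires $J\ne 0$ there, which has not been established off the IFT branch---but this is easily repaired by compactness: solutions with $w\in T_k$ are a priori confined to a compact $(\la,\th)$-region (Lemmas~\ref{labaway.lem} and~\ref{s_bdd.lem}), and at $\bal=\bzero$ the solution in $T_k$ is unique, so for $\ga_k$ small enough any such solution already lies in the IFT uniqueness neighbourhood.
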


\begin{proof}

For any $\la > 0$ and  $\th \in \R$ we now let
$\tw(\la,\th) \in C^2(\R)$  denote the solution of
\eqref{eval_de.eq}
satisfying the initial conditions
\begin{equation}  \label{tw_soln_form.eq}
\tw(\la,\th)(0) = \sin \th,
\quad
\tw(\la,\th)'(0) =  \cos \th ,
\end{equation}
and define functions
$\tGa^\pm$ as in \eqref{defofGa}, but using $\tw$ instead of $w$.
Analogously to equation~\eqref{simul_zeros.eq}, eigenvalues of
\eqref{mp_eval.eq} also correspond to solutions of the pair of equations
$\tGa^\pm = 0$.
However, the fact that the initial conditions for $\tw$ do not
depend on $\la$ now allows us to establish the
analogue of Lemma~\ref{signla.lem},
and hence Proposition~\ref{nu_Ga_zero_zero_pos.prop},
for all $\la > 0$
(in this case the right hand side of \eqref{wwth}  becomes 1, while the
second term on the right hand side of \eqref{wwla}  becomes 0).
Hence, we may now follow the previous argument to show that we can apply
the implicit function theorem to the equations $\tGa^\pm = 0$
at each of the points
$(\la_k^\bzero,\th_k^\bzero,\bzero)$, $1 \le k < k_s$.
Combining this with the above results for $k \ge k_s$  then
yields a common `continuation domain' $\A_{\ga} \subset \As$
for all the eigenvalues.
\end{proof}

\begin{remark}
We used the function $w$ in the continuation argument to
obtain the `large' eigenvalues in the interval $(\La_s,\infty)$ since
this function yielded uniform estimates in Section~\ref{soln_ests.sec}
for all large $\la$,
based on the equation \eqref{E0.eq}.
Unfortunately, although the initial conditions for $\tw$
(and, more particularly, for $\tw_\la$) are simpler than those for $w$,
the analogue of \eqref{E0.eq} for $\tw$ is
$$
E(\la,\tw(\la,\th))(0) = \cos^2 \th + \la r(0) \sin^2 \th ,
\quad \th \in [0,2\pi] ,
$$
which would not yield uniform estimates for large $\la$.
\end{remark}

We conclude from the above results that, for all $k \ge 1$ and
$\bal\in \A_{\ga}$,
there is an eigenvalue $\la_k(\bal)$ with corresponding eigenfunction
$ u_k(\bal) := w(\la_k(\bal),\th_k(\bal))  \in T_k $
(or
$ u_k(\bal) := \tw(\la_k(\bal),\th_k(\bal))$
when $1 \le k < k_s$),
and there is no eigenvalue $\hat \la \ne \la_k(\bal)$
having an eigenfunction $\hat u \in T_k$.
Also, by Lemma~\ref{Ga_zeros_at_zero.lem},
$\la_k^\bzero < \la_{k+1}^\bzero$,
and by Theorem~\ref{single_bc.thm},
$\la_k(\bal) \ne \la_{k+1}(\bal)$
for any $\bal \in \A_{\ga}$,
so it follows from the continuation construction that
$\la_k(\bal) < \la_{k+1}(\bal)$
for all $\bal \in \A_{\ga}$.
The fact that $\la_k(\bal)$  has algebraic multiplicity equal to 1
(and so is simple) will be proved in Lemma~\ref{alge_mult.lem} below.

Finally, for fixed $\bal$,
the fact that $u_k(\bal) \in T_k$, for $k \ge 1$,
shows that as $k \to \infty$ the oscillation count tends to $\infty$,
so by standard properties of the differential equation
\eqref{eval_de.eq}, we must have
$\lim_{k \to\infty} \la_k = \infty$.
This concludes the proof of Theorem~\ref{spec.thm}.
\hfill $\Box$


\subsection{Continuity of the eigenvalues and eigenfunctions}
\label{continuity.sec}

The implicit function theorem construction of $\la_k$ and $u_k$ in the
proof of Theorem~\ref{spec.thm} also implies continuity properties
which will be useful below, so we state these in the following corollary
(continuity of $u_k$ will be in the space $C^0[-1,1]$, although stronger
results could easily be obtained).

\begin{cor}  \label{cts_evals.cor}
For each $k \ge 1$, $\la_k \in \R$ and $u_k \in C^0[-1,1]$ depend
continuously on
$(\bal,\bfeta) \in \A_{\ga} \X (-1,1]^{m^-} \X [-1,1)^{m^+}$.
\end{cor}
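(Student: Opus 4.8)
The strategy is to extract continuity directly from the implicit function theorem construction that produced $\la_k$ and $u_k$ in the proof of Theorem~\ref{spec.thm}, taking care that the dependence on the node parameters $\bfeta$ is handled on the same footing as the dependence on $\bal$. First I would observe that, in the definition \eqref{defofGa} of $\Ga^\pm$ (and similarly for $\tGa^\pm$), the functions $w(\la,\th)$ depend $C^1$-smoothly on $(\la,\th)$ by standard ODE theory, and the evaluation maps $v \mapsto v(\eta_i^\pm)$ depend continuously — indeed smoothly — on $\eta_i^\pm \in [-1,1]$. Hence $\Ga^\pm$, viewed as a function of $(\la,\th,\al^\pm,\eta^\pm)$, is jointly $C^1$ in all its arguments on the relevant domain; the same holds for the Jacobian determinant $J$. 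The point is that nowhere in Section~\ref{evals.sec} was the fixedness of $\bfeta$ actually used in the transversality estimates: Lemmas~\ref{single_bc_simple.lem}, \ref{Ga_simple_zeros.lem} and Propositions~\ref{nu_Ga_zero_zero_pos.prop}, \ref{nu_Ga_zero_pos.prop}, \ref{nu_Ga_zero_pos2.prop} give sign conditions on $\Ga^\pm_\la \Ga^\pm_\th$ that are uniform over $\bal \in \A_\ga$ and over all admissible $\bfeta$ (with $\eta_i^+ \ne 1$, $\eta_i^- \ne -1$). So the nonvanishing of $J$ at solution points of \eqref{simul_zeros.eq}, on which the whole construction rests, persists with $\bfeta$ as an extra parameter.

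Next I would rerun the continuation argument of Section~\ref{proof_spec_thm.sec} with $(\bal,\bfeta)$ in place of $\bal$. Fix a base point $(\bal_0,\bfeta_0)$ in the domain $\A_\ga \X (-1,1]^{m^-} \X [-1,1)^{m^+}$ and the corresponding solution $(\la_k(\bal_0,\bfeta_0),\th_k(\bal_0,\bfeta_0))$ of \eqref{simul_zeros.eq}. Because $J$ is nonzero there (Corollary~\ref{jac.cor} together with Proposition~\ref{nu_Ga_zero_pos2.prop}, now in the enlarged parameter space), the implicit function theorem gives a $C^1$ — in particular continuous — local solution function $(\bal,\bfeta) \mapsto (\la_k(\bal,\bfeta),\th_k(\bal,\bfeta))$ near $(\bal_0,\bfeta_0)$, and Lemma~\ref{efun_in_T.lem} plus continuity keep the eigenfunction in $T_k$ so that the local solution branch really is the $k$-th eigenvalue. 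Then $u_k(\bal,\bfeta) = w(\la_k(\bal,\bfeta),\th_k(\bal,\bfeta))$ (or $\tw(\cdots)$ for $1 \le k < k_s$), and since $(\la,\th,\eta) \mapsto w(\la,\th)$ composed with evaluation is continuous into $C^0[-1,1]$, the composition is continuous in $(\bal,\bfeta)$; uniqueness of the eigenvalue with eigenfunction in $T_k$ (established in the proof of Theorem~\ref{spec.thm}) guarantees the local pieces agree on overlaps, so $\la_k$ and $u_k$ are globally well-defined continuous functions on the stated domain.

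The one genuinely delicate point — and the step I would expect to be the main obstacle — is the behaviour as some $\eta_i^+ \to 1$ or $\eta_i^- \to -1$, which is precisely why the domain in the corollary is the half-open cube $(-1,1]^{m^-} \X [-1,1)^{m^+}$ rather than $[-1,1]^{m^-} \X [-1,1]^{m^+}$. One must check that all the uniform constants in Section~\ref{prelim.sec}--\ref{evals.sec} ($a_1$, $a_s$, $\La_s$, $\ga$, $k_s$, $\ka$) can be chosen independently of $\bfeta$ as $\bfeta$ ranges over any compact subset avoiding the forbidden endpoints; inspection shows they depend on $r$ and on $|\al^\pm|$ only, through the Lyapunov bounds of Lemma~\ref{en_est.lem}, so this is fine. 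If one wanted continuity up to the closed cube one would have to rule out a coefficient $\al_i^+$ attached to a node $\eta_i^+$ collapsing onto the boundary point $1$, where $\Ga^+$ would degenerate; but since the corollary explicitly excludes those configurations, I would simply remark that on any compact $K \subset (-1,1]^{m^-} \X [-1,1)^{m^+}$ the construction is uniform, which upgrades the pointwise continuity to local uniform continuity and completes the proof.
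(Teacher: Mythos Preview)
Your proposal is correct and follows essentially the same approach as the paper, which simply remarks that the implicit function theorem construction of $\la_k$ and $u_k$ in the proof of Theorem~\ref{spec.thm} automatically yields the continuity, without spelling out details. Your elaboration---treating $\bfeta$ as an additional parameter in $\Ga^\pm$, noting that the transversality and Jacobian conditions are uniform in $\bfeta$, and invoking the implicit function theorem locally---is exactly the unpacking of that remark.
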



\subsection {Algebraic multiplicity  and topological degree}
\label{alg_mult.sec}

By Theorem~\ref{De_inverse.thm}, for $|\al^{\pm}|<1$
we may define a compact linear operator
$K_r : Y \to Y$ by $K_r u := -\De^{-1}(ru)$, $u \in Y$,
and then the eigenvalue problem \eqref{mp_eval.eq} is equivalent to the
problem
\begin{equation}  \label{K_la_u.eq}
u = \la K_r u,  \quad u \in Y .
\end{equation}
In particular, for $\bal\in \A_{\ga}$, the eigenvalues $\la_k(\bal)$
obtained in Theorem~\ref{spec.thm} are the characteristic values of
$K_r$,
so we may define the algebraic multiplicity of $\la_k(\bal)$ to be
the algebraic multiplicity of $\la_k(\bal)$ as a characteristic value
of $K_r$, that is
$$
\m(\la_k(\bal))
 :=\dim \bigcup_{l=1}^\infty N\big((I_Y-\la_k(\bal) K_r)^l\big),
$$
where $N$ denotes null-space and $I_Y$ is the identity on $Y$.

\begin{lemma}  \label{alge_mult.lem}
Suppose that $\bal\in \A_{\ga}$.
Then $\m(\la_k(\bal))=1$,  $k \ge 1$.
\end{lemma}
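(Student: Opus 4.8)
The plan is to show that $\la_k(\bal)$ has geometric multiplicity one and then upgrade this to algebraic multiplicity one by ruling out generalised eigenfunctions. The geometric simplicity is essentially already in hand: by Theorem~\ref{single_bc.thm}, with $\la=\la_k(\bal)$ fixed and regarding the single boundary condition $u(1)=\sum\al_i^+u(\eta_i^+)$ (using $|\al^+|<\ga\le a_s\le a_1$), the solution set of \eqref{single_bc_de.eq}, \eqref{single_bc.eq} is $1$-dimensional, hence $N(I_Y-\la_k K_r)$ is spanned by $u_k(\bal)$. (Alternatively one sees this directly: any eigenfunction for $\la_k$ lies in some $T_j$ by Lemma~\ref{efun_in_T.lem}, and by the uniqueness part of the continuation construction the only eigenvalue with an eigenfunction in $T_k$ is $\la_k$, while $\la_j\ne\la_k$ for $j\ne k$; so $j=k$ and the eigenspace is one-dimensional.)

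The substantive step is passing from geometric to algebraic multiplicity, i.e. showing $N\big((I_Y-\la_k K_r)^2\big)=N(I_Y-\la_k K_r)$. Suppose $v\in Y$ satisfies $(I_Y-\la_k K_r)^2 v=0$ but $(I_Y-\la_k K_r)v=c\,u_k\ne 0$ for some $c\ne0$; rescaling, assume $c=1$. Unwinding $K_r=-\De^{-1}(r\,\cdot\,)$, this says $v\in X$ (so $v$ satisfies \eqref{dbc.eq}) and
\begin{equation*}
-v'' = \la_k r v + \la_k r u_k \quad\text{on }(-1,1).
\end{equation*}
The plan is to derive a contradiction via a Fredholm/solvability (Lagrange identity) argument. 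Using Lemma~\ref{lag.lem} with $u=u_k$, $z=v$, and $v$ there replaced by $u_k$ (consistently scaled), one gets $\la_k\int_0^x u_k^2 r = [u_k' v - u_k v']_0^x$. Evaluating at $x=\pm1$ and using the boundary conditions \eqref{dbc.eq} satisfied by both $u_k$ and $v$, together with $u_k\in T_k$ so that $u_k'(\pm1)\ne0$ (Lemma~\ref{efun_in_T.lem}), I expect the boundary terms $[u_k'v-u_kv']_0^{\pm1}$ to combine — after exploiting the multi-point relations — into something forced to vanish (or to have a definite sign incompatible with $\la_k\int_{-1}^1 u_k^2 r>0$). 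Concretely, adding the $+1$ and $-1$ identities and noting $\int_0^{-1}=-\int_{-1}^0$ gives $\la_k\int_{-1}^1 u_k^2 r$ on one side and a combination of $u_k'(\pm1)v(\pm1)-u_k(\pm1)v'(\pm1)$ on the other; one then shows this boundary combination is zero because $u_k$ is an eigenfunction and $v$ obeys the same linear boundary functionals, which contradicts positivity of the integral.

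The main obstacle is that the boundary conditions \eqref{dbc.eq} are nonlocal, so the naive self-adjointness argument (where Dirichlet or separated conditions make $[u_k'v-u_kv']_{-1}^{1}$ vanish term by term) does not apply directly; one must instead use the genuine structure — that both $u_k$ and $v$ kill the functionals $B_{\al^\pm}: z\mapsto z(\pm1)-\sum\al_i^\pm z(\eta_i^\pm)$, and that evaluating the Lagrange identity at the interior points $\eta_i^\pm$ and combining linearly with coefficients $\al_i^\pm$ lets one replace $u_k(\pm1), v(\pm1)$ by the corresponding sums. Carrying this bookkeeping through carefully (and handling the two endpoints symmetrically, as throughout the paper) is where the real work lies; once the boundary terms are shown to cancel, the contradiction with $\int_{-1}^1 u_k^2 r>0$ is immediate, giving $\m(\la_k(\bal))=1$. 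An induction on the power $l$ then shows $N\big((I_Y-\la_k K_r)^l\big)$ stabilises at $l=1$, completing the proof.
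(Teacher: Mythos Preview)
Your geometric-multiplicity step is fine, but the passage from geometric to algebraic simplicity has a genuine gap. The Lagrange/Wronskian argument you outline relies on the boundary bilinear form $[u_k'v-u_kv']_{-1}^{1}$ vanishing whenever both $u_k$ and $v$ satisfy \eqref{dbc.eq}. For separated (self-adjoint) conditions this is automatic, but the multi-point conditions \eqref{dbc.eq} constrain only the \emph{values} $u(\pm1),v(\pm1)$ via the interior values $u(\eta_i^\pm),v(\eta_i^\pm)$; they say nothing about $u'(\pm1),v'(\pm1)$ or about $u'(\eta_i^\pm),v'(\eta_i^\pm)$. Your proposed trick of evaluating the identity at the interior points and recombining with weights $\al_i^\pm$ produces terms like $\sum_i\al_i^\pm\big(u_k'(\eta_i^\pm)v(\eta_i^\pm)-u_k(\eta_i^\pm)v'(\eta_i^\pm)\big)$, which involve derivative values at $\eta_i^\pm$ that are not tied to anything by \eqref{dbc.eq}. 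There is no algebraic cancellation to be had: the problem is genuinely non-self-adjoint, and the Fredholm solvability obstruction you are hoping for simply is not $\int_{-1}^1 r u_k^2$. So the contradiction you sketch does not materialise.

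The paper avoids this entirely by a continuation argument in $\bal$ (citing \cite[Lemma~5.13]{RYN5}): at $\bal=\bzero$ the problem is the standard self-adjoint Dirichlet problem, where $\m(\la_k)=1$ is classical. As $\bal$ varies continuously through $\A_\ga$, the compact operator $K_r$ varies continuously, the characteristic values $\la_k(\bal)$ remain distinct (this is part of Theorem~\ref{spec.thm}), and the total algebraic multiplicity inside a small circle around each $\la_k(\bal)$ is preserved. Hence $\m(\la_k(\bal))=1$ for all $\bal\in\A_\ga$. This sidesteps the non-self-adjointness completely; your direct approach cannot.
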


\begin{proof} The proof follows by the same continuation argument as in
\cite[Lemma~5.13]{RYN5}, starting from $\bal=\bzero$,
where the result is easily verified.
\end{proof}

Now, if $\la$ is not a characteristic value of $K_r$
then the Leray-Schauder degree $\deg(I_Y - \la K_r, B_R,0)$ is well
defined
for any $R>0$,
where $B_R$ denotes the open ball in $Y$ centred at $0$
with radius $R$,
see \cite[Ch. 13]{ZEI}.

\begin{prop}  \label{degree.prop}
Suppose that $\bal\in \A_{\ga}$.
Then, for any $R > 0$,
\[
\deg(I_Y - \la K_r, B_R,0)=
\begin{cases}
1, & \text{if $\la < \la_1,$} \\
(-1)^k, & \text{if $\la\in (\la_k, \la_{k+1}),$ $k \ge 1$.}
\end{cases}
\]
\end{prop}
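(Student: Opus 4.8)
The natural strategy is to combine the standard Leray--Schauder index formula for a compact linear operator with the information already assembled: the complete list of characteristic values $\la_k(\bal)$, $k\ge 1$, of $K_r$ (Theorem~\ref{spec.thm}), together with the fact, just established in Lemma~\ref{alge_mult.lem}, that each has algebraic multiplicity $1$. First I would note that since $I_Y-\la K_r$ is an affine compact vector field whose only zero (when $\la$ is not a characteristic value) is $u=0$, the degree $\deg(I_Y-\la K_r,B_R,0)$ is independent of $R>0$ and equals the Leray--Schauder index $i(I_Y-\la K_r,0)$, which by the classical index theorem (see \cite[Ch.~13]{ZEI}) is $(-1)^{\be(\la)}$, where $\be(\la)$ is the sum of the algebraic multiplicities $\m(\la_j(\bal))$ over all characteristic values $\la_j(\bal)$ lying strictly between $0$ and $1/\la^{-1}$... more precisely, over all $j$ with $\la_j(\bal) < \la$, since the characteristic values of $\la K_r$ in $(0,1)$ correspond to characteristic values $\la_j(\bal)$ of $K_r$ with $\la_j(\bal)<\la$.

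Next I would invoke Lemma~\ref{alge_mult.lem} to replace each $\m(\la_j(\bal))$ by $1$, so that $\be(\la)$ is simply the \emph{number} of eigenvalues $\la_j(\bal)$ below $\la$. Since Theorem~\ref{spec.thm} gives a strictly increasing sequence $0<\la_1<\la_2<\cdots$ exhausting the spectrum, this count is $0$ when $\la<\la_1$ and is exactly $k$ when $\la\in(\la_k,\la_{k+1})$, yielding $\deg = 1$ in the first case and $\deg = (-1)^k$ in the second. For $\la\le 0$ one checks directly (or observes, as in the proof of Theorem~\ref{spec.thm}, that $K_r$ has no nonpositive characteristic values) that the only solution of $u=\la K_r u$ is $u=0$ and a homotopy to the identity gives degree $1$, consistent with the stated formula for $\la<\la_1$.

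The only genuine point requiring care --- and the step I expect to be the main obstacle --- is the justification that the index formula may be applied: one must know that $I_Y-\la K_r$ has \emph{no characteristic values other than the $\la_j(\bal)$ listed}, i.e.\ that the spectrum described in Theorem~\ref{spec.thm} is the full spectrum of $K_r$ as a bounded operator on $Y$, and that the generalized eigenspaces are finite-dimensional with the multiplicities of Lemma~\ref{alge_mult.lem}. This is exactly what the definition of $\m(\la_k(\bal))$ and Theorem~\ref{spec.thm} provide (the eigenvalue problem \eqref{mp_eval.eq} is equivalent to \eqref{K_la_u.eq}, and $K_r$ is compact by Theorem~\ref{De_inverse.thm}), so the argument closes; I would simply remark that the result now follows from the standard index computation for compact linear perturbations of the identity, cf.\ \cite[Ch.~13]{ZEI}, using Theorem~\ref{spec.thm} and Lemma~\ref{alge_mult.lem}.
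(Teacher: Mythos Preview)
Your proposal is correct and follows exactly the paper's approach: the paper's proof is a one-line appeal to Lemma~\ref{alge_mult.lem} together with the Leray--Schauder index theorem (citing \cite[Proposition~14.5]{ZEI}), which is precisely the computation you spell out. Your additional remarks (independence of $R$, the case $\la\le 0$, completeness of the list of characteristic values) are all implicit in that citation and in Theorem~\ref{spec.thm}.
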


\begin{proof}
The result follows from Lemma~\ref{alge_mult.lem} and the
Leray-Schauder index theorem
(see, for example, \cite[Proposition 14.5]{ZEI}).
\end{proof}


\subsection{Positivity of the principal eigenfunction}
\label{positivity.sec}

A slight extension of the above proof of Theorem \ref{spec.thm} also
proves the following result.

\begin{cor}\label{pos_efuns.cor}
$(a)$ \ If each $\al^\pm > 0$,
then $u_1 > 0$ on $[-1,1]$.
\\
$(b)$ \ If either
$\al^- < 0$ or $\al^+ < 0$,
then $u_1$ changes sign on $(-1,1)$.
\end{cor}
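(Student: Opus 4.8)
The plan is to prove part (a) by continuation in $\bal$, exactly as in the proof of Theorem~\ref{spec.thm}, but keeping track of the sign of the eigenfunction $u_1 = u_1(\bal)$. When $\bal = \bzero$ the principal eigenfunction $u_1^\bzero = w(\la_1^\bzero,\th_1^\bzero)$ of the separated Dirichlet problem is (up to sign) strictly positive on $(-1,1)$, with $u_1^\bzero(\pm1) = 0$; since $u_1^\bzero \in T_1^+$, after fixing the sign we have $u_1^\bzero{}'(-1) > 0$ and $u_1^\bzero{}'(1) < 0$. Now let $\bal$ range over the connected set $\A_\ga \cap \{\bal : \al^\pm \ge 0\}$ (or, for part (b), an appropriate connected piece of $\A_\ga$). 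The continuation construction in Section~\ref{proof_spec_thm.sec} produces $u_1(\bal) = w(\la_1(\bal),\th_1(\bal)) \in T_1^+$ depending continuously on $\bal$ (Corollary~\ref{cts_evals.cor}), so with the sign fixed consistently we have $u_1(\bal)'(-1) > 0$ throughout. By the definition of $T_1^+$, $u_1(\bal)'$ has exactly one simple zero $x_* \in (-1,1)$, and $u_1(\bal)$ has a zero strictly between consecutive zeros of $u_1(\bal)'$; since there is only one interior critical point, $u_1(\bal)$ can have at most one interior zero, say at most one point where it vanishes on $(-1,1)$.

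The key extra input needed is to control the boundary values. By Lemma~\ref{max_e_pt.lem}, $|u_1(\pm1)| \le |\al^\pm|\,|u_1|_0$, and more precisely, when $\al^\pm > 0$ each $u_1(\eta_i^\pm)$ entering the boundary condition \eqref{dbc.eq} has a definite relation to the sign of $u_1$. First I would argue that for $\al^\pm \ge 0$ the function $u_1(\bal)$ cannot vanish in $(-1,1)$: since $u_1'(-1) > 0$, $u_1 > 0$ just to the right of $-1$; suppose $u_1$ had an interior zero; then (as it has at most one) $u_1 > 0$ on $(-1,z)$ and $u_1 < 0$ on $(z,1)$ for some $z \in (-1,1)$, and $u_1$ would attain its negative minimum at the unique interior critical point $x_* > z$. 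But then $u_1(1) = \sum \al_i^+ u_1(\eta_i^+)$ with all $\al_i^+ \ge 0$: the points $\eta_i^+ \in [-1,1]$, $\eta_i^+ \ne 1$; those with $\eta_i^+ < z$ give nonnegative contributions, those with $\eta_i^+ > z$ give nonpositive ones, and a sign/magnitude comparison using $|u_1(1)| \le |\al^+|\,|u_1|_0$ together with Lemma~\ref{en_est.lem} (the energy bound forcing $|u_1(1)|$ to be small relative to the interior maximum of $|u_1|$) should rule this out, at least after shrinking $\ga$. An alternative, cleaner route: invoke Corollary~\ref{pos_efuns.cor}-type positivity directly — $u_1(\bal) > 0$ on $(-1,1)$ is equivalent, via $u_1 = \la_1 K_r u_1$ with $\la_1 > 0$ and $r > 0$, to $u_1$ being an eigenfunction of a positive operator; when $\al^\pm \ge 0$ the operator $K_r : Y \to Y$ maps the cone of nonnegative functions into itself (this is where $\al^\pm \ge 0$ is used — the boundary functional $B_{\al^\pm}$ is then a positive functional), so by the continuation the principal eigenfunction stays in the cone, hence $u_1 \ge 0$, and then $u_1 > 0$ on $[-1,1]$ follows from the strong maximum principle applied to $-u_1'' = \la_1 r u_1 \ge 0$ together with the fact (from $T_1^+$) that $u_1$ does not vanish identically on any subinterval. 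This cone-invariance argument is the more robust one and is what I would write up for (a).

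For part (b), suppose $\al^- < 0$ (the case $\al^+ < 0$ is symmetric). If $u_1$ did not change sign, then after fixing signs $u_1 \ge 0$ on $[-1,1]$, and by the argument above $u_1 > 0$ on $[-1,1]$; in particular $u_1(-1) > 0$ and every $u_1(\eta_i^-) > 0$. But the boundary condition $u_1(-1) = \sum_{i=1}^{m^-}\al_i^- u_1(\eta_i^-)$ then forces $u_1(-1)$ to be a sum with at least one strictly negative term and no positive terms (when $\al^- < 0$ componentwise) — more carefully, when $|\al^-| < \ga$ and $\al^- < 0$, the right-hand side is $\le 0$ while the left-hand side is $> 0$, a contradiction. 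Hence $u_1$ must change sign on $(-1,1)$. The only subtlety is that $\al^- < 0$ in the paper's notation means every component $\al_i^- < 0$, so the right-hand side is a nonpositive combination of positive values, giving the contradiction cleanly.

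\medskip
The main obstacle I expect is part (a): establishing that $K_r$ preserves the positive cone when $\al^\pm \ge 0$, and in particular that the inverse $\De^{-1}$ of the multi-point Laplacian is positivity-preserving under these sign and smallness conditions on $\bal$ — this is not automatic for nonlocal boundary conditions and is precisely where the hypothesis $\al^\pm > 0$ (together with $|\al^\pm| < \ga$) must be exploited, likely via the explicit Green's-function-type representation of $\De^{-1}$ from \cite[Theorem 3.1]{RYN5}. Once cone-invariance is in hand, Krein–Rutman / the continuation already done in Theorem~\ref{spec.thm} delivers the conclusion, and part (b) is then a short sign argument.
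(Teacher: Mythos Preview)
Your argument for (b) is correct and is essentially the paper's: if $u_1$ had one sign on $(-1,1)$ the maximum principle would force $u_1 > 0$ there, and then the boundary condition at the endpoint with $\al^\nu < 0$ gives an immediate contradiction.

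For (a) you have taken a much harder road than necessary. Your preferred route via cone invariance of $K_r$ is left incomplete --- you correctly flag positivity of $\De^{-1}$ for the multi-point problem as the obstacle, and it is a real one that would require the Green's-function analysis you allude to. The paper bypasses all of this with a three-line argument using only $u_1 \in T_1^+$ (already proved in Theorem~\ref{spec.thm}): when $\al^\pm > 0$, the boundary condition \eqref{dbc.eq} (read exactly as in Lemma~\ref{max_e_pt.lem}, but now keeping track of signs rather than absolute values) shows that neither a strictly positive global maximum nor a strictly negative global minimum of $u_1$ can occur at $x = \pm 1$; hence if $u_1$ changed sign, both its global max and its global min would lie in $(-1,1)$, giving $u_1'$ at least two interior zeros and contradicting $u_1 \in T_1$. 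Thus $u_1 > 0$ on $(-1,1)$, and then \eqref{dbc.eq} with $\al^\pm > 0$ yields $u_1(\pm 1) > 0$. Your abandoned first route was actually heading toward this, but the claim ``$u_1 > 0$ just to the right of $-1$'' is unjustified (you do not yet know $u_1(-1) \ge 0$), and the sign/magnitude comparison you sketch is both vague and unnecessary --- the key observation you missed is simply that interior extrema of both signs would force two interior critical points.
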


\begin{proof}
If $\al^\pm > 0$ then the proof of Lemma~\ref{max_e_pt.lem} can be
extended to show that $u$ cannot have a strictly positive
global max or strictly negative global min at the
end-points $x= \pm 1$.
Hence, if $\min u_1 < 0 < \max u_1$, then $u_1'$ must have at least two
zeros in $(-1,1)$, contradicting $u_1 \in T_1^+$.
Therefore,  $u_1 > 0$ on $(-1,1)$, and the boundary conditions
\eqref{dbc.eq} now show that $u(\pm1) > 0$, which proves case (a).
Case (b) follows immediately from the boundary conditions
\eqref{dbc.eq}.
\end{proof}


\subsection{Counterexamples}  \label{counterex.sec}

As mentioned in the introduction, in the constant coefficient case with
$r \equiv 1$, Theorem~\ref{spec.thm} holds for all $\bal\in\A_1$.
In this section we give two examples showing that this is not true in
the variable coefficient case.
The first example shows that, for certain  $\bal\in\A_1$ and
$r \in C^1[-1,1]$, `many' eigenvalues of
\eqref{eval_de.eq}-\eqref{dbc.eq} may be `missing'.
\medskip

\noindent{\bf Example 1.}
For $\de \in (0,1/2)$,
we consider equation \eqref{eval_de.eq} with
$$
r_\de(x) :=
\begin{cases}
1,  &  x \in [-\de,\de] ,
\\
4,  &  x \in [-1,-\de) \cup (\de,1] ,
\end{cases}
$$
together with the symmetric boundary conditions
\begin{equation}  \label{ex1_bc.eq}
u( \pm 1) = \frac{\sqrt{3}}{2} u(0).
\end{equation}
We will show that there are no eigenvalues in the interval
$I_\de := [(\tfrac{\pi}{4\de})^2 , (\tfrac{3\pi}{4\de})^2]$.
Since, for small $\de$, the interval $I_\de$ can be arbitrarily long
and an eigenvalue $\la$ corresponds to an oscillation count of `roughly'
$4\la^{1/2}/\pi$
(rigorous estimates can easily be given using the form of $r$ and
the solutions of \eqref{eval_de.eq}),
this result shows that there may be arbitrarily many oscillation counts
$k$ for which there are no corresponding eigenvalues.

We first observe that if $u$ is an eigenfunction corresponding to an
eigenvalue $\la$, then the function
$u_{\mathrm{e}}(x):=\frac12(u(x)+u(-x))$ is an even eigenfunction
corresponding to $\la$,
so it suffices to show that there are no even eigenfunctions
corresponding to eigenvalues $\la \in I_\de$.

We now search for an even solution $u$ of
\eqref{eval_de.eq}, \eqref{ex1_bc.eq},
which we suppose to be normalized to  $u(0) = 1$.
Thus, $u$ must have the form
\begin{equation}  \label{ex1_even_soln.eq}
u(x) =
\begin{cases}
\cos \la^{1/2} x,  &  x \in [0,\de] ,
\\
C_\de \cos 2\la^{1/2} (x-\ep_\de),  &  x \in (\de,1] ,
\end{cases}
\end{equation}
where $C_\de \neq 0$ and $\ep_\de$ are chosen so that
$u\in C^1[-1,1]$.
The Lyapunov function $E(\la,u)$ defined in \eqref{lyap} is piecewise
constant here, with
\begin{alignat*}{10}
u^2 + \frac{1}{\la} (u')^2 &\equiv 1 , & &\text{in $[0,\de]$,}
\\
u^2 + \frac{1}{4\la} (u')^2
&\equiv u(\de)^2 + \frac{1}{4\la} u'(\de)^2
= u(\de)^2 + \frac{1}{4} (1 - u(\de)^2)
\\&
= \frac{1}{4} + \frac{3}{4} u(\de)^2,
&\quad &\text{in $[\de,1]$}.
\end{alignat*}
Now suppose that $\la \in I_\de$.
Then $u(\de)^2\le1/2$,
and hence
$$
u(1)^2
\le \frac{1}{4} + \frac{3}{4} u(\de)^2
\le \frac{5}{8}
 < \frac{3}{4} u(0)^2,
$$
so $u$ cannot satisfy \eqref{ex1_bc.eq},
and hence there are no eigenvalues in $I_\de$.

Finally, a similar example with a $C^1$
coefficient function $r$ can be constructed by choosing a
suitable $C^1$ approximation of the  function $r_\de$.
\hfill$\square$
\medskip

Our second example shows that, for some $\bal\in\A_1$ and
$r \in C^1[-1,1]$, some eigenvalues may not be simple.

\medskip

\noindent{\bf Example 2.}
Consider equation \eqref{eval_de.eq} with
$$
r(x) := 2 - \cos(\tfrac{\pi}{2}x), \quad x\in[-1,1].
$$
Let $\mu^D>0$ be any eigenvalue of the Dirichlet problem
$$
 -u''=\mu r u  \quad \text{on $(0,1)$}, \quad u(0)=u(1)=0,
$$
with corresponding eigenfunction $u^D$.
Clearly, $u^D$ can be extended by antisymmetry to a
solution $u$ of \eqref{eval_de.eq} satisfying
\begin{equation}\label{ex_bc.eq}
u(\pm1)=\al u(0),
\end{equation}
for any $\al\in(-1,1)$.
On the other hand, let $v$ be the (even) solution of
the initial value problem
$$
 -v''=\mu^D r v \quad \text{on $(-1,1)$}, \quad v(0)=1, \ v'(0)=0.
$$
Setting $\al:=v(1)$ in \eqref{ex_bc.eq},
we see that $u$ and $v$ are two linearly independent
eigenfunctions of the problem \eqref{eval_de.eq}, \eqref{ex_bc.eq},
corresponding to the eigenvalue $\la=\mu^D$.
We only need to check that $|\al| = |v(1)| <1$.
The Lyapunov function associated with $v$ (defined in \eqref{lyap})
satisfies
$$
0 \le \frac{E'(x)}{E(x)} \le \frac{r'(x)}{r(x)} < r'(x) ,
\quad x \in (0,1],
$$
so by integration,
$$
v'(1)^2 + 2 \mu^D v(1)^2 = E(1) <  E(0) \int_0^1 r'  = \mu^D ,
$$
which yields
$|v(1)| < 2^{-1/2}$.
\hfill$\square$


\section{A non-resonance condition}
\label{nonres.sec}

In this section we consider the problem
\eqref{dbc.eq}, \eqref{nonlin.eq},
which we can rewrite as
\begin{equation}  \label{nonres.eq}
-\De u = f(u),
\quad  u \in X ,
\end{equation}
and we establish the existence of solutions under a `nonresonance'
condition.
We suppose that $f \in C^0([-1,1] \X \R,\R)$ satisfies the following
condition.
\begin{mylist}
\item[$(\mathrm{F}_\infty)$]
There exists $r_\infty \in C^1[-1,1]$ such that
\begin{equation*}  \label{f_lim_infty.eq}
\lim_{|\xi| \to \infty} \frac{f(x,\xi)}{\xi} = r_\infty(x)>0,
\quad \text{uniformly for} \ x \in[-1,1].
\end{equation*}
\end{mylist}
We also assume that
$|\al^\pm| <\ga(r_\infty)$,
and we define the eigenvalues
$\la_k^\infty := \la_k(r_\infty)$,
$k = 1,2,\dots.$

\begin{thm}\label{nonres.thm}
Suppose that $(\mathrm{F}_\infty)$ holds,
and $\la_k^\infty \ne 1$, for all $k \ge 1$.
Then \eqref{nonres.eq} has a solution.
\end{thm}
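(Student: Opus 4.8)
The plan is to recast \eqref{nonres.eq} as a fixed point equation in $Y = C^0[-1,1]$ and apply a Leray--Schauder degree argument, using the spectral information from Theorem~\ref{spec.thm} as encoded in Proposition~\ref{degree.prop}. Concretely, by Theorem~\ref{De_inverse.thm} the operator $\De^{-1}:Y\to Y$ is well-defined, continuous and compact (when composed with the inclusion $C^1[-1,1]\hookrightarrow C^0[-1,1]$), so \eqref{nonres.eq} is equivalent to the fixed point equation $u = -\De^{-1} f(u) =: \Phi(u)$ on $Y$, where $\Phi$ is a compact map by boundedness and continuity of the Nemitskii operator $f:Y\to Y$. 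Since $\la_k^\infty\ne 1$ for all $k$, the number $1$ is not a characteristic value of $K_{r_\infty} = -\De^{-1}(r_\infty\,\cdot\,)$, so $\deg(I_Y - K_{r_\infty},B_R,0)$ is defined (and nonzero) by Proposition~\ref{degree.prop}.

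The heart of the argument is a homotopy between $\Phi$ and the linear map $K_{r_\infty}$. I would define $H:[0,1]\times Y\to Y$ by $H(\tau,u) := -\De^{-1}\big((1-\tau) f(u) + \tau\, r_\infty u\big)$, which is compact and continuous. To apply homotopy invariance of degree I need an a~priori bound: there is $R>0$ such that $H(\tau,u)\ne u$ for all $\tau\in[0,1]$ and all $\|u\|_Y\ge R$, equivalently, the set of solutions of $-\De u = (1-\tau) f(u) + \tau r_\infty u$ over all $\tau\in[0,1]$ is bounded in $Y$. Suppose not; then there are $\tau_n\in[0,1]$ and solutions $u_n$ with $M_n := \|u_n\|_Y\to\infty$. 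Setting $v_n := u_n/M_n$, so $\|v_n\|_Y = 1$, the functions $v_n$ satisfy $-v_n'' = (1-\tau_n)\frac{f(\cdot,u_n)}{M_n} + \tau_n r_\infty v_n$. By $(\mathrm F_\infty)$, $\frac{f(x,u_n(x))}{M_n} = \frac{f(x,u_n(x))}{u_n(x)}\,v_n(x)$ wherever $u_n(x)\ne 0$, and this is bounded (the ratio $f(x,\xi)/\xi$ is bounded on $\R$ by $(\mathrm F_\infty)$ plus continuity of $f$), so the right-hand sides are bounded in $Y$; hence $v_n''$ is bounded in $Y$, $v_n$ is bounded in $C^1$, and by Arzel\`a--Ascoli (passing to a subsequence) $v_n\to v$ in $C^1[-1,1]$ with $\tau_n\to\tau$ and, after a further subsequence, $(1-\tau_n)\frac{f(\cdot,u_n)}{M_n} \to h$ weakly$^*$ or in an appropriate sense. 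The standard device here is to show $h = (1-\tau) r_\infty v$: indeed on the set where $|u_n|\to\infty$ the ratio converges to $r_\infty$, while on the complementary set $v_n\to 0$; passing to the limit in the equation gives $-v'' = \big((1-\tau)r_\infty + \tau r_\infty\big)v = r_\infty v$, and $v$ satisfies the boundary conditions \eqref{dbc.eq} by continuity, with $\|v\|_Y = 1$. Thus $1$ would be an eigenvalue of \eqref{mp_eval.eq} with weight $r_\infty$, i.e. $1 = \la_k^\infty$ for some $k$ (using $|\al^\pm| < \ga(r_\infty)$ and Theorem~\ref{spec.thm}), contradicting the hypothesis.

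Given the a~priori bound $R$, homotopy invariance yields $\deg(I_Y - \Phi, B_R, 0) = \deg(I_Y - K_{r_\infty}, B_R, 0)\ne 0$ by Proposition~\ref{degree.prop}, so $\Phi$ has a fixed point in $B_R$, which is the desired solution of \eqref{nonres.eq}. The main obstacle is the a~priori estimate: one must handle carefully the limiting behaviour of $f(\cdot,u_n)/M_n$, since $u_n$ need not tend to infinity uniformly and may have zeros, so the passage to the limit requires splitting the domain according to whether $|u_n|$ is large, together with the fact that where $|u_n|$ stays bounded one has $|f(\cdot,u_n)|$ bounded and $v_n$ small. A clean way to package this is to write $f(x,\xi) = r_\infty(x)\xi + g(x,\xi)$ with $g(x,\xi)/\xi\to 0$ as $|\xi|\to\infty$ uniformly in $x$, so that $g(\cdot,u_n)/M_n\to 0$ in $Y$ directly, and then the limit equation $-v'' = r_\infty v$ falls out immediately; I would use this reformulation to keep the argument short. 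Everything else (compactness, degree computation) is routine given the earlier results.
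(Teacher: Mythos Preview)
Your proposal is correct and follows essentially the same approach as the paper: both rewrite \eqref{nonres.eq} as a fixed point problem in $Y$, use the homotopy to $K_{r_\infty}$ (your $H$ and the paper's coincide by linearity of $\De^{-1}$), establish an a~priori bound, and conclude via homotopy invariance and Proposition~\ref{degree.prop}. The only difference is that the paper outsources the a~priori estimate to \cite[Theorem~5.3]{DR}, whereas you spell out the standard normalization--compactness argument (and your final reformulation $f=r_\infty\xi+g$ with $g(\cdot,u_n)/M_n\to 0$ in $Y$ is indeed the clean way to do it).
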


\begin{proof}
Equation \eqref{nonres.eq} is equivalent to
\begin{equation*}
u = \T(u),  \quad u \in Y,
\end{equation*}
where we define $\T:Y \to Y$ by $\T(u) := - \De^{-1}(f(u))$.
We now define a homotopy $H:[0,1]\X Y \to Y$ by
$$
H(t,u)=(1-t)\T(u) + t K_{r_\infty}(u), \quad (t,u) \in [0,1]\X Y .
$$
It follows from Theorem~\ref{De_inverse.thm} that $H$ is completely
continuous.
Furthermore, arguments similar to the proof of \cite[Theorem~5.3]{DR}
show that there exists $R>0$ such that all solutions $(t,u)\in [0,1]\X
Y$ of $u=H(t,u)$ satisfy $|u|_0<R$.
Hence, it follows from the invariance property of the degree
(see \cite[Section~13.6]{ZEI})
and Proposition~\ref{degree.prop} that
$$
\deg(I_Y-\T, B_R, 0)=\deg(I_Y-K_{r_\infty}, B_R, 0)\neq0,
$$
which proves the theorem.
\end{proof}


\section{Nodal solutions} \label{nodal.sec}

In this section we consider the following special case of
\eqref{nonres.eq},
\begin{equation}  \label{nodal.eq}
-\De u = g(u) u, \quad  u \in X ,
\end{equation}
and we establish the existence of nodal solutions of  \eqref{nodal.eq}
(that is, solutions $u$ lying in specific sets $T_k^\pm$).
We suppose throughout this section that $g\in C^1([-1,1]\X\R)$ satisfies
the
following condition.

\begin{mylist}
\item[$(\mathrm{G})$]
There exists $r_0,\ r_\infty \in C^1[-1,1]$ such that
\begin{equation*}  \label{f_lim_zero.eq}
\lim_{\xi \rightarrow 0} g(x,\xi) = r_0(x)>0 ,
\quad
\lim_{|\xi| \to \infty} g(x,\xi) = r_\infty(x)>0,
\end{equation*}
uniformly for $x \in[-1,1]$,
and
$$
 \sup |g_x| + \sup |\xi g_\xi|  <\infty.
$$
\end{mylist}
%
%
%
It follows from $(\mathrm{G})$ that the Nemitskii mapping $g:Y\to Y$
satisfies
\begin{equation}  \label{tfzero}
\lim_{|u|_0 \to 0} |g(u) - r_0 |_0 = 0 .
\end{equation}

We also assume throughout
that
$|\al^\pm| < \min \{\ga(r_0),\ga(r_\infty)\}$,
and we define the eigenvalues
$\la_k^0 := \la_k(r_0)$,
$\la_k^\infty := \la_k(r_\infty)$,
$k = 1,2,\dots.$

We will establish the existence of nodal solutions of \eqref{nodal.eq}
by studying the solution set of the bifurcation-type problem
\begin{equation}  \label{rew_bif.eq}
 - \De u = \la g(u) u, \quad  (\la,u) \in (0,\infty) \X X
\end{equation}
(solutions of \eqref{rew_bif.eq} with $\la = 1$ yield solutions
of \eqref{nodal.eq}).
Clearly, $(\la,u) = (\la,0)$ is a (trivial) solution of
\eqref{rew_bif.eq} for any $\la \in \R$,
and we let $\calS$ denote the set of non-trivial
solutions of \eqref{rew_bif.eq}.
We will use global bifurcation arguments to obtain
unbounded, closed, connected sets of solutions of \eqref{rew_bif.eq},
and then use these sets to obtain solutions of \eqref{nodal.eq}.
We first prove two preliminary lemmas.

\begin{lemma}  \label{sol_prop.lem}
For any  $\La > 0$
there exists $\ga=\ga_{g,\La}\in(0,1)$ such that,
if $\bal \in \A_\ga$ and $(\la,u) \in \calS$ with $\la \le \La$,
then $u \in T_k$ for some $k \ge 1$.
\end{lemma}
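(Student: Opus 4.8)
The plan is to reduce the nonlinear equation \eqref{rew_bif.eq} to the linear eigenvalue problem \eqref{mp_eval.eq} for a perturbed coefficient, and then invoke the nodal result Lemma~\ref{efun_in_T.lem}. The key observation is that a nontrivial solution $(\la,u)\in\calS$ with $\la\le\La$ solves $-\De u = \la\,q(x)\,u$ on $(-1,1)$, where $q(x):=g(x,u(x))$. By hypothesis $(\mathrm{G})$, both $r_0$ and $r_\infty$ are strictly positive and $g$ is continuous, so $q\in C^0[-1,1]$ with $q>0$; moreover, using $(\mathrm{G})$ (the limits at $0$ and $\infty$, together with $\sup|g_x|+\sup|\xi g_\xi|<\infty$), one gets a two-sided bound $0<q_{\min}\le q(x)\le q_{\max}$ with $q_{\min},q_{\max}$ depending only on $g$ (not on $u$), say $q_{\min}:=\min\{\inf r_0,\inf r_\infty,\ldots\}$ after accounting for the transition region. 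First I would record this uniform ellipticity bound on $q$.

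Next, the estimates of Section~\ref{soln_ests.sec} and the argument behind Lemma~\ref{single_bc_u_simple.lem} and Lemma~\ref{efun_in_T.lem} go through verbatim for the equation $-u''=\la q u$, provided $\la q$ plays the role of $\la r$. Concretely, for $\la\le\La$ and $q\le q_{\max}$ we have $\la q\le \La q_{\max}$, so there is a uniform bound $\La^*:=\La q_{\max}$ on the effective spectral parameter; but what matters for the nodal conclusion is that $\la q$ be \emph{bounded away from zero}, since Lemma~\ref{efun_in_T.lem} needs only $|\al^\pm|<a_1$ together with the qualitative structure of solutions of a second-order ODE with positive coefficient. The constant $a_1=(\rmin\cmin/\rmax\cmax)^{1/2}$ in Lemma~\ref{single_bc_u_simple.lem} is computed from $r$; replacing $r$ by the function $\la q$ (with $\la\le\La$) one obtains a corresponding constant $a_1^*=a_1^*(g,\La)>0$, uniform over all such solutions because $q_{\min},q_{\max}$ and the Lyapunov bounds $\cmin,\cmax$ only depend on $\sup|q'|/q$ — and here $|q'(x)|=|g_x(x,u)+g_\xi(x,u)u'(x)|$, which is \emph{not} obviously bounded. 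This is where the extra hypothesis $\sup|g_x|+\sup|\xi g_\xi|<\infty$ together with the a priori bound on $|u'|_0$ in terms of $|u|_0$ and $\la$ (from the Lyapunov function, cf. the Corollary after Lemma~\ref{en_est.lem}) must be used to control $|q'|_0$ uniformly. I would carry this out: bound $|u'(x)|$ via the energy estimate, then bound $|q'|_0$, hence $\cmin,\cmax$, hence $a_1^*$, all depending only on $g$ and $\La$.

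With the uniform constant $a_1^*=a_1^*(g,\La)>0$ in hand, I would set $\ga_{g,\La}:=\min\{a_1^*,\tfrac12\}\in(0,1)$ (or intersect with any of the other smallness thresholds appearing in the relevant lemmas). Then for $\bal\in\A_{\ga_{g,\La}}$ and $(\la,u)\in\calS$ with $\la\le\La$, the hypotheses of Lemma~\ref{efun_in_T.lem} applied to the equation $-u''=\la q u$ are satisfied, so $u'(-1)u'(1)\ne0$ and $u\in T_k$ for some $k\ge1$, which is the claim. The main obstacle is the middle step: showing that the Lyapunov constants $\cmin,\cmax$ — and therefore the threshold $a_1$ — can be taken uniform over the whole solution branch $\{(\la,u)\in\calS:\la\le\La\}$, which forces one to extract an a priori $C^1$ bound on $u$ (so that $q=g(\cdot,u)$ is uniformly $C^1$) from the energy identity \eqref{lyap}–\eqref{E_bnd.eq} and the structural bound in $(\mathrm{G})$; once that uniformity is established, the rest is a direct appeal to the already-proved lemmas.
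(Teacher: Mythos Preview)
Your overall strategy---freeze the coefficient $q(x):=g(x,u(x))$, view $u$ as solving a linear equation $-u''=\la q u$, and then invoke the Lyapunov-function argument behind Lemma~\ref{single_bc_u_simple.lem} and Lemma~\ref{efun_in_T.lem}---is exactly the paper's approach. The gap is in the step you yourself flag as the ``main obstacle'': you propose to bound $|q'|_0$ uniformly over $\calS\cap\{\la\le\La\}$ in order to control $c_{\min},c_{\max}$ and hence $a_1^*$. This cannot be done. The set $\calS$ contains solutions with $|u|_0$ (and hence $|u'|_0$) arbitrarily large, and at a zero $x_0$ of $u$ one has $q'(x_0)=g_x(x_0,0)+g_\xi(x_0,0)\,u'(x_0)$ with $g_\xi(x_0,0)$ in general nonzero; thus $|q'|_0\to\infty$ along the branch. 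No a~priori $C^1$ bound on $u$ is available, and the hypothesis $\sup|\xi g_\xi|<\infty$ says nothing about $g_\xi(x,\xi)$ near $\xi=0$.

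The paper sidesteps this by \emph{not} bounding $q'$. Instead it differentiates $E(x)=u'(x)^2+\la g(x,u(x))u(x)^2$ directly and groups the dangerous term as
\[
\la\,g_\xi(x,u)\,u'\,u^2=\la\,(g_\xi(x,u)\,u)\cdot(u'\,u),
\]
where $|g_\xi(x,u)\,u|\le C_2$ by the hypothesis $\sup|\xi g_\xi|<\infty$, and $\la|u'||u|=\la^{1/2}\cdot\la^{1/2}|u||u'|\le \tfrac12\la^{1/2}\big((u')^2+\la u^2\big)\le C\la^{1/2}E$. This gives $|E'|\le R(\la)E$ with $R(\la)$ depending only on $g$ and $\la^{1/2}$, hence uniformly for $\la\le\La$, after which the argument of Lemma~\ref{single_bc_u_simple.lem} yields the explicit threshold $\ga_{g,\La}=(g_{\min}/g_{\max})^{1/2}e^{-R(\La)}$. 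The point is that the grouping $g_\xi u$ (bounded) versus your implicit grouping $g_\xi u'$ (unbounded) is what makes the estimate close; once you rewrite $E'$ this way, the rest of your outline goes through.
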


\begin{proof}

The proof is similar to that of Lemma~\ref{efun_in_T.lem}, and we need
only
show that $u'(-1)u'(1)\neq0$.
For $(\la,u) \in \calS$, we define the Lyapunov function
\begin{equation*}
E(x):=u'(x)^2 + \la g(x,u(x))u(x)^2, \quad x \in[-1,1].
\end{equation*}
Then, by  \eqref{rew_bif.eq},
$$
E'(x)=\la [ g_x(x,u(x)) + g_\xi(x,u(x))u'(x) ] u(x)^2$$
and so, by $(\mathrm{G})$, there exist $C_1,C_2>0$ such that
\begin{align*}
|E'| 	& \le C_1\la u^2 + C_2 \la^{1/2} |u'| \la^{1/2} |u|
  \le C_1\la u^2 + C_2 \la^{1/2} [(u')^2 + \la u^2] \\
  & \le (C_1 + C_2 \la^{1/2})[(u')^2 + \la u^2]
  = (C_1 + C_2 \la^{1/2})[(u')^2 + g_{\min}^{-1} \la g_{\min} u^2] \\
  & \le \max\{1,g_{\min}^{-1}\}
  (C_1 + C_2 \la^{1/2})[(u')^2 + \la g_{\min} u^2] \\
  & \le \max\{1,g_{\min}^{-1}\} (C_1 + C_2 \la^{1/2}) E =: R(\la) E.
\end{align*}
It follows by integration that
$$
E(0)\e^{-R(\la)}  \le E(x) \le  E(0)\e^{R(\la)} .
$$
A similar argument to the proof of
Lemma~\ref{single_bc_u_simple.lem} now shows that if
$$
|\al^\pm|<
\ga_{g,\La}:=\left(\frac{g_{\min}}{g_{\max}}\right)^{1/2} e^{-R(\La)}
$$
then $u'(-1)u'(1)\neq0$.
\end{proof}

\begin{lemma}  \label{sturm_comp.lem}
If  $(\la,u) \in \calS$ with $u \in T_k$, for some $k \ge 1$,
then  $\la < \La(k) := g_{\min}^{-1}((k+2)\pi/2)^2$.
\end{lemma}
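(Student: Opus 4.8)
The statement to prove is Lemma~\ref{sturm_comp.lem}: any $(\la,u)\in\calS$ with $u\in T_k$ satisfies $\la<\La(k):=g_{\min}^{-1}((k+2)\pi/2)^2$. This is a Sturm-comparison argument, entirely parallel to the (invoked but not proved in the excerpt) estimate behind Lemma~\ref{s_bdd.lem}. The key point is that $u\in T_k$ forces $u'$ to have exactly $k$ zeros in $(-1,1)$, with a zero of $u$ strictly between consecutive zeros of $u'$; combined with $u'(\pm1)\ne0$, this means $u$ itself has at least $k-1$ interior zeros, and in fact the oscillation of $u$ on $[-1,1]$ is controlled from below by $k$ (there are at least $k$ ``half-oscillations'' of $u'$, equivalently at least $k-1$ nodes of $u$, with additional sign changes of $u'$ forced at the ends). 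The equation $-u''=\la g(\cdot,u)u$ can be read, along the solution $u$, as a linear Sturm--Liouville equation $-u''=q(x)u$ with coefficient $q(x)=\la g(x,u(x))\ge \la g_{\min}>0$.

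\textbf{Key steps, in order.} First I would fix $(\la,u)\in\calS$ with $u\in T_k$ and set $q(x):=\la g(x,u(x))$, noting $q\ge\la g_{\min}>0$ by hypothesis $(\mathrm{G})$, so $u$ solves the linear equation $-u''=qu$ on $(-1,1)$. Second, I would count oscillations: by the definition of $T_k$, $u'$ has exactly $k$ simple zeros in $(-1,1)$ and $u'(\pm1)\ne0$, so between (and outside) these zeros $u$ has the alternating-extremum structure of a function with at least $k-1$ interior zeros; more precisely, picking two consecutive zeros of $u'$ one gets a subinterval on which $u$ has a zero, and accounting for the behaviour near $x=\pm1$ one produces a chain of points $-1\le x_0<x_1<\dots<x_{k+1}\le 1$ that are consecutive zeros of $u$ on which $u$ does not vanish identically (the count $k+2$ endpoints, hence $k+1$ subintervals between consecutive zeros, is where the ``$k+2$'' in $\La(k)$ comes from). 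Third, I would compare with the constant-coefficient equation $-v''=\la g_{\min}v$, whose solutions $\sin((\la g_{\min})^{1/2}(x-x_0))$ have consecutive zeros at distance $\pi/(\la g_{\min})^{1/2}$. By the Sturm comparison theorem, since $q\ge\la g_{\min}$, between any two consecutive zeros of $v$ the function $u$ must vanish; turning this around, the distance between consecutive zeros of $u$ is at most $\pi/(\la g_{\min})^{1/2}$. Fourth, I would add up: the total length $2$ of $[-1,1]$ must accommodate the $k+1$ gaps between consecutive zeros $x_0<\dots<x_{k+1}$, each of length $\le\pi/(\la g_{\min})^{1/2}$, but in fact one gets a sharper bound by noting the gaps strictly cover an interval of length $<2$, yielding $(k+1)\cdot\pi/(\la g_{\min})^{1/2}> x_{k+1}-x_0$, and after absorbing the end effects (the two partial half-waves near $\pm1$, which contribute the extra ``$+1$'' so that one really uses $k+2$ quarter/half-oscillations over length $2$) one arrives at
\[
\frac{(k+2)\pi}{2}\,(\la g_{\min})^{-1/2}>1,
\]
which rearranges to $\la< g_{\min}^{-1}\big((k+2)\pi/2\big)^2=\La(k)$, as claimed.

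\textbf{Main obstacle.} The only delicate point is the bookkeeping in the oscillation count --- translating the $T_k$-conditions (a), (b), (c) into a clean lower bound on the number of nodes of $u$ on the \emph{closed} interval $[-1,1]$, and in particular justifying the precise constant $k+2$ rather than $k$ or $k+1$. The mechanism is that $u'(\pm1)\ne0$ together with the $k$ interior zeros of $u'$ forces $u$ to execute slightly more than $k+1$ half-oscillations across $[-1,1]$ (a full interior half-wave for each of the $k+1$ gaps between consecutive zeros of $u'$, plus the two boundary fragments where $u'$ does not vanish), so a strict Sturm comparison over length $2$ gives the strict inequality with the constant $(k+2)\pi/2$. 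Everything else is the textbook Sturm comparison theorem applied to $-u''=qu$ versus $-v''=\la g_{\min}v$, exactly as in the proof of Lemma~\ref{s_bdd.lem}; no new analytic input is needed.
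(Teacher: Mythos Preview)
Your underlying idea---Sturm comparison between $-u''=q(x)u$ with $q=\la g(\cdot,u)\ge \la g_{\min}$ and the constant-coefficient equation---is exactly the right one, and is what the paper uses. But your direct-counting execution has two concrete problems.

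First, the zero count is wrong. From $u\in T_k$ you cannot extract $k+2$ zeros of $u$: since $u'$ has exactly $k$ zeros in $(-1,1)$ and $u'(\pm1)\ne 0$, Rolle's theorem forces $u$ to have \emph{at most} $k+1$ zeros in $[-1,1]$. Your chain $x_0<\dots<x_{k+1}$ of $k+2$ zeros of $u$ simply does not exist in general. Second, even with a correct count, the inequality you write down goes the wrong way for your purpose: knowing that each gap between consecutive zeros of $u$ has length $<\pi/(\la g_{\min})^{1/2}$ gives an upper bound on the span $x_{k+1}-x_0$, not an upper bound on $\la$ (since $x_{k+1}-x_0$ could be small). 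The ``end effects'' paragraph does not repair this; to make the boundary fragments contribute you would need a separate Sturm argument showing $u$ must vanish within distance $\pi/(\la g_{\min})^{1/2}$ of each endpoint, which you have not supplied.

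The paper avoids all of this bookkeeping by arguing the clean contrapositive: if $\la\ge\La(k)$ then $q(x)\ge((k+2)\pi/2)^2$, and the comparison function $\sin\big(\tfrac{(k+2)\pi}{2}(x+1)\big)$ has $k+3$ zeros in $[-1,1]$; Sturm then forces $u$ to have at least $k+2$ zeros there, hence $u'$ to have at least $k+1$ zeros by Rolle, contradicting $u\in T_k$. This is a two-line argument and sidesteps precisely the ``main obstacle'' you identified.
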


\begin{proof}
The function $\sin \big( \frac{(k+2)\pi}{2} (x+1)\big)$ has
$k+3$ zeros in $[-1,1]$
so, by the Sturm comparison theorem,
if $\la \ge \La(k)$ then $u$ must have at least
$k+2$ zeros in $[-1,1]$,
and hence  $u'$ must have at least
$k+1$ zeros in $[-1,1]$,
which contradicts $u\in T_k$.
\end{proof}

We can now prove the following Rabinowitz-type
global bifurcation theorem, for individual nodal counts $k \ge 1$.

\begin{thm}  \label{rab_glob.thm}
For a given $k \ge 1$, let $\ga_k := \ga_{g,\La(k)}$
$($recall Lemmas~\ref{sol_prop.lem} and~\ref{sturm_comp.lem}$)$,
and suppose that  $\bal\in \A_{\ga_k}$.
Then there exists closed connected sets
$\C_k^\pm \subset \overline \calS$
with the properties\,$:$
\begin{mylist}
\item
$(\la_k^0,0)\in {\mathcal C_k^\pm};$
\item
${\mathcal C_k^\pm}\backslash \{(\la_k^0,0)\} \subset
(0,\La(k)] \times T_k^\pm;$
\item
${\mathcal C_k^\pm}$ is unbounded in $(0,\La(k)] \X Y$.
\end{mylist}
\end{thm}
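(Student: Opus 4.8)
The plan is to apply the classical Rabinowitz global bifurcation theorem to the equation rewritten as a fixed-point problem in $Y$, and then to use the a priori nodal and $\la$-bounds from Lemmas~\ref{sol_prop.lem} and~\ref{sturm_comp.lem} to cut out the two one-signed pieces $\C_k^\pm$. First I would rewrite \eqref{rew_bif.eq} as $u = \la K_{r_0} u + \la N(u)$, where $N(u) := -\De^{-1}\big((g(u)-r_0)u\big)$; by \eqref{tfzero}, $N$ is compact and $N(u) = o(|u|_0)$ as $|u|_0\to 0$, so this is a compact bifurcation problem whose linearisation at $u=0$ is $u = \la K_{r_0}u$. The characteristic values of $K_{r_0}$ are exactly the $\la_k^0$ obtained in Theorem~\ref{spec.thm} (applied with $r=r_0$, which is legitimate since $|\al^\pm| < \ga(r_0)$), and by Lemma~\ref{alge_mult.lem} each has algebraic multiplicity one, hence odd. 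Therefore Rabinowitz's theorem (see e.g.\ the standard references; this is the multi-point analogue of the argument in \cite{DR,RYN5}) provides a closed connected set $\C_k \subset \overline\calS$ containing $(\la_k^0,0)$ which is either unbounded in $(0,\infty)\X Y$ or meets the trivial line at another $(\la_j^0,0)$.

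Next I would run the usual nodal-separation argument. Fix $\La := \La(k)$ and $\ga_k := \ga_{g,\La(k)}$, and assume $\bal\in\A_{\ga_k}$. By Lemma~\ref{sturm_comp.lem}, every $(\la,u)\in\calS$ with $u\in T_k$ has $\la < \La(k)$; by Lemma~\ref{sol_prop.lem}, every $(\la,u)\in\calS$ with $\la\le\La(k)$ lies in $\bigcup_{j\ge 1}T_j$. Since each $T_j$ is open in $X$, $T_j^\pm$ are open and disjoint, and no element of $\partial T_j$ (other than $0$) can solve \eqref{rew_bif.eq} with $\la\le\La(k)$, a connectedness argument shows that near the bifurcation point the branch $\C_k$ lies in $\{(\la_k^0,0)\}\cup\big((0,\La(k))\X(T_k^+\cup T_k^-)\big)$: indeed local bifurcation analysis (using that eigenfunctions of the linearisation at $(\la_k^0,0)$ lie in $T_k$ by Theorem~\ref{spec.thm}) shows solutions near $(\la_k^0,0)$ bifurcate into $T_k^+$ on one side and $T_k^-$ on the other. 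One then defines $\C_k^\pm$ to be the connected component of $\C_k \cap \big(\{(\la_k^0,0)\}\cup((0,\La(k)]\X T_k^\pm)\big)$ containing $(\la_k^0,0)$; properties (i) and (ii) are then built in by construction (the estimate $\la\le\La(k)$ on the whole of $\C_k^\pm\setminus\{(\la_k^0,0)\}$ comes from Lemma~\ref{sturm_comp.lem}, since points of $\C_k^\pm\setminus\{(\la_k^0,0)\}$ lie in $T_k$).

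For property (iii), the unboundedness, I would argue that $\C_k^\pm$ cannot return to the trivial solution line at any other $\la_j^0$: a sequence $(\la_n,u_n)\in\C_k^\pm$ with $u_n\to 0$ in $Y$ must, after dividing by $|u_n|_0$, converge (using compactness of $K_{r_0}$ and \eqref{tfzero}) to an eigenfunction of $u = \la_* K_{r_0}u$ lying in $\overline{T_k^\pm}$; since the $k$-th eigenfunction is characterised by membership of $T_k$ and the eigenvalues are simple, $\la_* = \la_k^0$, so $\C_k^\pm$ meets the trivial line only at $(\la_k^0,0)$. Combined with the Rabinowitz alternative this forces $\C_k^\pm$ to be unbounded; since $\la$ is bounded by $\La(k)$ on $\C_k^\pm$, the unboundedness must occur in the $Y$-direction, i.e.\ $\C_k^\pm$ is unbounded in $(0,\La(k)]\X Y$. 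The main obstacle I anticipate is the bookkeeping needed to show the branch genuinely splits into the two one-signed pieces $T_k^\pm$ and that each piece is individually connected and unbounded --- this requires the detailed local bifurcation picture at $(\la_k^0,0)$ together with the fact, guaranteed by Theorem~\ref{spec.thm} for the relevant coefficient ranges, that solutions cannot cross from $T_k^+$ to $T_k^-$; everything else (compactness, the degree/index input via Proposition~\ref{degree.prop}, and the a priori bounds) is routine given the results already established.
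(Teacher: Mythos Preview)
Your proposal is correct and follows essentially the same route as the paper's proof: rewrite \eqref{rew_bif.eq} as a compact perturbation of the linear problem $u=\la K_{r_0}u$, invoke Rabinowitz's global bifurcation theorem at the simple characteristic value $\la_k^0$, use Lemmas~\ref{sol_prop.lem} and~\ref{sturm_comp.lem} to confine the branches to $(0,\La(k)]\times T_k^\pm$, and then rule out the ``return to another eigenvalue'' alternative via the standard normalisation argument to conclude unboundedness. The paper's proof is terser (it defers most of the nodal-splitting and alternative-exclusion details to \cite{RAB}), but the strategy and the ingredients used are the same as yours.
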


\begin{proof}
By Theorem~\ref{De_inverse.thm}, equation \eqref{rew_bif.eq}
is equivalent to the equation
\begin{equation}  \label{invertbif.eq}
 u = G(\la,u),  \quad (\la,u) \in (0,\infty) \X Y ,
\end{equation}
where
$$
G(\la,u) := -\la \De^{-1}(g(u) u) ,  \quad (\la,u) \in (0,\infty) \X Y ,
$$
and by  \eqref{tfzero},
$$
|G(\la,u) - \la K_{r_0} u|_0 = {\rm o}(|u|_0) ,
\quad \text{as $|u|_0 \to 0$,}
$$
uniformly on bounded $\la$ intervals.
This shows that equation \eqref{invertbif.eq} has the same
form as the bifurcation problems considered in \cite{RAB}.
Hence, by the methods in \cite{RAB}
(in particular, the proof of
\cite[Theorem 2.3]{RAB}, which deals with separated Sturm-Liouville
problems),
we can show that:
\begin{itemize}
\item
there exist closed, connected sets $\C_k^\pm \subset \overline\calS$
satisfying  (a) and (b)
in a neighbourhood of $(\la_k^0,0)$,
and the following alternatives for each $\nu\in \{\pm\}$:
(i)~$(\la_{k'}^0,0) \in \C_k^\nu$, with $k' \ne k$,
or
(ii)~$\C_k^\nu$ is unbounded in $(0,\infty) \X Y$;
\item
by Lemmas~\ref{sol_prop.lem} and~\ref{sturm_comp.lem},
the sets $\C_k^\pm$ cannot intersect either
$(0,\La(k)] \times \pa T_k^\pm$ or $\{\La(k)\} \times T_k^\pm$,
so (b) must hold globally;
\item
since (b) holds, it now follows,
as in the proof of \cite[Theorem 2.3]{RAB},
that alternative (i) above cannot hold,
which proves (c).
\end{itemize}
The proof is complete.
\end{proof}

\begin{remark}\label{technical.rem}
For fixed $\bal\neq\bzero$,
Theorem~\ref{rab_glob.thm} does not yield unbounded
continua $\C_k^\pm$ for arbitrarily large $k$,
since $\lim_{k \to\infty} \ga_k = 0$.
This is due to the dependence of $\ga_{g,\La}$ on $\La$ in
Lemma~\ref{sol_prop.lem}.
Also, the proof of this lemma is where we used the assumptions in
condition $(\mathrm{G})$ on the derivatives of $g$.
By a more detailed argument we could remove these assumptions and
obtain Lemma~\ref{sol_prop.lem} for (suitably bounded) $C^0$ functions
$g$.
However, it is not clear that the dependence on $\La$ can be removed,
or what would be the most general result, so for simplicity
we have retained condition $(\mathrm{G})$ and the above method of proof
of Lemma~\ref{sol_prop.lem}, which fits in with our previous arguments
using the Lyapunov functions.
\end{remark}

We now prove the existence of nodal solutions for \eqref{nodal.eq}.

\begin{thm} \label{nodal.thm}
Suppose that
$(\la_k^0 - 1) (\la_k^\infty - 1) < 0$
and $\bal \in \A_{\ga_k}$, for some $k \ge1$.
Then \eqref{nodal.eq} has solutions $u_k^\pm \in T_k^\pm$.
\end{thm}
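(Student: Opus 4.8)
The plan is to obtain $u_k^\pm$ from the global continua of Theorem~\ref{rab_glob.thm}, using the asymptotic behaviour of $g$ at infinity to force each continuum to pass through the value $\la = 1$. Fix $\nu \in \{\pm\}$. Since $\bal \in \A_{\ga_k}$, Theorem~\ref{rab_glob.thm} supplies a closed connected set $\C_k^\nu \subset \overline{\calS}$ with $(\la_k^0,0) \in \C_k^\nu$, with $\C_k^\nu \setminus \{(\la_k^0,0)\} \subset (0,\La(k)] \X T_k^\nu$, and with $\C_k^\nu$ unbounded in $(0,\La(k)] \X Y$. As the $\la$-coordinate of any point of $\C_k^\nu$ lies in the bounded set $(0,\La(k)]$, this unboundedness yields a sequence $(\la_n,u_n) \in \C_k^\nu$ with $|u_n|_0 \to \infty$; after passing to a subsequence, $\la_n \to \la_* \in [0,\La(k)]$.

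Next I would run the standard blow-up at infinity. Put $v_n := u_n/|u_n|_0$, so that $|v_n|_0 = 1$ and, by \eqref{rew_bif.eq}, $v_n = -\la_n \De^{-1}(g(u_n)v_n)$. Condition $(\mathrm{G})$ makes the Nemitskii map $g$ bounded on $Y$, so $(g(u_n)v_n)$ is bounded in $Y$, and by the compactness of $\De^{-1}$ (Theorem~\ref{De_inverse.thm}) a further subsequence of $(v_n)$ converges in $C^1[-1,1]$ to some $v$ with $|v|_0 = 1$. Using the uniform limit $g(x,\xi) \to r_\infty(x)$ as $|\xi| \to \infty$ together with $|u_n|_0 \to \infty$, one checks that $g(u_n)v_n \to r_\infty v$ in $Y$: at points where $v \ne 0$ the argument $u_n$ of $g$ diverges, while at points where $v$ is small the factor $v_n$ keeps the product small. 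Passing to the limit in the fixed-point equation gives $v = \la_* K_{r_\infty} v$, so $v$ is a nontrivial solution of $-\De v = \la_* r_\infty v$; in particular $\la_* > 0$. Since $|\al^\pm|$ is small, Lemma~\ref{efun_in_T.lem} (applied with coefficient $r_\infty$) puts $v \in T_{k'}$ for some $k' \ge 1$; because $v_n \in T_k^\nu$ and $v_n \to v$ in $C^1$, the nodal structure passes to the limit, forcing $v \in T_k^\nu$ and $k' = k$. The uniqueness statement in Theorem~\ref{spec.thm} then identifies $\la_* = \la_k(r_\infty) = \la_k^\infty$.

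It remains a connectedness argument. The projection $P_\nu := \{\la : (\la,u) \in \C_k^\nu \text{ for some } u\}$ is a connected subset of $\R$, hence an interval; its closure contains $\la_k^0$ (since $(\la_k^0,0) \in \C_k^\nu$) and $\la_k^\infty$ (since $\la_n \in P_\nu$ and $\la_n \to \la_k^\infty$), so $P_\nu$ contains the whole open interval with endpoints $\la_k^0$ and $\la_k^\infty$. By hypothesis $(\la_k^0 - 1)(\la_k^\infty - 1) < 0$, the value $1$ lies strictly in this open interval, so $1 \in P_\nu$: there is $(1,u_k^\nu) \in \C_k^\nu$. Since $1 \ne \la_k^0$, the point $(1,u_k^\nu) \ne (\la_k^0,0)$, whence $u_k^\nu \in T_k^\nu$ by property (b) of Theorem~\ref{rab_glob.thm}; and $(1,u_k^\nu)$ being a solution of \eqref{rew_bif.eq} at $\la = 1$ is exactly a solution of \eqref{nodal.eq}. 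Carrying this out for $\nu = +$ and $\nu = -$ produces $u_k^\pm \in T_k^\pm$.

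The main obstacle is the second step: the asymptotic (blow-up) analysis at infinity, namely that a sequence on an unbounded branch, after rescaling, converges to an eigenfunction of the linearisation at infinity lying in the correct nodal set $T_k^\nu$ — so that its eigenvalue is precisely $\la_k^\infty$. This is where condition $(\mathrm{G})$ is really used (boundedness of $g$, and the uniform limit $g \to r_\infty$), together with the uniqueness part of Theorem~\ref{spec.thm}. The delicate point is controlling $g(u_n)v_n$ near the zeros of the limit $v$, where $u_n$ need not diverge and one cannot directly invoke $g(u_n) \to r_\infty$.
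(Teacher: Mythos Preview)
Your proposal is correct and follows essentially the same route as the paper: the paper's proof is a three-line version of exactly what you wrote, citing ``standard arguments (see \cite[Theorem~5.3]{DR})'' for the blow-up step that you spell out, and then invoking connectedness of $\C_k^\pm$ to cross the hyperplane $\{1\} \times Y$. Your worry about controlling $g(u_n)v_n$ near the zeros of $v$ is handled by the routine $\ep$-splitting you sketched (bounded $g$ on $\{|v|<\ep\}$, uniform divergence of $|u_n|$ on $\{|v|\ge\ep\}$), and the nodal passage to the limit is safe because once $g(u_n)v_n \to r_\infty v$ in $Y$ the equation upgrades $v_n \to v$ to $C^2$, where the $T_k^\nu$ are open.
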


\begin{proof}
By Theorem~\ref{rab_glob.thm}~(c) and standard arguments
(see, for example, the proof of \cite[Theorem~5.3]{DR}),
we may choose sequences
$\{(\mu_n^\pm,u_n^\pm)\} \subset \C_k^\pm$
such that
$\mu_n^\pm \to \la_k^\infty$ and $|u_n^\pm|_0\to\infty$, as
$n \to\infty$.
Hence, by connectedness, the sets $\C_k^\pm$ intersect the hyperplane
$\{1\} \X Y$,
which proves Theorem~\ref{nodal.thm}.
\end{proof}

Finally, we briefly consider the special case where $g$ has the form
\eqref{product_form.eq}
with $r \in C^1[-1,1], \ r > 0$,
and $\tg\in C^1(\R)$.
The hypothesis $(\mathrm{G})$ is satisfied if
$|\tg'(\xi)\xi|$ is bounded and the limits
\begin{equation*}  \label{g_lim.eq}
\tg_0:=\lim_{\xi \rightarrow 0} \tg(\xi) > 0,
\quad
\tg_\infty:=\lim_{|\xi| \rightarrow \infty} \tg(\xi) > 0
\end{equation*}
exist.
For $k \ge 1$, if we write $\la_k:=\la_k(r)$ then
$\la_k^0 = \la_k/\tg_0$,
$\la_k^\infty = \la_k/\tg_\infty$,
and Theorem~\ref{nodal.thm} now takes the following form.

\begin{thm}\label{nodal2.thm}
Suppose that
$(\la_k-\tg_0)(\la_k-\tg_\infty)<0$
and $\bal \in \A_{\ga_k}$, for some $k \ge1$.
Then \eqref{nodal.eq} has  solutions
$u_k^\pm \in T_k^\pm$.
\end{thm}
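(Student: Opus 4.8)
The plan is to deduce this statement as an immediate corollary of Theorem~\ref{nodal.thm} by identifying the relevant eigenvalue parameters in the product case $g(x,u)=r(x)\tg(u)$. First I would verify that the hypothesis $(\mathrm{G})$ holds under the stated assumptions on $\tg$: with $g(x,\xi)=r(x)\tg(\xi)$ we have $g_x = r'(x)\tg(\xi)$ and $\xi g_\xi = r(x)\,\xi\tg'(\xi)$, so boundedness of $|\tg(\xi)|$ near $0$ and $\infty$ together with boundedness of $|\xi\tg'(\xi)|$, plus $r\in C^1$, gives $\sup|g_x|+\sup|\xi g_\xi|<\infty$; and the limits $r_0(x):=r(x)\tg_0$, $r_\infty(x):=r(x)\tg_\infty$ are in $C^1[-1,1]$ and are strictly positive since $r>0$ and $\tg_0,\tg_\infty>0$. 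Thus $(\mathrm{G})$ is satisfied with these choices of $r_0$ and $r_\infty$, as already noted in the text preceding the statement.

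The second step is to compute the eigenvalues $\la_k^0=\la_k(r_0)$ and $\la_k^\infty=\la_k(r_\infty)$ in terms of $\la_k=\la_k(r)$. Since the differential equation $-u''=\la\,(r\tg_0)\,u$ is identical to $-u''=(\la\tg_0)\,r\,u$ and the boundary conditions \eqref{dbc.eq} do not involve $r$, a number $\mu$ is an eigenvalue of the problem with coefficient $r$ if and only if $\mu/\tg_0$ is an eigenvalue of the problem with coefficient $r\tg_0$; moreover this rescaling preserves the nodal structure (the eigenfunctions are literally the same functions), so it respects the labelling by the sets $T_k^\pm$. Hence $\la_k(r\tg_0)=\la_k(r)/\tg_0$, i.e. $\la_k^0=\la_k/\tg_0$, and similarly $\la_k^\infty=\la_k/\tg_\infty$. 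One should also observe that the smallness condition $|\al^\pm|<\min\{\ga(r_0),\ga(r_\infty)\}$ required in Section~\ref{nodal.sec} is subsumed by $\bal\in\A_{\ga_k}$, since $\ga_k=\ga_{g,\La(k)}$ was chosen (in Lemma~\ref{sol_prop.lem}) precisely to make all the preceding arguments go through; no separate check is needed beyond what Theorem~\ref{nodal.thm} already assumes.

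The final step is to translate the sign condition. We have
$$
(\la_k^0-1)(\la_k^\infty-1)=\Big(\frac{\la_k}{\tg_0}-1\Big)\Big(\frac{\la_k}{\tg_\infty}-1\Big)=\frac{(\la_k-\tg_0)(\la_k-\tg_\infty)}{\tg_0\tg_\infty},
$$
and since $\tg_0\tg_\infty>0$ the condition $(\la_k^0-1)(\la_k^\infty-1)<0$ is equivalent to $(\la_k-\tg_0)(\la_k-\tg_\infty)<0$. Therefore the hypotheses of Theorem~\ref{nodal.thm} hold, and it yields solutions $u_k^\pm\in T_k^\pm$ of \eqref{nodal.eq}, which is exactly the assertion. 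There is no real obstacle here; the only point requiring a sentence of care is the claim $\la_k(r\tg_0)=\la_k(r)/\tg_0$, which must invoke the fact that the scaling does not disturb the $T_k^\pm$-classification used to index the eigenvalues, but this is transparent since the eigenfunctions are unchanged by the scaling.
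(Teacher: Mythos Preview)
Your proposal is correct and follows exactly the paper's approach: the paper does not give a separate proof of this theorem but simply notes, in the paragraph preceding it, that in the product case $(\mathrm{G})$ holds, that $\la_k^0=\la_k/\tg_0$ and $\la_k^\infty=\la_k/\tg_\infty$, and then states the theorem as the form Theorem~\ref{nodal.thm} takes under these identifications. Your write-up spells out the same reduction with a bit more detail (the scaling argument for the eigenvalues and the algebraic translation of the sign condition), which is fine; the only slightly imprecise remark is that $\bal\in\A_{\ga_k}$ ``subsumes'' the condition $|\al^\pm|<\min\{\ga(r_0),\ga(r_\infty)\}$ --- in the paper the latter is a standing assumption for the whole section rather than a consequence of the former, but this does not affect the argument.
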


The condition
$(\la_k-\tg_0)(\la_k-\tg_\infty)<0$
in Theorem~\ref{nodal.thm} says that the range of $\tg$ `crosses' the
eigenvalue $\la_k$.
The use of such crossing conditions to obtain solutions is well
known in many settings.

Similar results to Theorem~\ref{nodal2.thm} have been obtained recently
in, for example, \cite{CKK,KKW2} and references cited therein.
Roughly speaking, these papers deal with problems of the form
\eqref{nodal.eq} with boundary conditions of the form
\begin{equation}  \label{half_sep_bc.eq}
c_0 u(-1) + c_1 u'(-1) = 0,  \quad
u(1) = \sum^{m^+}_{i=1} \al^+_i u(\eta^+_i) ,
\end{equation}
with constants $c_0,\,c_1$ satisfying $|c_0| + |c_1| > 0$,
that is,  a single, separated boundary condition at one end-point, and a
multi-point condition at the other end-point.
They also assume that $g$ has the form \eqref{product_form.eq}
(or a sum of such terms --- to simplify the discussion we merely
consider the form \eqref{product_form.eq} here).
Although our results, as stated, do not cover the  multi-point boundary
conditions \eqref{half_sep_bc.eq}, it is trivial to extend them to
do so.
In particular, the analogue of Theorem~\ref{spec.thm} holds here.
The proof is by a similar (but simpler) continuation argument in which
the function $w$ can now be specified to satisfy
the boundary condition at $x = -1$ for all $\la$ and $\bal$
(so there is now no analogue of the variable $\th$),
and then the boundary condition at $x = 1$ leads to a single equation to
solve for $\la(\bal)$
(instead of a pair of equations).
This considerably simplifies the construction, and yields a set of
eigenvalues $\la_k(r)$, $k \ge 1$, of
\eqref{eval_de.eq}, \eqref{half_sep_bc.eq},
for all $\bal$ in a suitable set $\A_{\ga(r)}$.
Once the eigenvalues have been obtained the discussion of
the nonlinear problem
\eqref{nodal.eq}, \eqref{half_sep_bc.eq}
proceeds as above and, in particular, yields the
analogue of Theorem~\ref{nodal2.thm} for this problem.

The main results and hypotheses in \cite{CKK,KKW2} for the problem
\eqref{nodal.eq}, \eqref{half_sep_bc.eq}
can be summarised as follows
(for brevity we omit various detailed conditions;
in particular, it is hard to describe the conditions on the size of
$|\bal|$ in \cite{CKK,KKW2}, or to compare them with our conditions).
Letting $\mu_k$, $k \ge 1$, denote the eigenvalues of \eqref{eval_de.eq}
together with the separated boundary conditions
\begin{equation}  \label{full_sep_bc.eq}
c_0 u(-1) + c_1 u'(-1) = 0,  \quad u'(1) = 0 ,
\end{equation}
they obtain nodal solutions $u_k^\pm\in T_k^\pm$
of
\eqref{nodal.eq}, \eqref{half_sep_bc.eq}
if
$\tg_0 < \mu_k < \mu_{k+1} < \tg_\infty$
or
$\tg_\infty < \mu_k < \mu_{k+1} < \tg_0$,
that is,
if $\tg$ `crosses' the interval
$[\mu_k,\mu_{k+1}]$.
In other words, the range of $\tg$ is compared
with the eigenvalues of the linear problem obtained by replacing the
original multi-point boundary conditions \eqref{half_sep_bc.eq} with the
separated boundary conditions \eqref{full_sep_bc.eq}.
Heuristically, one could say that using the eigenvalues $\mu_k$
corresponding to the changed boundary conditions leads to the necessity
for the function $\tg$ to cross the interval $[\mu_k,\mu_{k+1}]$, rather
than crossing the single eigenvalue $\la_k$ obtained from the original
boundary conditions.
The following lemma shows that
if $\tg$ crosses $[\mu_k,\mu_{k+1}]$ then
it crosses $\la_k$, but obviously the converse need not hold.

\begin{lemma}
If $\bal \in \A_{\ga(r)}$ then, for each $k \ge 1$,
$\mu_k < \la_k(\bal) < \mu_{k+1}$.
\end{lemma}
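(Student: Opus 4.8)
The plan is to argue by continuation in $\bal$ from $\bal=\bzero$, exactly as in the rest of the paper, using continuity of the eigenvalues together with the nodal structure of the eigenfunctions to prevent $\la_k(\bal)$ from ever meeting a separated eigenvalue $\mu_j$.

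First, consider the base point $\bal=\bzero$. Then the multi-point condition at $x=1$ in \eqref{half_sep_bc.eq} reduces to the separated Dirichlet condition $u(1)=0$, so $\la_k(\bzero)=\nu_k$, where $\nu_k$ denotes the $k$th eigenvalue of \eqref{eval_de.eq} together with the left-hand condition in \eqref{full_sep_bc.eq} and the condition $u(1)=0$. For separated Sturm--Liouville problems it is classical (and immediate either from a modified Pr\"ufer-angle computation or from Sturm comparison) that replacing the Dirichlet condition $u(1)=0$ by the Neumann condition $u'(1)=0$ interlaces the spectra, i.e. $\mu_k<\nu_k<\mu_{k+1}$ for all $k\ge1$. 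Thus the assertion of the lemma holds when $\bal=\bzero$.

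Next, I claim that $\la_k(\bal)\ne\mu_j$ for every $j\ge1$ and every $\bal\in\A_{\ga(r)}$. Suppose, on the contrary, that $\la_k(\bal)=\mu_j$ for some such $\bal$ and $j$, and let $u_k$ be the corresponding eigenfunction of \eqref{eval_de.eq}, \eqref{half_sep_bc.eq}. Then $u_k$ solves $-u_k''=\mu_j r u_k$ on $(-1,1)$ and satisfies the left-hand condition in \eqref{half_sep_bc.eq}; the solutions of that differential equation satisfying this single one-point condition form a one-dimensional space, which (since $\mu_j$ is a Neumann eigenvalue) also contains the $j$th eigenfunction of \eqref{eval_de.eq}, \eqref{full_sep_bc.eq}, a function whose derivative vanishes at $x=1$. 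Hence $u_k$ is a scalar multiple of that function, so $u_k'(1)=0$. On the other hand, by the analogue of Theorem~\ref{spec.thm} for the boundary conditions \eqref{half_sep_bc.eq}, $u_k$ lies in some nodal set $T_k^\pm$, and by the analogue of Lemma~\ref{single_bc_u_simple.lem} applied at the multi-point endpoint $x=1$ (as in the proof of Lemma~\ref{efun_in_T.lem}) we have $u_k'(1)\ne0$. This contradiction proves the claim.

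Finally, the set $\A_{\ga(r)}$ is convex, hence connected, and by the analogue of Corollary~\ref{cts_evals.cor} the map $\bal\mapsto\la_k(\bal)$ is continuous on $\A_{\ga(r)}$. Its image is therefore a connected subset of $(0,\infty)$ which contains $\nu_k\in(\mu_k,\mu_{k+1})$ by the first step, and which is disjoint from $\{\mu_j:j\ge1\}$, in particular from $\{\mu_k,\mu_{k+1}\}$, by the second step. A connected subset of $\R$ that meets the open interval $(\mu_k,\mu_{k+1})$ but contains neither of its endpoints is contained in $(\mu_k,\mu_{k+1})$; hence $\mu_k<\la_k(\bal)<\mu_{k+1}$ for all $\bal\in\A_{\ga(r)}$, as required. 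The only delicate ingredient is the second step, and there the crucial fact is that the eigenfunction attached to $\la_k(\bal)$ satisfies $u_k'(1)\ne0$ — the half-separated analogue of the simplicity lemma at the multi-point end — which is built into the construction of $\la_k(\bal)$ outlined in the text; everything else is classical separated Sturm--Liouville theory or soft topology. A more computational alternative, avoiding the continuation, runs the modified Pr\"ufer angle $\phi(x,\la)$ of the solution of \eqref{eval_de.eq} satisfying the left-hand condition in \eqref{half_sep_bc.eq}: $\phi(1,\cdot)$ is strictly increasing in $\la$, the value $\phi(1,\mu_j)$ is the $j$th value at which $\cos\phi(1,\cdot)=0$, and $u_k\in T_k^\pm$ with $u_k'(\pm1)\ne0$ forces $\phi(1,\la_k(\bal))$ into the open interval $\big(\phi(1,\mu_k),\phi(1,\mu_{k+1})\big)$ of length $\pi$, whence the result by monotonicity.
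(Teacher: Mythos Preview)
Your proof is correct and follows essentially the same route as the paper: establish the inequality at $\bal=\bzero$ via Sturm comparison/interlacing, show that $\la_k(\bal)$ can never equal a separated Neumann eigenvalue because the shared left-hand condition would force $u_k'(1)=0$ in contradiction to the analogue of Lemma~\ref{efun_in_T.lem}, and then conclude by continuity. Your version is slightly cleaner in that you rule out $\la_k(\bal)=\mu_j$ for \emph{all} $j$ at once rather than just $j=k,k+1$, and the Pr\"ufer-angle alternative you sketch at the end is a nice addition not present in the paper.
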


\begin{proof}
We denote by $u_k(\bal)$ the eigenfunctions of problem
\eqref{eval_de.eq}, \eqref{half_sep_bc.eq} obtained by adapting
our arguments in Section~\ref{evals.sec}, as outlined above.
First, at $\bal=\bzero$, we apply Sturm's comparison theorem
to the problems \eqref{eval_de.eq}, \eqref{half_sep_bc.eq}
and
\eqref{eval_de.eq}, \eqref{full_sep_bc.eq}. It follows that
$\mu_k < \la_k(\bzero) < \mu_{k+1}$, for all $k\ge1$.
Now suppose that $\la_k(\bal)=\mu_k$, for some $k \ge 1$ and
$\bal \in \A_{\ga(r)}$,
and let $v_k$ be an eigenfunction of
\eqref{eval_de.eq}, \eqref{full_sep_bc.eq},
corresponding to the eigenvalue $\mu_k$.
Since $u_k(\bal)$ and $v_k$ each satisfy the
boundary condition in \eqref{full_sep_bc.eq} at $x=-1$,
by a rescaling we may suppose that $u_k(\bal) = v_k$.
But the analogue of Lemma~\ref{efun_in_T.lem} for this setting
shows that $v_k'(1) \ne 0$,
which contradicts $v_k$ being an eigenfunction of
\eqref{eval_de.eq}, \eqref{full_sep_bc.eq}.
Therefore, for all $\bal \in \A_{\ga(r)}$ and $k\ge1$,
$\la_k(\bal)\neq\mu_k$
and a similar argument shows that $\la_k(\bal)\neq\mu_{k+1}$.
Hence, by continuation,  $\mu_k<\la_k(\bal)<\mu_{k+1}$
for all $\bal \in \A_{\ga(r)}$, $k\ge1$.
\end{proof}

We conclude that the crossing condition in \cite{CKK,KKW2} is more
restrictive than that in  Theorem~\ref{nodal2.thm} above.
In addition, we have obtained results for the boundary conditions
\eqref{dbc.eq}, having multi-point conditions at both end-points.
It is much easier to deal with the conditions \eqref{half_sep_bc.eq}
than with \eqref{dbc.eq}
since shooting methods can be used, as in \cite{CKK,KKW2}
(shooting from $x=-1$, using the separated boundary condition to
provide an `initial condition';
this is not possible with multi-point conditions at both ends).
Finally, as explained in the introduction, it is straightforward to
extend our methods to the case of integral boundary conditions at both
end-points.

\end{document}